\documentclass[12pt]{article}

\usepackage{graphicx} 
\usepackage{geometry} 
\usepackage{mathtools,amsmath,amssymb}
\usepackage{amsthm}
\usepackage[ruled, linesnumbered]{algorithm2e}
\usepackage{dsfont}
\usepackage{microtype}
\usepackage{enumerate}
\usepackage{subcaption}
\usepackage{nicefrac}
\usepackage{xcolor}

\definecolor{color1}{HTML}{d10050}
\definecolor{color2}{HTML}{d1ab00}
\definecolor{color3}{HTML}{69d100}
\definecolor{color4}{HTML}{00d176}
\definecolor{color5}{HTML}{00aed1}
\definecolor{color6}{HTML}{000ed1}
\definecolor{color7}{HTML}{8400d1}

\definecolor{cbcolor1}{HTML}{D81B60}
\definecolor{cbcolor2}{HTML}{1E88E5}
\definecolor{cbcolor3}{HTML}{FFC107}
\definecolor{cbcolor4}{HTML}{004D40}

\usepackage[colorlinks,
linkcolor=color6,        
citecolor=color6,         
filecolor=color6,     
urlcolor=color6,
breaklinks=true]{hyperref}
\usepackage[noabbrev,capitalise,english]{cleveref}
\crefname{conjecture}{conjecture}{conjectures}
\usepackage{todonotes}
\usepackage{array}
\usepackage{multirow}
\usepackage{bm}

\geometry{margin=1in} 


\newtheorem{observation}{Observation}
\newtheorem{conjecture}{Conjecture}
\newtheorem{theorem}{Theorem}
\newtheorem{definition}{Definition}
\newtheorem{lemma}{Lemma}
\newtheorem{corollary}{Corollary}

\theoremstyle{remark}
\newtheorem{example}{Example}
\newtheorem*{remark}{Remark}
\newtheorem*{claim}{Claim}

\title{Integer and Unsplittable Multiflows\\ in Series-Parallel Digraphs%
\thanks{Funded by the Deutsche Forschungsgemeinschaft (DFG, German Research Foundation) under Germany's Excellence Strategy --- The Berlin Mathematics Research Center MATH+ (EXC-2046/1, project ID: 390685689).}
}
\author{
    Mohammed Majthoub Almoghrabi\\
    Technische Universität Berlin, Germany\\
    \texttt{majthoub@math.tu-berlin.de}
    \and
    Martin Skutella\\
    Technische Universität Berlin, Germany\\
    \texttt{skutella@math.tu-berlin.de}
    \and
    Philipp Warode\\
    Humboldt-Universität zu Berlin, Germany\\
    \texttt{philipp.warode@hu-berlin.de}
}
\date{\today}

\begin{document}
\maketitle

\begin{abstract}
An unsplittable multiflow routes the demand of each commodity along a single path from its source to its sink node. As our main result, we prove that in series-parallel digraphs, any given multiflow can be expressed as a convex combination of unsplittable multiflows, where the total flow on any arc deviates from the given flow by less than the maximum demand of any commodity.
This result confirms a 25-year-old conjecture by Goemans for single-source unsplittable flows, as well as a stronger recent conjecture by Morell and Skutella, for series-parallel digraphs---even for general multiflow instances where commodities have distinct source and sink nodes.
Previously, no non-trivial class of digraphs was known for which either conjecture holds.
En route to proving this result, we also establish strong integrality results for multiflows on series-parallel digraphs, showing that their computation can be reduced to a simple single-commodity network flow problem.
\end{abstract}

\usetikzlibrary{decorations.pathreplacing,decorations.shapes,decorations.pathmorphing,shapes}
\tikzstyle{CircleLight}=[draw, thick, circle, inner sep=0pt, minimum size = 3ex, fill=gray!15]
\tikzstyle{CircleLightSmall}=[draw, thick, circle, inner sep=0pt, minimum size = 2ex, fill=gray!15]
\tikzstyle{CircleDark}=[draw, thick, circle, inner sep=0pt, minimum size = 3ex, fill=gray!15]
\tikzstyle{PNode}=[draw, thick, circle, inner sep=0pt, minimum size = 3.5ex, fill=cbcolor1!65]
\tikzstyle{SNode}=[draw, thick, circle, inner sep=0pt, minimum size = 3.5ex, fill=cbcolor2!65]
\tikzstyle{QNode}=[draw, thick, circle, inner sep=0pt, minimum size = 3.5ex, fill=gray!15]

\newcommand{\colorbrace}[3]{\textcolor{#1}{\underbrace{\color{black}#2}_{#3}}}
\renewcommand{\vec}[1]{\boldsymbol{\mathbf{#1}}}
\newcommand{\R}{\mathbb{R}}
\newcommand{\N}{\mathbb{N}}
\newcommand{\Z}{\mathbb{Z}}
\newcommand{\lexle}{\prec_{\text{LEX}}}
\newcommand{\lexleq}{\preceq_{\text{LEX}}}
\newcommand{\indicator}{\mathds{1}}
\newcommand{\spnode}{\omega}
\newcommand{\yshare}{s}
\newcommand{\mcflow}{X}
\newcommand{\totalflow}{x}
\newcommand{\componentflow}[1][x]{\bar{#1}}
\newcommand{\innerV}{V^{\circ}}
\newcommand{\fractionalDemands}{I}
\newcommand{\unsplittableDemands}{\bar{I}}
\newcommand{\demandSet}{J}
\newcommand{\ileft}[1][\spnode]{p^{#1}}
\newcommand{\iright}[1][\spnode]{q^{#1}}
\newcommand{\imiddle}[1][\spnode]{r^{#1}}

\newcommand{\flowgroup}{K}
\newcommand{\numflows}{L}

\newcommand{\smax}{\mathop{\mathrm{max}_2}}

\newcommand{\argminA}{\mathop{\mathrm{arg\,min}}}


\setcounter{page}{0}
\newpage
\section{Introduction}

Network flow theory focuses on the optimal routing of commodities through networks and is fundamental to a wide array of applications in fields such as logistics, telecommunications, and computer networking; see, e.g.,~\cite{AhujaMagOrl93,Williamson-book2019}. In many scenarios, splitting a commodity might degrade the quality of service, increase complexity, or is simply infeasible. For example, in optical networks, dividing data streams across multiple paths requires specialized equipment. Similarly, in freight logistics, splitting loads across multiple routes is often impractical.

This motivates \emph{unsplittable} multiflows. Here, for a given digraph with~$k$ source-sink pairs, the given demand~$d_i$ of every commodity~$i\in[k]\coloneqq\{1,\dots,k\}$ must be sent along a single path~$P_i$ from its source~$s_i$ to its sink~$t_i$. To differentiate unsplittable flows from traditional flows, where the demand of each commodity may be divided and sent along multiple $s_i$--$t_i$~paths, we refer to the latter as \emph{fractional} flows.

Unsplittable flow problems represent a compelling extension of disjoint path problems and have received considerable attention in the literature. Even in the apparently straightforward case of a (series-parallel) digraph consisting of one common source and one common sink connected by parallel arcs, determining the existence of a \emph{feasible} unsplittable multiflow (i.e., one that adheres to given arc capacities) is NP-complete. In fact, several classical problems in combinatorial optimization, such as, e.g., Bin Packing, Partition, or parallel machine scheduling with makespan objective occur as special cases; for more details we refer to Kleinberg's PhD thesis~\cite{Kleinberg-Diss96}.

\paragraph*{Integer multiflows.}
When every commodity~$i\in[k]$ has unit demand~$d_i\coloneqq1$, unsplittable multiflows precisely correspond to integer multiflows, which have been extensively studied in the literature. In fact, in unit-capacity networks, feasible integer multiflows model arc-disjoint paths, and therefore cannot be computed efficiently, unless P=NP. For example, Vygen~\cite{Vygen1995} shows that the arc-disjoint paths problems is NP-complete even in acyclic and planar digraphs; see~\cite[Chapter 70.13b+c]{Schrijver03}  for further related complexity results. On the other hand, single-commodity network flows (i.e., $k=1$) are well-known to feature strong integrality properties, and there are efficient combinatorial algorithms to compute integer single-commodity flows; see, e.g.,~\cite{AhujaMagOrl93,Williamson-book2019}. In particular, the network flow polytope is integral, that is, any single-commodity flow~$(x_e)_{e\in E}$ on a digraph~$G=(V,E)$ with integer excess at every node, is a convex combination of integer flows~$(y_e)_{e\in E}$ that maintain the same excesses, such that
\begin{align}
\label{eq:rounding-bounds}
\lfloor x_e\rfloor\leq y_e\leq\lceil x_e\rceil\qquad\text{for every arc~$e\in E$.}
\end{align}

Arguably, the most important tool for proving the existence of integer multiflows in special graph classes is the Nagamochi--Ibaraki Theorem~\cite{NagamochiIbaraki1989} (see also~\cite[Theorem~70.8]{Schrijver03}), which relies on the sufficiency of the \emph{cut condition}. This (necessary) condition requires that the capacity of every cut exceeds the total demand of commodities that must cross it; see~\eqref{eq:cut-condition} below.
The concept of cut-sufficiency is of interest in its own right and is an active research topic; see, e.g., the recent work of Poremba and Shepherd~\cite{poremba2023cut} and references therein.
For a fixed digraph, the Nagamochi--Ibaraki Theorem asserts that if, for any integer arc capacities and demands, the cut condition guarantees the existence of a feasible multiflow, it also guarantees the existence of a feasible \emph{integer} multiflow. For most classes of digraphs in which the existence of a feasible multiflow implies the existence of a feasible integer multiflow, the cut condition is indeed sufficient. However, the multiflow instance in \Cref{fig:cut-cond} illustrates that the cut condition is generally not sufficient for the class of series-parallel digraphs studied in this paper.
\begin{figure}[tb]
\centering
\begin{tikzpicture}
\node [CircleLight] (s1) at (-2.5,0) {$s_1$};
\node [CircleDark] (s2) at (-1,0) {$s_2$};
\node [CircleLight] (t1) at (1,0) {$t_1$};
\node [CircleDark] (t2) at (2.5,0) {$t_2$};
\draw [->,thick] (s1) -- (s2);
\draw [->,thick] (s1) to [out=20,in=160] (t2);
\draw [->,thick] (s2) -- (t1);
\draw [->,thick] (t1) -- (t2);
\end{tikzpicture}
\caption{An infeasible multiflow instance with two unit-demand commodities on a series-parallel digraph with unit arc capacities satisfying the cut condition}
\label{fig:cut-cond}
\end{figure}
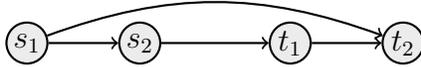

\paragraph*{Single-source unsplittable flows.}
Even though our results hold for general multiflow instances in series-parallel digraphs, where each commodity is routed from a distinct source to a distinct sink, our main result is primarily inspired by prior research on single-source unsplittable flows, which we review next. For a comprehensive overview of results on general unsplittable multiflows, we refer to Kolliopoulos' survey~\cite{kolliopoulos2007edge}.

Single-source unsplittable flows, where all commodities share a common source node~$s$, have first been studied by Kleinberg~\cite{Kleinberg-Diss96}. Dinitz, Garg, and Goemans~\cite{DGG99} prove that a given fractional flow~$(x_e)_{e\in E}$ can always be turned into an unsplittable flow~$(y_e)_{e\in E}$ (given by~$s$--$t_i$~paths~$P_i$ with~$y_e\coloneqq\sum_{i:e\in P_i}d_i$) such that
\begin{align}
\label{eq:DGG}
y_e\leq x_e+d_{\max}\qquad\text{for every arc~$e\in E$,}
\end{align}
where~$d_{\max}\coloneqq\max_{i\in[k]}d_i$. A famous conjecture of Goemans says that flow~$(x_e)_{e\in E}$ can even be expressed as a convex combination of unsplittable flows~$(y_e)_{e\in E}$ that satisfy~\eqref{eq:DGG}.\footnote{Goemans' original conjecture asserts that, for any arc costs, an unsplittable flow can be found that satisfies~\eqref{eq:DGG} and whose cost does not exceed that of~$(x_e)_{e\in E}$. This is equivalent to the stated existence of a convex combination; see, e.g.,~\cite{MartensSalazarSkut06}.} Skutella~\cite{Skut-MathProg2002}, based on a flow augmentation method also used by Kolliopoulos and Stein~\cite{KolliopoulosStein2002}, proves that Goemans' Conjecture is valid when the demands of the commodities are multiples of one another. For acyclic digraphs, Morell and Skutella~\cite{MorellSkutella2021} show that any fractional flow~$(x_e)_{e\in E}$ can be turned into an unsplittable flow~$(y_e)_{e\in E}$ that satisfies the lower bound
\begin{align}
\label{eq:MS}
y_e\geq x_e-d_{\max}\qquad\text{for every arc~$e\in E$.}
\end{align}
This result can also be achieved by augmenting flow in the reverse direction along the designated cycles of Dinitz, Garg, and Goemans~\cite{DGG99}. Furthermore, they conjecture the existence of an unsplittable flow that satisfies both the upper bounds~\eqref{eq:DGG} and the lower bounds~\eqref{eq:MS}. Only recently, for the special case of acyclic and planar digraphs, this conjecture has been proved by Traub, Vargas Koch, and Zenklusen~\cite{TraubVargasKochZenklusen2024}. Morell and Skutella also propose the following strengthening to Goemans' conjecture.

\begin{conjecture}[Morell and Skutella~\cite{MorellSkutella2021}]
\label{conj:strong}
For the single-source unsplittable flow problem on acyclic digraphs, any fractional flow~$(x_e)_{e\in E}$ can be expressed as a convex combination of unsplittable flows~$(y_e)_{e\in E}$ that satisfy both~\eqref{eq:DGG} and~\eqref{eq:MS}.
\end{conjecture}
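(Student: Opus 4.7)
My plan focuses on the special case of series-parallel digraphs, which I believe is the natural target (and the one that the paper's title suggests is being addressed); for arbitrary acyclic digraphs the conjecture appears substantially harder and my approach would not directly generalize. The strategy exploits the recursive structure: every series-parallel digraph is either a single arc, a series composition $G = G_1 \cdot G_2$, or a parallel composition $G = G_1 \parallel G_2$. I would prove the statement by induction on this decomposition.

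The base case of a single arc is trivial, and the series composition poses no difficulty: every commodity traverses both $G_1$ and $G_2$ in order, so an inductive convex decomposition on each sub-digraph combines arc-wise into one on $G$ with the bounds \eqref{eq:DGG} and \eqref{eq:MS} preserved. The heart of the argument is the parallel composition. Here, each commodity $i$ has some fraction $\lambda_i \in [0,1]$ of its demand routed through $G_1$, and we must unsplittably assign it to exactly one branch. I would recast this assignment step as a single-commodity network flow rounding problem: construct an auxiliary single-commodity flow whose integer solutions correspond to branch assignments, and invoke the integrality of the flow polytope \eqref{eq:rounding-bounds} to write the fractional assignment as a convex combination of integer ones, each shifting the branch totals by less than $d_{\max}$. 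This is consistent with the paper's en-route claim that multiflows on series-parallel digraphs reduce to single-commodity network flows.

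The main obstacle is to avoid accumulating errors through the recursion. A naive composition of the outer branch rounding with the inner inductive decomposition would add their deviations, potentially violating the strict bound of $d_{\max}$. To prevent this, the two steps must be coupled: once a branch assignment is committed, one applies the inductive decomposition to the \emph{induced} integral branch flow rather than to the original fractional flow, so that the inner step introduces no additional error beyond the one already accounted for at the outer level. Ensuring this coupling delivers both the upper bound \eqref{eq:DGG} and the lower bound \eqref{eq:MS} simultaneously is the delicate part---each of the techniques of Dinitz, Garg, and Goemans and of Morell and Skutella controls only one direction, and a unified argument must track both. I expect the proof to hinge on a carefully chosen convex combination at each parallel split that balances the excess and deficit across the two branches, rather than on any single deterministic rounding.
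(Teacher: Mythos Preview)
Your inductive framework over the series-parallel decomposition is the right scaffold, and you correctly isolate the parallel composition as the crux and the error-accumulation issue as the central obstacle. However, the proposal has a genuine gap at precisely that point.

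First, the reduction to single-commodity rounding is the paper's tool for the \emph{integer} multiflow result (\Cref{thm:integrality}), not for the unsplittable convex decomposition. At a parallel split you may have many commodities with fractional branch shares $\lambda_i\in(0,1)$; writing this as an auxiliary single-commodity flow and invoking~\eqref{eq:rounding-bounds} does not by itself give a convex combination of \emph{branch assignments} whose total deviates by less than $d_{\max}$ on each side, let alone one that composes cleanly with deeper splits.

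Second, your coupling idea---``apply the inductive decomposition to the induced integral branch flow''---does not prevent accumulation as stated. After committing a branch assignment, the total flow entering a branch has already shifted by up to $d_{\max}$ from $x$; the induced flow on arcs \emph{inside} the branch is not the restriction of $x$, so the inductive bound you obtain there is relative to the wrong baseline. Recursing on that object yields bounds against the shifted flow, not against $x$, and the errors add.

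The paper closes this gap with an idea your proposal is missing entirely: a preprocessing step (\Cref{thm:almost-unsplittable}) that rewrites any multiflow, without changing the total flow, into an \emph{almost unsplittable} one in which at most two commodities are fractional in every component $G_\omega$, and at most one of them is split across the two children of a $p$-node. This collapses the branch-assignment problem at each node to only four routing options. The convex coefficients $\mu^\omega_j$ are then given by explicit closed-form expressions (\Cref{lem:mu-values}), and the decomposition is built bottom-up so that the coefficients are \emph{globally consistent} across all levels of the $sp$-tree (\Cref{lem:recursive:convex}, with the delicate $p$-node step governed by the eight-variable system~\eqref{eq:linear-system} and \Cref{lem:lambdas}). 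The $d_{\max}$ bound then falls out of a direct case analysis (\Cref{lem:dmax-bound}): for each of the four options $j$, whenever $\mu^\omega_j>0$ the sign structure forces $|\bar y_\omega-\bar x_\omega|<d_{\max}$. There is no accumulation because the bound is proved \emph{per component}, not per level of recursion. Your proposal would need the almost-unsplittable reduction, or an equivalent device, to make the parallel step tractable.
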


Using techniques of Martens, Salazar, and Skutella~\cite{MartensSalazarSkut06}, they show that their \Cref{conj:strong} is valid when the demands of the commodities are multiples of one another. Moreover, for the special case of acyclic and planar digraphs, and under the slightly relaxed lower and upper bounds~$x_e-2d_{\max}\leq y_e\leq x_e+2d_{\max}$ for~$e\in E$, \Cref{conj:strong} is proved by Traub, Vargas Koch, and Zenklusen~\cite{TraubVargasKochZenklusen2024}, based on a reduction to a well-structured discrepancy problem.

\paragraph*{Contribution and outline}
After discussing preliminaries and notation for multiflows on series-parallel digraphs\footnote{We emphasize that we only consider series-parallel \emph{directed} graphs in this paper and, thus, do not contribute to the line of research on multiflows in (series-parallel) undirected graphs; see, e.g.,~\cite{chekuri2013flow}.} in \Cref{sec:preliminaries}, in \Cref{sec:cut-condition} we present the following observations on (integer) multiflows in series-parallel digraphs.

\begin{theorem}
\label{thm:integrality}
Consider a multiflow instance on a series-parallel digraph.
\begin{enumerate}[(a)]
\item\label{thm:integrality:a} For integer demands,  the total arc flows~$(x_e)_{e\in E}$ of any multiflow are a convex combination of  total arc flows~$(y_e)_{e\in E}$ of integer multiflows  satisfying~\eqref{eq:rounding-bounds}.
\item\label{thm:integrality:b} For (integer) arc capacities, a feasible (integer) multiflow, if one exists, can be efficiently found through a single-commodity flow computation.
\end{enumerate}
\end{theorem}

\begin{remark}
Notice that \cref{thm:integrality}(a) is only concerned with the total arc flows of multiflows instead of the multiflows itself.  In fact, as stated in the theorem, every \emph{total flow} of a multiflow can be expressed as a convex combination of \emph{total flows} of integer multiflows satisfying~\eqref{eq:rounding-bounds}. However, this is in general not true for the multiflow vector, i.e., the vector containing all commodity flows.  In \cref{fig:counter-example_CONV-MUSF} we give an example of a multiflow that cannot be expressed as a convex combination of integer multiflows, while its total flow can.
\end{remark}

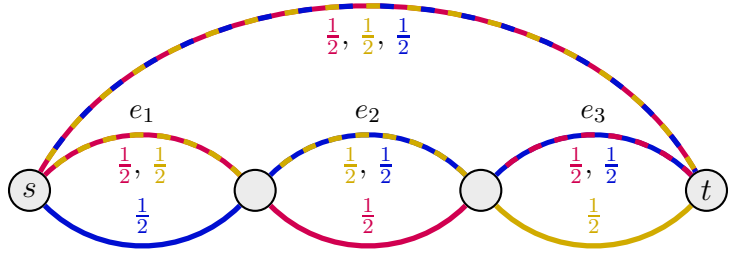
\begin{figure}[t]
    \centering
    \begin{tikzpicture}
        \node[CircleLight] (A) at (0,0) {$s$};
        \node[CircleLight] (B) at (3,0) {};
        \node[CircleLight] (C) at (6,0) {};
        \node[CircleLight] (D) at (9,0) {$t$};

        \draw[cbcolor2, line width=2pt, bend right=35] (A) to  node[midway, above] {\small $\nicefrac{1}{2}$} (B);
        \draw[cbcolor1, line width=1.9pt, bend left=35] (A) edge node[midway, above,black] {\small $e_1$}  node[midway, below,black] {\small \textcolor{cbcolor1}{$\nicefrac{1}{2}$},  \textcolor{cbcolor3}{$\nicefrac{1}{2}$}} (B);
        \draw[cbcolor3, line width=2pt, bend left=35,dash pattern=on 0ex off 1ex on 1ex off 0ex] (A) edge (B);

        \draw[cbcolor1, line width=2pt, bend right=35] (B) to node[midway, above] {\small $\nicefrac{1}{2}$} (C);
        \draw[cbcolor2, line width=1.9pt, bend left=35] (B) edge node[midway, above,black] {\small $e_2$}  node[midway, below,black] {\small  \textcolor{cbcolor3}{$\nicefrac{1}{2}$},  \textcolor{cbcolor2}{$\nicefrac{1}{2}$}} (C);
        \draw[cbcolor3, line width=2pt, bend left=35,dash pattern=on 0ex off 1ex on 1ex off 0ex] (B) edge (C);

        \draw[cbcolor3, line width=2pt, bend right=35] (C) to node[midway, above] {\small $\nicefrac{1}{2}$} (D);
        \draw[cbcolor2, line width=1.9pt, bend left=35] (C) edge  node[midway, above,black] {\small $e_3$}  node[midway, below,black] {\small \textcolor{cbcolor1}{$\nicefrac{1}{2}$},  \textcolor{cbcolor2}{$\nicefrac{1}{2}$}} (D);
        \draw[cbcolor1, line width=2pt, bend left=35,dash pattern=on 0ex off 1ex on 1ex off 0ex] (C) edge (D);

        \draw[cbcolor1, line width=1.9pt, bend left=35, out=50, in=130] (A) to (D);
        \draw[cbcolor3, line width=2pt, bend left=35, out=50, in=130, dash pattern=on 0ex off 1ex on 1ex off 1ex] (A) to node[midway, below,black] {\small \textcolor{cbcolor1}{$\nicefrac{1}{2}$},  \textcolor{cbcolor3}{$\nicefrac{1}{2}$},  \textcolor{cbcolor2}{$\nicefrac{1}{2}$}} (D);
        \draw[cbcolor2, line width=2pt, bend left=35, out=50, in=130, dash pattern=on 0ex off 2ex on 1ex off 0ex] (A) to (D);
    \end{tikzpicture}
    \caption{An instance of a multiflow that cannot be expressed as the convex combination of integer multiflows that satisfy~\eqref{eq:rounding-bounds}. In this instance, three commodities (red, yellow, blue) route a demand of~$1$ from $s$ to $t$ each.  The fractional flow is given by the numbers above.  In particular, the fractional multiflow routes a flow of $\nicefrac{1}{2}$ of exactly two commodities over each of the arcs $e_1, e_2, e_3$ resulting in a total flow of $x_e = 1$ on these arcs. By~\eqref{eq:rounding-bounds}, every integer flow must therefore also send a total flow of $y_e = 1$ over these arcs which is not possible since the integer flow can only be sent over arcs with fractional flow of the same commodity.}
    \label{fig:counter-example_CONV-MUSF}
\end{figure}

In light of the fact that multiflow instances on series-parallel digraphs generally do not satisfy the assumptions of the Nagamochi--Ibaraki Theorem (i.e., the cut condition is not sufficient; see \Cref{fig:cut-cond}), the strong integrality property in \Cref{thm:integrality}\eqref{thm:integrality:a} might seem surprising. On the other hand, the proof of \Cref{thm:integrality} relies on the simple observation that, by carefully subdividing commodities, multiflow instances on series-parallel digraphs can be efficiently reduced to a certain subclass of instances which can be solved by single-commodity flow techniques. The integrality property in \Cref{thm:integrality}\eqref{thm:integrality:a} is then inherited from the integrality of the network flow polytope.
To the best of our knowledge, and somewhat surprisingly, these observations have not appeared in the literature before. The general idea of subdividing commodities, however, was used before, for example, in the proof of the Okamura-Seymour Theorem~\cite{OkamuraSeymour1981}.

At the end of \Cref{sec:cut-condition}, we discuss two strengthenings of the classical cut condition that relies on considering arbitrary multicuts for subsets of commodities. We prove that this stronger cut condition is necessary and sufficient for the existence of feasible multiflows on series-parallel digraphs.

In \Cref{sec:almost-unsplittable}, as an intermediate step toward constructing unsplittable multiflows, we introduce the notion of \emph{almost unsplittable} multiflows. In such multiflows, at most two commodities are allowed to be routed fractionally through any subdigraph that arises in the series-parallel composition of a series-parallel digraph. We show that any fractional multiflow can be transformed into an almost unsplittable multiflow without altering the total flow values on the arcs.

Finally, in \Cref{sec:convex:combination} we obtain our main result, which generalizes the strong integrality result in \Cref{thm:integrality}\eqref{thm:integrality:a} towards unsplittable multiflows.

\begin{theorem}
\label{thm:main}
The total arc flows~$(x_e)_{e\in E}$ of a fractional multiflow in a series-parallel digraph can be expressed as a convex combination of total arc flows~$(y_e)_{e\in E}$ of unsplittable multiflows that satisfy
\vspace{-0.5ex}
\begin{align*}
\label{eq:main}
x_e-d_{\max}<y_e<x_e+d_{\max}\qquad\text{for every arc~$e\in E$.}
\end{align*}
\end{theorem}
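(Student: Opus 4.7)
The plan is to prove \cref{thm:main} by induction on the series-parallel decomposition of $G$; the base case of a single arc is trivial since every commodity routes along the unique arc. For a series composition $G = G_1 \cdot G_2$ sharing an intermediate node $v$, every commodity $i$ whose source and sink lie on opposite sides of $v$ is split into two subcommodities of demand $d_i$, one in $G_1$ and one in $G_2$; the fractional multiflow restricts naturally to fractional multiflows on the two halves. Applying the inductive hypothesis independently on $G_1$ and $G_2$ and taking the product convex combination yields an unsplittable multiflow in $G$ by concatenating, for each crossing commodity, its $G_1$- and $G_2$-paths into a single $s_i$-$t_i$-path; the strict $d_{\max}$ deviation is inherited coordinatewise on each side.

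The substantive difficulty lies in the parallel composition $G = G_1 \parallel G_2$ with shared terminals $s^\ast, t^\ast$. Commodities partition into \emph{fixed} commodities (contained entirely in one side, forced by the SP topology) and \emph{choice} commodities $C$ with $(s_i, t_i) = (s^\ast, t^\ast)$, whose fractional flow splits in proportions $\alpha_i \in [0,1]$ through $G_1$ and $1-\alpha_i$ through $G_2$; in an unsplittable multiflow each $i \in C$ must be assigned entirely to one side. The plan is to use a dependent rounding of $\alpha$ (such as randomized pipage rounding) to produce a distribution over integer assignments $a \in \{0,1\}^C$ whose marginals satisfy $\Pr[a_i = 1] = \alpha_i$ and whose per-realization aggregate deviation $|\sum_{i \in C} d_i (a_i - \alpha_i)|$ is strictly less than $d_{\max}$, contributed by at most one still-fractional coordinate at the end of the rounding process. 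For each realized $a$, one reroutes the choice commodities through their assigned side using a canonical rescaling of the original fractional $s^\ast$-$t^\ast$ routing, which by construction averages (over the randomized $a$) to the original arc flow $x_e$ on every internal arc of $G_1$ and $G_2$.

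The hard part is preventing the deviations from accumulating: a naive inductive call on $G_1$ and $G_2$ would layer an extra $d_{\max}$ on top of the rescaling error, yielding a bound closer to $2 d_{\max}$ rather than the required $d_{\max}$. I expect the resolution to exploit \cref{thm:integrality}(a) at the parallel step itself, rather than recursing through \cref{thm:main}: at the point where the cross-demand has been rounded to an integer, the residual subproblem in $G_1$ becomes a single-commodity-style flow reallocation plus the already-rounded fixed commodities, so a single application of \cref{thm:integrality}(a)-type rounding absorbs the full slack. The key quantitative observation is that every internal arc of $G_1$ carries at most the total $s^\ast$-$t^\ast$ cross-flow in the original solution, so the per-arc impact of the cross-demand rounding is bounded by the aggregate rounding deviation times a factor $\le 1$, preserving the strict $d_{\max}$ bound. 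Combining this with the series case and a small perturbation to turn non-strict bounds into strict ones completes the induction.
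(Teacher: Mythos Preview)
Your plan has a genuine gap at exactly the point you flag as hard, and the fix you sketch does not work.

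First, the ``key quantitative observation'' is false: an internal arc of $G_1$ can carry flow of both fixed commodities and choice commodities, so its load is not bounded by the total $s^\ast$-$t^\ast$ cross-flow, and the per-arc impact of reassigning choice commodities is not a sub-unit multiple of the aggregate rounding deviation. Second, the appeal to \cref{thm:integrality}\ref{thm:integrality:a} is a category error: that result rounds a flow with \emph{integer} excesses to integer arc flows with deviation $<1$; here the demands $d_i$ are arbitrary reals, and after your pipage step the residual instance in $G_1$ is still a genuine multiflow instance (fixed commodities plus the subset of choice commodities assigned to $G_1$), not a single-commodity problem. Nothing in your outline prevents the recursive call on $G_1$ from contributing its own $d_{\max}$, so you are back at $2d_{\max}$. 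The ``canonical rescaling'' is also undefined when $\alpha_i$ is tiny: sending all of $d_i$ along a scaled-up copy of its $G_1$-routing can blow up individual arcs far beyond $d_{\max}$.

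The paper avoids accumulation by a different mechanism altogether. It first transforms $\vec{\mcflow}$ (without changing total arc flows) into an \emph{almost unsplittable} multiflow in which every component $G_\spnode$ has at most two fractionally routed commodities $\ileft,\iright$. This gives, at every $\spnode$, only four routing options $J^\spnode_1,\dots,J^\spnode_4$, and the convex weights $\mu^\spnode_j$ are chosen by the explicit formula~\eqref{eq:definition:mu}. The crucial point (\cref{lem:dmax-bound}) is local, not recursive: for $j=2,3$ the deviation is trivially $<d_{\max}$ because a single fractional commodity flips; for $j=1$ one has $\mu^\spnode_1>0$ only when $z_{\spnode,\ileft}+z_{\spnode,\iright}<1$, and for $j=4$ only when $z_{\spnode,\ileft}+z_{\spnode,\iright}>1$, and in either case the two contributions together stay strictly below $d_{\max}$. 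The recursion (\cref{lem:recursive:convex}) then merely stitches these locally chosen options into globally consistent unsplittable flows via the $\lambda$-system of \cref{lem:lambdas}; no deviations are added along the way. What your proposal is missing is precisely this preprocessing to two fractional commodities per component and the realization that, with only two, a clever choice of weights makes the $d_{\max}$ bound hold \emph{at every leaf separately}.
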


This result implies, in particular, that \Cref{conj:strong} holds for series-parallel digraphs, even for general multiflow instances where commodities have individual source and sink nodes. Even for the weaker conjecture of Goemans,
\Cref{thm:main} provides the first proof for a non-trivial class of digraphs.

\Cref{thm:main} is obtained as follows. We first show that, without altering the total arc flows, any multiflow can be transformed into an \emph{almost unsplittable} multiflow, where no more than two commodities are fractionally routed through any component of the series-parallel digraph. Such an almost unsplittable multiflow can be readily expressed as a convex combination of unsplittable multiflows, with total arc flows deviating by no more than~$2d_{\max}$ (\Cref{thm:2dmax}). Note that for the specific case of single-source multiflows, Traub, Vargas Koch, and Zenklusen~\cite{TraubVargasKochZenklusen2024} prove the same result for the broader class of acyclic and planar digraphs. Reducing the bound from~$2d_{\max}$ to~$d_{\max}$, however, requires more elaborate arguments and refined techniques. It is worthwhile to mention that, while \Cref{thm:main} is formulated as an existence result, these unsplittable multiflows can be efficiently computed.


\section{Preliminaries and notation}
\label{sec:preliminaries}
\paragraph{Multiflows.}
We consider a directed graph~$G=(V,E)$ together with~$k$ commodities given by source-sink pairs~$(s_i,t_i)\in V\times V$, $i\in[k]\coloneqq\{1,\dots,k\}$.
For given demands~$d_i\in\R_{\geq0}$, $i\in[k]$, a multiflow~$\vec{\mcflow}$ sends~$d_i$ units of flow from~$s_i$ to~$t_i$ for every commodity~$i\in[k]$. That is,~$\vec{\mcflow}$ specifies for every commodity~$i\in[k]$ and every arc~$e\in E$ a flow value~$\mcflow_{e,i}\geq0$, such that the following flow conservation constraints hold:
\[
	\sum_{e\in\delta^+(v)} \mcflow_{e, i} - \sum_{e\in\delta^-(v)} \mcflow_{e, i} =
	\begin{cases}
		d_i & \text{if~$v=s_i$,}\\[-0.5ex]
		-d_i & \text{if~$v=t_i$,}\\[-0.5ex]
		0 & \text{otherwise,}
	\end{cases}
	\qquad
	\text{for every~$v\in V$, $i\in[k]$.}
\]
Here,~$\delta^+(v)$ and~$\delta^-(v)$ denote the sets of arcs~$e\in E$ leaving and entering node~$v$, respectively.
In what follows we always assume that~$d_i>0$,~$s_i\neq t_i$, and that there exists an $s_i$--$t_i$~path in~$G$ for every~$i\in[k]$. A multiflow~$\vec{\mcflow}$ is \emph{unsplittable} if the demand~$d_i$ is routed along a single~$s_i$--$t_i$~path, for every~$i\in[k]$. More precisely,~$\mcflow_{e,i}\in\{0,d_i\}$ for every~$i\in[k]$,~$e\in E$, and the arcs~$e\in E$ with~$\mcflow_{e,i}=d_i$ form an~$s_i$--$t_i$~path.

For a multiflow~$\vec{\mcflow}=(\mcflow_{e,i})_{e\in E, i\in[k]}$, the \emph{total flow}~$\vec{\totalflow}=(\totalflow_{e})_{e \in E}$ is given by the sum of the flows of all commodities, that is,
$\totalflow_e\coloneqq\sum_{i\in[k]}\mcflow_{e,i}$. A  multiflow~$\vec{\mcflow}$ satisfies given \emph{arc capacities}~$c_e\in\R_{\geq0}$,~$e\in E$, if~$\totalflow_e\leq c_e$ for every arc~$e\in E$. In this case we say that multiflow~$\vec{X}$ is \emph{feasible}. A multiflow instance is called \emph{feasible} if there is a feasible multiflow.

\paragraph{Series-parallel digraphs.}
A  digraph~$G=(V,E)$ is \emph{series-parallel}\footnote{Series–parallel digraphs come in two versions in the literature, the `edge version' and the `vertex version'~\cite{valdes1982recognition}; here we refer to the more common `edge version'.} if it has a designated start node~$u_0\in V$ and a designated end node~$v_0\in V$ that satisfy one of the following three (recursive) conditions:
\begin{itemize}\itemsep0.3ex
\item $G$ consists of a single arc from~$u_0$ to~$v_0$, that is,~$V=\{u_0,v_0\}$ and~$E=\{(u_0,v_0)\}$.
\item $G$ is a \emph{series composition}, that is, there are two series-parallel digraphs \mbox{$G_1 = (V_1, E_1)$} with designated~$u_1,v_1\in V_1$ and~\mbox{$G_2=(V_2,E_2)$} with designated $u_2,v_2\in V_2$ such that $V=V_1\cup V_2$, $E=E_1\cup E_2$, $u_0=u_1$, $v_1=u_2$, $v_0=v_2$, and~$V_1\cap V_2=\{v_1\}$.
\item $G$ is a \emph{parallel composition}, that is, there are two series-parallel digraphs \mbox{$G_1 = (V_1, E_1)$} with designated~$u_1,v_1\in V_1$ and~\mbox{$G_2=(V_2,E_2)$} with designated~$u_2,v_2\in V_2$ such that $V=V_1\cup V_2$, $E=E_1\cup E_2$, $u_0=u_1=u_2$, $v_0=v_1=v_2$, and~$V_1\cap V_2=\{u_0,v_0\}$.
\end{itemize}

We consider \emph{decompositions} of series-parallel digraphs represented by \emph{$sp$-trees}.
Given a series-parallel digraph~$G=(V,E)$, the corresponding $sp$-tree \mbox{$T=(V_T,E_T)$} is a rooted binary tree where every node~$\spnode\in V_T$ has one of the types \emph{$p$-node}, \emph{$s$-node}, or \emph{$q$-node}, and a label~$(u_{\spnode},v_{\spnode})\in V\times V$.
Then, node~$\spnode$ represents a series-parallel subdigraph~$G_\spnode=(V_\spnode,E_\spnode)$ of~$G$ with start node~$u_{\spnode}$ and end node~$v_{\spnode}$.
The root of the $sp$-tree has label~$(u_0,v_0)$ and represents the entire series-parallel digraph~$G$. The leafs of the $sp$-tree are of type~$q$ and represent the arcs of~$G$. More precisely, a leaf node~$\spnode$ corresponds to the subdigraph~$G_\spnode$ that consists of a single arc from~$u_{\spnode}$ to~$v_{\spnode}$, that is,~$V_\spnode=\{u_\spnode,v_\spnode\}$ and~$E_\spnode=\{(u_{\spnode},v_{\spnode})\}$.
All non-leaf nodes of~$T$ are either of type~$s$ or~$p$. If node~$\spnode\in V_T$ is of type~$p$, then~$G_\spnode$ is obtained by parallel composition of two series-parallel subdigraphs that are represented by the two children~$\spnode_1$ and~$\spnode_2$ of~$\spnode$. If node~$\spnode$ is of type~$s$, then~$G_\spnode$ is a series composition of two series-parallel digraphs represented by the two children~$\spnode_1$ and $\spnode_2$ of~$\spnode$; an illustrative example is given in  \Cref{fig:series-parallel}.
For a node~$\spnode$ of the $sp$-tree, let~$\innerV_{\spnode}$ denote the set of \emph{inner nodes} of subdigraph~$G_{\spnode}$, that is, $\innerV_{\spnode}\coloneqq V_{\spnode}\setminus\{u_{\spnode},v_{\spnode}\}$.

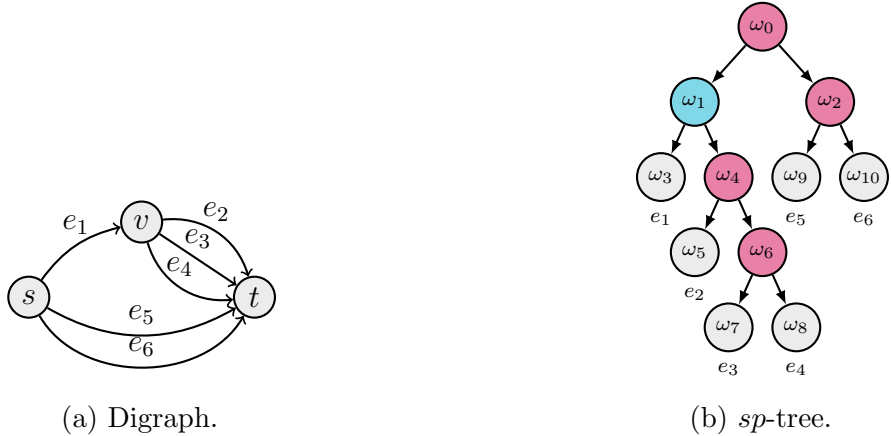
\begin{figure}%
	\begin{subfigure}{.5\textwidth}
		\centering%
		\begin{tikzpicture}
			\node[CircleLight] (s) at (0,0) {$u_0$};
			\node[CircleLight] (v) at (1.5,1) {$v$};
			\node[CircleLight] (t) at (3,0) {$v_0$};

			\draw[->, thick] (s) edge[bend left=20] node[above, midway] {$e_1$} (v);

			\draw[->, thick] (v) edge[bend left=40] node[above, midway] {$e_2$}  (t);
			\draw[->, thick] (v) edge node[above, midway] {$e_3$}  (t);
			\draw[->, thick] (v) edge[bend right=40] node[above, midway] {$e_4$}  (t);

			\draw[->, thick] (s) edge[bend right=30] node[above, midway] {$e_5$}  (t);
			\draw[->, thick] (s) edge[bend right=60] node[above, midway] {$e_6$}  (t);
		\end{tikzpicture}
		\caption{Digraph.}
	\end{subfigure}%
	\begin{subfigure}{.5\textwidth}
		\centering%
		\def\xdist{.45}%
		\def\ydist{1}%
		\begin{tikzpicture}
			\node[PNode] (w0) at (0,0) {\scriptsize$\spnode_0$};

			\node[SNode] (w1) at (-2*\xdist,-\ydist) {\scriptsize$\spnode_1$};
			\node[PNode] (w2) at (2*\xdist,-\ydist) {\scriptsize$\spnode_2$};

			\node[QNode, label = below:{\scriptsize$e_1$}] (w3) at (-3*\xdist,-2*\ydist) {\scriptsize$\spnode_3$};
			\node[PNode] (w4) at (-\xdist,-2*\ydist) {\scriptsize$\spnode_4$};
			\node[QNode, label = below:{\scriptsize$e_5$}] (w9) at (\xdist,-2*\ydist) {\scriptsize$\spnode_9$};
			\node[QNode, label = below:{\scriptsize$e_6$}] (w10) at (3*\xdist,-2*\ydist) {\scriptsize$\spnode_{10}$};

			\node[QNode, label = below:{\scriptsize$e_2$}] (w5) at (-2*\xdist,-3*\ydist) {\scriptsize$\spnode_5$};
			\node[PNode] (w6) at (0,-3*\ydist) {\scriptsize$\spnode_6$};

			\node[QNode, label = below:{\scriptsize$e_3$}] (w7) at (-\xdist,-4*\ydist) {\scriptsize$\spnode_7$};
			\node[QNode, label = below:{\scriptsize$e_4$}] (w8) at (\xdist,-4*\ydist) {\scriptsize$\spnode_8$};

			\draw[-{latex}, thick]
				(w0) edge (w1)
				(w0) edge (w2)
				(w1) edge (w3)
				(w1) edge (w4)
				(w2) edge (w9)
				(w2) edge (w10)
				(w4) edge (w5)
				(w4) edge (w6)
				(w6) edge (w7)
				(w6) edge (w8);
		\end{tikzpicture}
		\caption{$sp$-tree.}
	\end{subfigure}%
	\caption{Series-parallel digraph and its $sp$-tree. Nodes $\spnode_0$, $\spnode_2$, $\spnode_4$, and $\spnode_6$ (red) are classified as type~$p$, while $\spnode_1$ (blue) is classified as type~$s$. The leaves (gray) are of type~$q$ and correspond to the arcs of the digraph.}%
	\label{fig:series-parallel}%
\end{figure}

\paragraph{Multiflows in series-parallel digraphs.}
For a given multiflow~$\vec{\mcflow}$ in~$G$, we are interested in the amount of flow of every commodity that is routed through every subcomponent of the series-parallel graph.
To this end, we use the decomposition of the series-parallel graph given by the $sp$-tree~$T$.
Formally, we define for every~$\spnode \in V_T$ with associated subdigraph~$G_\spnode$ the \emph{demand share $z_{\spnode,i} (\vec{\mcflow}) \in [0,1]$} of commodity~$i\in[k]$ that is routed through the subdigraph $G_\spnode$ by
\[
	z_{\spnode,i} (\vec{\mcflow})\coloneqq
	\begin{cases}
		1 &\text{if~$\innerV_\spnode \cap\{s_i,t_i\}\neq\emptyset$,} \\
		\frac{1}{d_i}\sum_{\substack{e \in \delta^+_{\spnode}(u_{\spnode})}} \mcflow_{e, i} & \text{otherwise.}
	\end{cases}
\]
Here, $\delta^+_{\spnode}(\cdot)$ denotes the set of outgoing arcs from a particular node within~$G_{\spnode}$.
Moreover, let~$\vec{z}^{\spnode} (\vec{\mcflow}) \coloneqq (z_{\spnode, i} (\vec{\mcflow}))_{i \in [k]}$.
This definition is illustrated in \cref{example}.

\begin{definition}
\label{def:routed-frac}
For~$\spnode\in V_T$ and~$i\in[k]$, we say that commodity~$i$ is \emph{routed unsplittably} through the subdigraph~$G_\spnode$ if~$z_{\spnode, i}(\vec{\mcflow}) \in \{0, 1\}$; otherwise, commodity~$i$ is \emph{routed fractionally} through~$G_\spnode$.
Let
$\fractionalDemands_\spnode (\vec{\mcflow}) \coloneqq \{ i \in [k] : 0<z_{\spnode,i} (\vec{\mcflow})<1 \}$
denote the subset of commodities that are routed fractionally through~$G_\spnode$ and let
$\unsplittableDemands_\spnode (\vec{\mcflow}) \coloneqq \{ i \in [k] : z_{\spnode, i} = 1 \}$
denote the subset of commodities that are routed completely through~$G_\spnode$.
\end{definition}

Notice that a multiflow~$\vec{\mcflow}$ is an unsplittable flow if every commodity is routed unsplittably through every subdigraph~$G_{\spnode}$ or, equivalently, if~$\fractionalDemands_{\spnode} (\vec{\mcflow}) = \emptyset$ for every node~$\spnode$ of the $sp$-tree.

\begin{example} \label{example}
In order to illustrate the concepts presented, we use the following example  throughout the paper.
As an instance, we use the series-parallel digraph with corresponding $sp$-tree depicted in \Cref{fig:series-parallel} with the shown $sp$-tree. In this digraph, we consider eight commodities, sharing the same source~$s=u_0$ and sink~$t=v_0$, and with demands $(1,2,1,1,1,2,1,1)$. The demands are satisfied by the multiflow~$\vec{\mcflow}$ given by the following matrix.

\[
\arraycolsep1ex
\begin{array}{c|cccccccc}
\multicolumn{1}{c}{} & \multicolumn{8}{c}{\text{commodity}}\\
 & 1 & 2 & 3 & 4 & 5 & 6 & 7 & 8 \\
\hline
e_1 & 1 & 2 & 1 & 1 & 1 & \nicefrac12 & 0 & 0 \\
e_2 & 1 & \nicefrac54 & 0 & 0 & 0 & 0 & 0 & 0 \\
\tikz[overlay]{\node[rotate=90] at (-1em, 0) {arc};}
e_3 & 0 & \nicefrac34 & 1 & \nicefrac12 & 0 & 0 & 0 & 0 \\
e_4 & 0 & 0 & 0 & \nicefrac12 & 1 & \nicefrac12 & 0 & 0 \\
e_5 & 0 & 0 & 0 & 0 & 0 & \nicefrac32 & \nicefrac12 & 0 \\
e_6 & 0 & 0 & 0 & 0 & 0 & 0 & \nicefrac12 & 1 \\
\end{array}
\]
This multiflow yields the total flow~\[\vec{\totalflow} = (\nicefrac{13}2, \nicefrac94, \nicefrac94, 2, 2, \nicefrac32).\] The flow is feasible for capacities $c_e \coloneqq \totalflow_e$ for all arcs $e$.
Consider the node~$\spnode_6$ of the $sp$-tree. For this node, we have $u_{\spnode_6} = v$ and $\delta^+_{\spnode_6} (v) = \{ e_3, e_4 \}$. For $i = 2$, we get $z_{\spnode,i} (\vec{\mcflow}) = \frac{1}{2} (\nicefrac34 + 0) = \nicefrac{3}{8}$. For all commodities, we have \[\vec{z}^{\spnode_6} (\vec{\mcflow}) = (0, \nicefrac{3}{8}, 1, 1, 1, \nicefrac{1}{4}, 0, 0),\] thus the commodities $2$ and $6$ are routed fractionally through $G_{\spnode_6}$, i.e. $\fractionalDemands_{\spnode_6} (\vec{\mcflow}) = \{2, 6 \}$, and the commodities $3, 4$, and $5$ are routed unsplittably through~$G_{\spnode_6}$.
\end{example}

We outline three fundamental properties of the relationship between the demand shares in the subdigraph of a non-leaf node and the subdigraphs corresponding to its child nodes.

\begin{observation}
\label{obs:z-in-series-parallel}
Let~$\spnode\in V_T$ be a non-leaf node with child nodes~$\spnode_1,\spnode_2$ and let~$i\in[k]$ be some commodity.
\begin{enumerate}[(a)]
\item\label{obs:z-in-parallel}
If $\spnode$ is a $p$-node, then $z_{\spnode,i} (\vec{\mcflow}) = z_{\spnode_1,i} (\vec{\mcflow}) + z_{\spnode_2,i} (\vec{\mcflow})$.
\item\label{obs:z-in-serial}
If $\spnode$ is an $s$-node and $s_i,t_i \notin \innerV_{\spnode}$, then $z_{\spnode,i} (\vec{\mcflow}) = z_{\spnode_1,i} (\vec{\mcflow}) = z_{\spnode_2,i} (\vec{\mcflow})$.
\item\label{obs:z-in-serial-part2}
If $\spnode$ is an $s$-node and $\{s_i,t_i\}\cap\innerV_{\spnode}\neq\emptyset$, then~$z_{\spnode,i}(\vec{\mcflow})=1$ and $z_{\spnode_1,i}(\vec{\mcflow}),z_{\spnode_2,i}(\vec{\mcflow})\in\{0,1\}$.
\end{enumerate}
\end{observation}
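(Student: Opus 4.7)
The plan is to prove each of the three parts by unwinding the definition of $z_{\spnode,i}$, leveraging two structural properties of series-parallel digraphs: (i)~the inner nodes $\innerV_\spnode$ are separated from $G\setminus G_\spnode$, so every $G$-arc incident to $\innerV_\spnode$ lies in $E_\spnode$; and (ii)~$G_\spnode$ admits a topological order placing $u_\spnode$ first and $v_\spnode$ last, so every directed path inside $G_\spnode$ proceeds from the $u_\spnode$-side toward the $v_\spnode$-side. Taken together, these imply that $V_\spnode$ can be entered from outside only at $u_\spnode$ and exited only at $v_\spnode$.

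For part~(a), note that $\innerV_{\spnode_1},\innerV_{\spnode_2}\subseteq\innerV_\spnode$ and these are disjoint. When $s_i,t_i\notin\innerV_\spnode$, the hypothesis propagates to both children, so all three demand shares reduce to the flow formula, and the identity follows from the disjoint decomposition $\delta^+_\spnode(u_\spnode)=\delta^+_{\spnode_1}(u_\spnode)\sqcup\delta^+_{\spnode_2}(u_\spnode)$ (valid since $u_\spnode=u_{\spnode_1}=u_{\spnode_2}$ and $E_{\spnode_1}\cap E_{\spnode_2}=\emptyset$). Otherwise, WLOG $s_i\in\innerV_{\spnode_1}$ (the cases with $t_i$, or the sibling swapped, are symmetric), so $z_{\spnode,i}=z_{\spnode_1,i}=1$, and it remains to show $z_{\spnode_2,i}=0$. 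Decomposing commodity $i$'s flow into directed $s_i$-$t_i$-paths, I argue that no such path uses any arc of $E_{\spnode_2}$: entering $V_{\spnode_2}$ from $\innerV_{\spnode_1}$ can only happen at $u_\spnode$, which is ruled out by topological order, and arcs of $E_{\spnode_2}$ incident to $v_\spnode$ are all incoming.

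For part~(b), let $w=v_{\spnode_1}=u_{\spnode_2}$. The hypothesis $s_i,t_i\notin\innerV_\spnode=\innerV_{\spnode_1}\sqcup\{w\}\sqcup\innerV_{\spnode_2}$ makes all three demand shares flow-formula values, and $\delta^+_\spnode(u_\spnode)=\delta^+_{\spnode_1}(u_\spnode)$ since $u_\spnode\notin V_{\spnode_2}$. Flow conservation for commodity $i$ summed over the inner nodes of $G_{\spnode_1}$ equates the out-flow at $u_\spnode$ within $G_{\spnode_1}$ with the in-flow at $w$ within $G_{\spnode_1}$; conservation at $w$ (valid since $w\neq s_i,t_i$, and the $G$-arcs at $w$ are exactly $\delta^-_{\spnode_1}(w)\sqcup\delta^+_{\spnode_2}(w)$ by the separation property) equates the latter with the out-flow at $w$ within $G_{\spnode_2}$. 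For part~(c), the assumption places $s_i$ or $t_i$ in exactly one of $\innerV_{\spnode_1}$, $\{w\}$, or $\innerV_{\spnode_2}$; in each case, the topological-order argument from part~(a) shows that commodity $i$'s entire demand either routes wholly through or wholly avoids $G_{\spnode_j}$. Concretely, if $s_i$ is in $\innerV_{\spnode_1}$ or equals $w$, then no path uses arcs of $E_{\spnode_1}$ past $s_i$ unless $t_i$ lies in $V_{\spnode_1}$, and similarly all paths are forced through $w$ into $G_{\spnode_2}$ when $t_i\notin V_{\spnode_1}$; thus $z_{\spnode_j,i}\in\{0,1\}$.

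The main obstacle is the case analysis in parts~(a) and~(c) where a source or sink is buried inside one subcomponent and one must rule out that any flow of commodity $i$ strays into the sibling subcomponent. Nothing here is deep, but the argument hinges essentially on the topological-ordering property of series-parallel digraphs together with the separation of $\innerV_\spnode$ from the exterior, which is precisely what confines routing to the correct subcomponent.
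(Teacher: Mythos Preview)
The paper states this result as an observation and gives no proof, so there is no argument to compare against; your write-up supplies strictly more detail than the paper does. Your approach via the two structural facts (separation of $\innerV_\spnode$ from the exterior, and the $u_\spnode$-to-$v_\spnode$ topological order within each component) together with path decomposition of each commodity's flow is correct and is exactly what makes the observation immediate.

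One small point to tighten in part~(a): when you fix $s_i\in\innerV_{\spnode_1}$ and set out to show $z_{\spnode_2,i}=0$, you are implicitly using that the flow formula (not the default value~$1$) defines $z_{\spnode_2,i}$, i.e., that $t_i\notin\innerV_{\spnode_2}$. This is true, but it deserves a sentence: your own path argument shows that with $s_i\in\innerV_{\spnode_1}$ no directed $s_i$-$t_i$-path can reach $\innerV_{\spnode_2}$, so $t_i\in\innerV_{\spnode_2}$ would contradict the standing assumption that an $s_i$-$t_i$-path exists. With that noted, the argument is complete.
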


\section{The cut condition and integer multiflows}
\label{sec:cut-condition}

The following \emph{cut condition} is a necessary criterion for the existence of a feasible (fractional) multiflow (see, e.g.,~\cite[Chapter~70.7]{Schrijver03}):
\begin{align}
\label{eq:cut-condition}
c\bigl(\delta^+(X)\bigr)\geq\sum_{i\in[k]:s_i\in X\subseteq V\setminus\{t_i\}}d_i\qquad\text{for all~$X\subseteq V$.}
\end{align}
Here,~$\delta^+(X)$ denotes the set of arcs with tail in~$X$ and head in the complement~$V\setminus X$. To emphasize the underlying digraph~$G$, we sometimes write~$\delta^+_G(X)$ instead of~$\delta^+(X)$.
However, in series-parallel digraphs, the cut condition generally does not guarantee the existence of a feasible multiflow. The instance in \Cref{fig:cut-cond} is a counterexample, in which the cut condition~\eqref{eq:cut-condition} is satisfied, yet no feasible multiflow exists.
Note that the only~$s_1$--$t_1$~path in the given network passes through node~$s_2$, meaning that all flow of commodity~$1$ must be routed through~$s_2$.
We thus obtain an equivalent multiflow instance by subdividing commodity~$1$ into two subcommodities~$1_1$ and~$1_2$ with~$s_{1_1}=s_1$, $t_{1_1}=s_2=s_{1_2}$, and~$t_{1_2}=t_1$, where both subcommodities inherit the unit demand value of the original commodity, i.e.,~$d_{1_1}=d_{1_2}=d_1=1$. The resulting multiflow instance violates the cut condition since~$c(\delta^+(\{s_2\}))=1$ but commodities~$1_2$ and~$2$ both need to send one unit of flow across cut~$\delta^+(\{s_2\})$.
In the context of multiflows in undirected graphs, the trick of subdividing commodities was used, for example, in~\cite{chekuri2013flow,OkamuraSeymour1981}.

\subsubsection*{Aligned multiflow instances on series-parallel digraphs.}

We generalize the simple idea of subdividing commodities outlined above.
If, for some commodity~$i\in[k]$, there is a node~$v\in V\setminus\{s_i,t_i\}$ that is on every~$s_i$--$t_i$~path in~$G$, commodity~$i$ can be subdivided into two commodities~$i_1$ and~$i_2$ with~$s_{i_1}=s_i$, $t_{i_1}=v=s_{i_2}$, $t_{i_2}=t_i$, and~$d_{i_1}=d_{i_2}=d_i$. There is a straightforward one-to-one correspondence between multiflows for the original instance and multiflows for the subdivided instance. In particular, unsplittable multiflows for the original instance are mapped to unsplittable multiflows for the subdivided instance, and vice versa.
%
If we iteratively subdivide commodities as far as possible, we arrive at an equivalent multiflow instance on~$G$ that features a particular structure described in the following definition.

\begin{definition}
\label{def:aligned}
The commodities of a multiflow instance on a series-parallel digraph~$G$ with $sp$-tree~$T$ are \emph{aligned} with the series-parallel decomposition if, for every commodity~$i\in[k]$, there is a component~$G_\spnode$, $\spnode\in V_T$, with~$s_i=u_\spnode$ and~$t_i=v_\spnode$.
In this case, we also say that the multiflow instance is \emph{aligned}.
\end{definition}

Note that the choice of~$\spnode\in V_T$ for commodity~$i\in[k]$ in \Cref{def:aligned} is not necessarily unique. If, for example,~$\spnode$ is a~$p$-node, then its children have the same start and end node, and are thus alternative choices.
Whenever we refer to \emph{the node~$\spnode$ corresponding to commodity~$i$} in an aligned multiflow instance, we mean the unique node~$\spnode_i\in V_T$  with~$s_i=u_{\spnode_i}$ and~$t_i=v_{\spnode_i}$ for which all~$s_i$--$t_i$~paths in~$G$ lie in~$G_{\spnode_i}$ or, equivalently, which has minimum distance to the root node of~$T$.
Note that commodity~$i$ must then be routed entirely within~$G_{\spnode_i}$.

Any multiflow instance on a series-parallel digraph can be efficiently reduced to an aligned instance by repeatedly subdividing commodities.

\begin{observation}
\label{obs:aligned}
For a non-aligned multiflow instance on a series-parallel digraph, there is an~$i\in[k]$ and~$v\in V\setminus\{s_i,t_i\}$ such that~$v$ is on every~$s_i$--$t_i$~path.
\end{observation}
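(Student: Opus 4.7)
My plan is to pick any commodity $i \in [k]$ that is not aligned---such $i$ exists by hypothesis---and to locate the desired separating node by descending the $sp$-tree~$T$ of~$G$. Let $\spnode^{*} \in V_T$ be a node of maximum depth satisfying $s_i, t_i \in V_{\spnode^{*}}$, which is well-defined since the root of~$T$ trivially meets the condition. I will argue by cases on the type of~$\spnode^{*}$ that it must be an $s$-node, and then read off the desired $v$ from its series decomposition.

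First, if $\spnode^{*}$ were a $q$-leaf, then $V_{\spnode^{*}} = \{u_{\spnode^{*}}, v_{\spnode^{*}}\}$ and the existence of an $s_i$-$t_i$-path together with $s_i \neq t_i$ would force $s_i = u_{\spnode^{*}}$ and $t_i = v_{\spnode^{*}}$, making commodity~$i$ aligned at $\spnode^{*}$---contradicting the choice of~$i$. Next, if $\spnode^{*}$ were a $p$-node with children $\spnode_1, \spnode_2$, then depth-maximality combined with $V_{\spnode_1} \cap V_{\spnode_2} = \{u_{\spnode^{*}}, v_{\spnode^{*}}\}$ would place one of $s_i, t_i$ in $\innerV_{\spnode_1}$ and the other in $\innerV_{\spnode_2}$: a boundary endpoint $u_{\spnode^{*}}$ or $v_{\spnode^{*}}$ lies in both $V_{\spnode_1}$ and $V_{\spnode_2}$, so assigning it to either source or sink would let us descend further. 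I would then rule this configuration out using acyclicity: any directed walk from $\innerV_{\spnode_1}$ can only reach $v_{\spnode^{*}}$ within $V_{\spnode_1}$ (because $u_{\spnode^{*}}$ has no incoming arc inside $G_{\spnode^{*}}$), from where no arc in $G_{\spnode^{*}}$ continues, and any detour leaving $V_{\spnode^{*}}$ via $v_{\spnode^{*}}$ cannot be reversed in the acyclic graph. So no $s_i$-$t_i$-path in~$G$ exists, contradicting the standing assumption on the instance. Hence $\spnode^{*}$ must be an $s$-node with children $\spnode_1, \spnode_2$, and the forward direction of the series composition together with depth-maximality forces $s_i \in V_{\spnode_1}$, $t_i \in V_{\spnode_2}$, with neither equal to the gluing node $v_{\spnode_1} = u_{\spnode_2}$.

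Setting $v := v_{\spnode_1} = u_{\spnode_2}$, I finish by showing that every $s_i$-$t_i$-path in~$G$ traverses~$v$: the same acyclicity-based confinement argument shows that any such path stays inside $V_{\spnode^{*}} = V_{\spnode_1} \cup V_{\spnode_2}$, and since these two vertex sets meet only in~$v$, every such path crosses~$v$, yielding the required intermediate node with $v \in V \setminus \{s_i, t_i\}$. The main delicate step will be the acyclicity argument that both rules out the $p$-node case and confines $s_i$-$t_i$-paths to $V_{\spnode^{*}}$; both reduce to the inductive fact that in any series-parallel subdigraph $G_{\spnode}$ the designated start $u_{\spnode}$ has no incoming arc and the designated end $v_{\spnode}$ has no outgoing arc, which follows immediately by induction on~$T$. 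Everything else is a clean case analysis on the type of~$\spnode^{*}$.
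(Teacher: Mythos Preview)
Your proposal is correct and takes a genuinely different route from the paper's proof. The paper anchors on the \emph{source}: it picks $\spnode$ closest to the root with $u_\spnode=s_i$, and then splits into two cases according to whether $t_i\in V_\spnode$; if not, $v\coloneqq v_\spnode$ works, and if so, a second pass finds $\spnode'$ closest to the root with $v_{\spnode'}=t_i$ and sets $v\coloneqq u_{\spnode'}$. Your argument is instead symmetric in $s_i,t_i$: you descend to the deepest $\spnode^*$ containing both and case on its type, eliminating $q$- and $p$-nodes by contradiction before reading off the gluing node of the forced $s$-node. Both approaches rest on the same structural fact you isolate at the end (within any $G_\spnode$, $u_\spnode$ has in-degree~$0$ and $v_\spnode$ has out-degree~$0$), and both are short. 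Your version trades the paper's second descent for an extra case (the $p$-node elimination, which requires showing that no $s_i$-$t_i$-path exists at all), so neither is strictly simpler.

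One point to tighten when you write it out: the confinement claim ``any $s_i$-$t_i$-path stays inside $V_{\spnode^*}$'' in the $s$-node case needs the observation that $(s_i,t_i)=(u_{\spnode^*},v_{\spnode^*})$ would make $i$ aligned, so at least one endpoint is an inner node of $G_{\spnode^*}$. Without this, if $s_i=u_{\spnode^*}$ and the parent of $\spnode^*$ is a $p$-node, a path could in principle leave through the sibling component and re-enter at $v_{\spnode^*}$; your structural fact alone does not block that detour, but the non-alignment of $i$ guarantees $t_i\in\innerV_{\spnode^*}$ in that situation, and then the detour is indeed a dead end.
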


\begin{proof}
Consider a commodity~$i\in[k]$ that violates the condition in Definition~\ref{def:aligned}.
For some arc~$e\in\delta^+(s_i)$ and the corresponding leaf node~$\spnode\in V_T$ (i.e.,~$E_\spnode=\{e\}$), it holds that~$s_i=u_\spnode$. Now choose a node~$\spnode\in V_T$ with~$s_i=u_\spnode$ that has minimum distance from the root node of~$T$. If~$t_i\notin V_\spnode$, then we can set~$v\coloneqq v_\spnode$ since, by our choice of~$\spnode$, the end node~$v_\spnode$ is on every path that starts at~$s_i$ and leaves~$G_\spnode$. Otherwise,~$t_i\in\innerV_{\spnode}$ and we choose a node~$\spnode'\in V_T$ with~$v_{\spnode'}=t_i$ that has minimum distance from the root node of~$T$. Since~$t_i\in\innerV_{\spnode}$, node~$\spnode'$ must be a descendant of~$\spnode$ in~$T$. Moreover,~$s_i\neq u_{\spnode'}$ by choice of~$i$. Thus,~$s_i\notin V_{\spnode'}$, and we can set~$v\coloneqq u_{\spnode'}$ since, by our choice of~$\spnode'$, the start node~$u_{\spnode'}$ is on every path that starts outside of~$G_{\spnode'}$ and ends at~$t_i$.
\end{proof}

\subsubsection*{Solving aligned instances via single-commodity flows.}

A relaxation of a given multiflow instance can be obtained by allowing flow to be sent from any source~$s_i$ to any sink~$t_j$ where~$j$ is not necessarily equal to~$i$. This relaxation yields a single-commodity $b$-transshipment instance where
\[
b_v\coloneqq \sum_{i\in[k]:s_i=v}d_i-\sum_{i\in[k]:t_i=v}d_i.
\]
Then, for any multiflow~$\vec{\mcflow}$, its total flow~$\vec{\totalflow}$ is a~$b$-transshipment. In particular, for a feasible multiflow instance, the corresponding~$b$-transshipment instance is feasible as well. The opposite direction, however, is not true. Here is a simple counterexample on a unit-capacity series-parallel digraph:
\begin{center}
\begin{tikzpicture}
\node [CircleLight, label = left:{$d_1=1$}] (s1) at (-2.5,0) {\footnotesize$s_1$};
\node [draw, thick, ellipse, inner sep=0pt, minimum width = 5.5ex, minimum height=3ex,fill=gray!15] (t1) at (0,0) {\footnotesize$t_1,s_2$};
\node [CircleLight, label = right:{$d_2=2$}] (t2) at (2.5,0) {\footnotesize$t_2$};
\draw [->,thick] (s1) to (t1);
\draw [->,thick] (s1) to [out=20,in=160] (t2);
\draw [->,thick] (t1) to (t2);
\end{tikzpicture}
\end{center}
Note that any multiflow must send the entire demand~$d_2=2$ across the unit-capacity arc~$(s_2,t_2)$, and is thus infeasible. In the relaxation, however, the supply of value~$2$ at~$s_2$ together with the unit demand at the same node~$t_1=s_2$ results in a total supply of~$1$ that can be feasibly sent across arc~$(s_2,t_2)$, while~$s_1$ sends its unit supply via arc~$(s_1,t_2)$ to~$t_2$. What makes the~$b$-transshipment instance feasible is thus the canceling of supply and demand at node~$t_1=s_2$. But if we split this node into two nodes that are connected by an infinite capacity arc, we arrive at an equivalent (infeasible) multiflow instance on a series-parallel digraph, whose corresponding $b$-transshipment instance is infeasible as well:
\begin{center}
\begin{tikzpicture}
\node [CircleLight, label = left:{$d_1=1$}] (s1) at (-2.5,0) {\footnotesize$s_1$};
\node [CircleLight] (t1) at (-0.5,0) {\footnotesize$t_1$};
\node [CircleLight] (s2) at (0.5,0) {\footnotesize$s_2$};
\node [CircleLight, label = right:{$d_2=2$}] (t2) at (2.5,0) {\footnotesize$t_2$};
\draw [->,thick] (s1) to (t1);
\draw [->,thick] (s1) to [out=20,in=160] (t2);
\draw [->,thick] (t1) to node [above] {$\infty$} (s2);
\draw [->,thick] (s2) to (t2);
\end{tikzpicture}
\end{center}
The simple idea of splitting a node~$v$ that is both a source and a sink works in general. Given an aligned multiflow instance on a series-parallel digraph with such a node~$v$, we obtain an equivalent aligned multiflow instance on a modified series-parallel digraph as follows: split node~$v$ into two nodes~$v_1$ and~$v_2$ joined by an infinite capacity arc~$(v_1,v_2)$; the head node of every arc~$e\in\delta^-(v)$ is set to~$v_1$, and the tail node of every arc~$e\in\delta^+(v)$ is set to~$v_2$; moreover, for~$i\in[k]$, if~$t_i=v$, we set~$t_i\coloneqq v_1$, and if~$s_i=v$, we set~$s_i\coloneqq v_2$. Note that the resulting multiflow instance is indeed equivalent to the original one, the modified digraph is still series-parallel, and the new multiflow instance is still aligned.

\begin{lemma}
\label{lem:transshipment}
For an aligned multiflow instance on a series-parallel digraph with $s_i\neq t_j$ for all~$i\neq j$, every $b$-transshipment~$\vec{y}=(y_e)_{e\in E}$ can be efficiently turned into a multiflow~$\vec{\mcflow}$ whose total flow~$\vec{\totalflow}$ equals~$\vec{y}$.
\end{lemma}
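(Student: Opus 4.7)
The plan is to proceed by structural induction on the $sp$-tree~$T$ of~$G$. The base case is when~$G$ consists of a single arc~$e=(u,v)$: the aligned assumption forces every commodity to satisfy~$(s_i,t_i)=(u,v)$, the $b$-transshipment condition gives~$y_e=\sum_i d_i$, and setting~$\mcflow_{e,i}\coloneqq d_i$ produces the required multiflow.

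For the inductive step, let~$\omega_0$ be the root of~$T$ with children~$\omega_1,\omega_2$, and define the commodity groups $B^j\coloneqq\{i:\omega_i \text{ is a descendant of }\omega_j\}$ for~$j=1,2$ together with~$B^0\coloneqq\{i:\omega_i=\omega_0\}$. If~$\omega_0$ is an~$s$-node with shared node~$w=v_{\omega_1}=u_{\omega_2}$, then alignedness forces~$B^0=\emptyset$, so every commodity lies in~$B^1$ or~$B^2$. Together with the~$s_i\neq t_j$ assumption (which ensures~$w$ is either only a source, only a sink, or neither, ruling out cancellation of supply and demand at~$w$), a direct flow-conservation calculation shows that~$\vec{y}|_{E_{\omega_j}}$ is a~$b^j$-transshipment on~$G_{\omega_j}$ fully consistent with the demands of~$B^j$; in particular, the net out-flow~$\Delta_{\omega_j}$ at~$u_{\omega_j}$ is forced by the~$B^j$ demands. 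The inductive hypothesis applied to each~$G_{\omega_j}$ then yields the desired multiflow.

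If~$\omega_0$ is a~$p$-node, the~$B^0$ commodities, each with~$(s_i,t_i)=(u,v)$, can be routed through either subgraph. Letting~$\Delta_{\omega_j}$ denote the net out-flow at~$u$ in~$\vec{y}|_{E_{\omega_j}}$ and~$D^{j,u}$ the total demand of~$B^j$ commodities with source~$u$, I set $D^0_j\coloneqq\Delta_{\omega_j}-D^{j,u}$ as the amount of~$B^0$ demand to route through~$G_{\omega_j}$. I would then recursively invoke the lemma on each~$G_{\omega_j}$ with the commodity set~$B^j$ augmented by a single virtual through-commodity from~$u$ to~$v$ of demand~$D^0_j$, and finally redistribute the virtual commodity's flow proportionally among the actual~$B^0$ commodities to yield the full multiflow.

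The main obstacle is verifying that this split is valid, i.e., that $D^0_1,D^0_2\in[0,D^0]$ where~$D^0\coloneqq\sum_{i\in B^0}d_i$. The transshipment balance at~$u$ yields $\Delta_{\omega_1}+\Delta_{\omega_2}=b_u=D^{1,u}+D^{2,u}+D^0$, so it suffices to establish the lower bound $\Delta_{\omega_j}\geq D^{j,u}$ for both~$j$. I plan to derive this from the structure of~$\vec{y}|_{E_{\omega_j}}$ as a non-negative flow on the DAG~$G_{\omega_j}$: the demand at every internal sink arising from~$B^j$ commodities with source~$u$ must be supplied through an out-arc of~$u$, so a cut argument across $\delta^+_{\omega_j}(u)$ together with the aligned structure forces~$\Delta_{\omega_j}\geq D^{j,u}$. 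I expect this structural claim to be the crux of the argument and anticipate proving it by a further induction on the $sp$-structure of~$G_{\omega_j}$.
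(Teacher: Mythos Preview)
Your approach differs substantially from the paper's: you induct on the $sp$-tree structure, while the paper inducts on the number of commodities~$k$. The paper's argument is short: pick a commodity~$i$ whose node~$\omega_i$ has \emph{maximum} distance from the root. Then no other commodity has its~$\omega$ strictly below~$\omega_i$, so the inner nodes of~$G_{\omega_i}$ carry no supply or demand, and~$\vec{y}|_{G_{\omega_i}}$ is a pure~$u_{\omega_i}$-$v_{\omega_i}$-flow. Because~$s_i=u_{\omega_i}$ is not any~$t_j$, this flow has value at least~$d_i$; peel off an~$s_i$-$t_i$-flow~$\vec z\leq \vec y$ of value exactly~$d_i$, declare it to be commodity~$i$'s flow, and recurse on~$\vec y-\vec z$ with one fewer commodity. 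This completely sidesteps the splitting problem you set up.

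Your proposal has two issues. First, in the $s$-node case, alignedness does \emph{not} force~$B^0=\emptyset$: a commodity with~$(s_i,t_i)=(u,v)$ is aligned via the root~$\omega_0$ itself, so~$B^0$ may well be nonempty. This is repairable (pass each such commodity through~$w$ as a virtual pair~$(u,w)$ and~$(w,v)$ in the two recursive calls), but the claim as stated is false. Second, and more substantially, the $p$-node crux~$\Delta_{\omega_j}\geq D^{j,u}$ is genuinely non-trivial: a bare cut across~$\delta^+_{\omega_j}(u)$ does not suffice, since an inner-node sink~$t_i$ could in principle be fed from inner-node sources rather than from~$u$. The inequality \emph{is} true---the relevant structural fact is that for each~$i\in B^j$ with~$s_i=u$, the parent of~$\omega_i$ is necessarily an $s$-node with~$\omega_i$ as its first child, so every in-arc of~$t_i$ lies in~$G_{\omega_i}$---but turning this into the aggregate bound~$\sum_i d_i\leq\Delta_{\omega_j}$ still requires handling overlapping subgraphs~$G_{\omega_i}$, and the cleanest way to do that is essentially to reinvent the paper's ``peel off the deepest commodity'' step inside your structural induction. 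Your instinct that this is the crux is correct; the paper's insight is that processing commodities bottom-up by depth of~$\omega_i$ makes the crux disappear entirely.
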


\Cref{lem:transshipment} implies that aligned multiflow instances with $s_i\neq t_j$ for all~$i\neq j$ inherit the sufficiency of the cut condition as well as strong integrality properties from single-commodity flows. Moreover, as we argue above, any multiflow instance on a series-parallel digraph can be reduced to such an aligned instance in a straightforward manner. This concludes the proof of \Cref{thm:integrality}.

\begin{proof}[Proof of \Cref{lem:transshipment}]
We prove the existence of multiflow~$\vec{\mcflow}$ by induction on the number of commodities~$k$. The proof is constructive and yields an efficient algorithm for finding~$\vec{\mcflow}$. The case~$k=1$ is trivial since~$\vec{y}$ is an~$s_1$--$t_1$~flow of value~$d_1$, and thus also a multiflow.

If~$k>2$, let~$i\in[k]$ be a commodity such that the corresponding node~$\spnode\coloneqq\spnode_i$ in~$T$ has maximum distance from the root of~$T$. Then, there is no commodity~$j\in[k]$ such that~$\spnode_j$ is a descendant of~$\spnode$. Therefore,~$\innerV_{\spnode}$ does not contain a source or sink of any commodity. In particular, the restriction of~$\vec{y}$ to~$G_{\spnode}$ is an~$u_{\spnode}$--$v_{\spnode}$~flow. Since by assumption~$s_i=u_{\spnode}\neq t_j$ for all~$j\in[k]$, the value of this~$u_{\spnode}$--$v_{\spnode}$~flow is at least~$d_i$. We can therefore find an~$u_{\spnode}$--$v_{\spnode}$~flow~$\vec{z}$ in~$G_{\spnode}$ of value~$d_i$ such that~$z_e\leq y_e$ for all~$e\in E_{\spnode}$. Then,~$\vec{y'}\coloneqq\vec{y}-\vec{z}$ is a~$b'$-transshipment corresponding to the reduced multiflow instance that is obtained by deleting commodity~$i$. By induction, there is a multiflow~$\vec{\mcflow'}$ for this reduced instance whose total flow~$\vec{\totalflow'}$ equals~$\vec{y'}$. Complementing~$\vec{\mcflow'}$ with~$\vec{z}$ as the flow of commodity~$i$ yields the desired multiflow~$\vec{\mcflow}$ with total flow~$\vec{\totalflow}=\vec{y}$.
\end{proof}

\subsubsection*{Sufficiency of a strengthened cut condition}

As discussed above, the classical cut condition~\eqref{eq:cut-condition} is, in general, not sufficient to guarantee the existence of a feasible multiflow in a series-parallel digraph~$G=(V,E)$; see the example in \Cref{fig:cut-cond}. A natural strengthening of the classical cut condition considers, for any subset of arcs~$F\subseteq E$, the set of commodities
\[
K(F)\coloneqq\{i\in[k]\mid\text{there is no~$s_i$--$t_i$~path in the subdigraph~$(V,E\setminus F)$}\}.
\]
By definition, the set~$F$ forms a \emph{multicut} for the commodities in~$K(F)$; that is, none of these commodities can be routed after removing the arcs in~$F$. In any feasible multiflow, the arcs in~$F$ must therefore carry at least the total demand of all commodities in~$K(F)$. This yields the following \emph{strong cut condition}:
\begin{align}
\label{eq:strong-cut-condition}
c(F)\geq\sum_{i\in K(F)}d_i\qquad\text{for all~$F\subseteq E$.}
\end{align}
This strong cut condition is clearly necessary for the existence of a feasible multiflow.

In the example shown in \Cref{fig:cut-cond}, the classical cut condition~\eqref{eq:cut-condition} is satisfied, but the strong cut condition~\eqref{eq:strong-cut-condition} is violated for the arc set~$F=\{(s_2,t_1)\}$: here,~$c(F)=1$, while the total demand of the blocked commodities~$K(F)=\{1,2\}$ equals~$d_1+d_2=2$. Notably, in this example the violating arc set~$F$ is of the form~$\delta^+(X)$ for~$X=\{s_2\}\subseteq V$.

In fact, as we show in \Cref{thm:strengthened-cut-condition}, for any infeasible multiflow instance on a series-parallel digraph, there always exists a node subset~$X\subseteq V$ such that the strong cut condition is violated for~$F=\delta^+(X)$. This observation motivates a further refinement: the following \emph{strengthened cut condition}
\begin{align}
\label{eq:strengthened-cut-condition}
c\bigl(\delta^+(X)\bigr)\geq\sum_{i\in K(\delta^+(X))}d_i\qquad\text{for all~$X\subseteq V$,}
\end{align}
is stronger than the classical cut condition~\eqref{eq:cut-condition} but weaker than the strong cut condition~\eqref{eq:strong-cut-condition}. In \Cref{fig:strengthened-vs-strong} we provide an example of an infeasible multiflow instance on an acyclic digraph that is not series-parallel, which satisfies the strengthened cut condition~\eqref{eq:strengthened-cut-condition} but violates the strong cut condition~\eqref{eq:strong-cut-condition}.
\begin{figure}[t]
\centering
\begin{tikzpicture}
\node [CircleLight, label = left:{\footnotesize$d_1=2$}] (s1) at (-3,0) {\footnotesize$s_1$};
\node [CircleLight] (m) at (0,0) {};
\node [CircleLight, label = right:{\footnotesize$d_2=2$}] (t2) at (3,0) {\footnotesize$t_2$};

\node [CircleLight] (e1) at (-1,-1.5) {};
\node [CircleLight] (e2) at (1,-1.5) {};

\node [CircleLight] (s2) at (-2,-0.75) {\footnotesize$s_2$};
\node [CircleLight] (t1) at (2,-0.75) {\footnotesize$t_1$};

\draw [->,thick] (s1) to (m);
\draw [->,thick] (m) to (t2);
\draw [->,thick] (e1) to (e2);
\draw [->,thick,dashed] (s1) to [out=-90,in=180] (e1);
\draw [->,thick,dashed] (e2) to [out=0,in=-90] (t2);
\draw [->,thick,dashed] (s2) to (e1);
\draw [->,thick,dashed] (e2) to (t1);
\draw [->,thick,dashed] (s2) to (m);
\draw [->,thick,dashed] (m) to (t1);
\end{tikzpicture}
\caption{An infeasible multiflow instance on an acyclic digraph. Solid arcs have unit capacity; dashed arcs have infinite capacity. The strong cut condition~\eqref{eq:strong-cut-condition} is violated only for the arc set~$F=\{\text{solid arcs}\}$. However, since this arc set is not of the form~$\delta^+(X)$ for any~$X\subseteq V$, the strengthened cut condition~\eqref{eq:strengthened-cut-condition} remains satisfied.}
\label{fig:strengthened-vs-strong}
\end{figure}

\begin{theorem}\label{thm:strengthened-cut-condition}
For multiflow instances on series-parallel digraphs, the strengthened cut condition~\eqref{eq:strengthened-cut-condition} is necessary and sufficient for the existence of a feasible multiflow.
\end{theorem}

\begin{proof}
The strengthened cut condition~\eqref{eq:strengthened-cut-condition} is obviously necessary for the existence of a feasible multiflow. It remains to show that for every infeasible multiflow instance on a series-parallel digraph~$G=(V,E)$, at least one of the inequalities in~\eqref{eq:strengthened-cut-condition} is violated. We refer to the relaxation obtained by eliminating all commodities with source~$u_0$ (start node of~$G$) and sink~$v_0$ (end node of~$G$) as the \emph{reduced instance on~$G$}. To establish \Cref{thm:strengthened-cut-condition}, we in fact prove a stronger statement that directly implies it.

\begin{claim}
For every infeasible multiflow instance on a series-parallel digraph~$G=(V,E)$, there exists a subset~$X\subseteq V$ such that the corresponding inequality in~\eqref{eq:strengthened-cut-condition} is violated. Moreover, the violating set~$X$ can be chosen such that the following property holds:~$X \cap \{u_0,v_0\} = \{u_0\}$ if and only if the reduced instance on~$G$ is feasible.
\end{claim}

We prove the claim by induction on the number of arcs in~$G$. For the base case~$|E|=1$, the digraph consists of a single arc~$(u_0,v_0)$ whose capacity is insufficient to support the total demand of all commodities from~$u_0$ to~$v_0$. Hence, the strengthened cut condition~\eqref{eq:strengthened-cut-condition} is violated for~$X=\{u_0\}$. Note that, in this case, the \emph{reduced} instance on~$G$ is feasible.

If~$|E|\geq 2$, then, as described in \Cref{sec:preliminaries}, the digraph~$G$ is either a series or a parallel composition of two series-parallel digraphs~$G_1=(V_1,E_1)$ and~$G_2=(V_2,E_2)$, each with strictly fewer arcs than~$G$. As in \Cref{sec:preliminaries}, for~$j=1,2$, we denote the start and end nodes of~$G_j$ by~$u_j$ and~$v_j$, respectively.

\paragraph{Case~1.} If~$G$ is a \emph{series} composition of~$G_1$ and~$G_2$ (note that~$v_1=u_2$ in this case), then the multiflow instance on~$G$ naturally decomposes into two independent subinstances, one on~$G_1$ and one on~$G_2$, as follows: Every commodity whose source and sink both lie in~$V_1$ (respectively, in~$V_2$) is assigned to the subinstance on~$G_1$ (respectively, on~$G_2$). All remaining commodities have their source in~$V_1\setminus\{v_1\}$ and their sink in~$V_2\setminus\{u_2\}$. These commodities are split across the two subinstances: each is assigned to the subinstance on~$G_1$ with modified sink node~$v_1$, and to the subinstance on~$G_2$ with modified source node~$u_2$. Note that a feasible multiflow on~$G$ corresponds to a pair of feasible multiflows on~$G_1$ and~$G_2$. In particular, the original multiflow instance on~$G$ is feasible if and only if both subinstances are feasible. We may assume that the subinstance on~$G_1$ is infeasible; the case where~$G_2$ is infeasible is symmetric.

By the induction hypothesis, there exists a subset~$X \subseteq V_1$ such that the corresponding inequality in~\eqref{eq:strengthened-cut-condition} is violated for the subinstance on~$G_1$. If the reduced instance on~$G$ is feasible, then also the reduced subinstance on~$G_1$ must be feasible and, by induction,~$X$ can be chosen such that~$X \cap \{u_1,v_1\}=\{u_1\}$. Therefore~$\delta^+_{G_1}(X)=\delta^+_G(X)$ and this set of arcs separates a commodity in~$G_1$ if and only if it separates the corresponding commodity in~$G$. Hence, the set~$X$ also yields a violated inequality for the original instance on~$G$, and~$X\cap\{u_0,v_0\}=\{u_0\}$.

If, on the other hand, already the reduced instance on~$G$ is infeasible, we consider the set~$X\cup (V_2 \setminus \{u_2\})$. Observe that
\[
\delta^+_{G_1}(X) = \delta^+_G\bigl(X \cup (V_2 \setminus \{u_2\})\bigr).
\]
Again, this set of arcs separates a commodity in~$G_1$ if and only if it separates the corresponding commodity in~$G$. Therefore, the inequality in~\eqref{eq:strengthened-cut-condition} associated with~$X \cup (V_2 \setminus \{u_2\})$ is violated in the original instance. Moreover, also the additional property stated in the claim is satisfied since~$v_0=v_2\in X \cup (V_2 \setminus \{u_2\})$.

\paragraph{Case~2.} 
If~$G$ is a \emph{parallel} composition of~$G_1$ and~$G_2$, then~$u_0=u_1=u_2$ and~$v_0=v_1=v_2$. We first consider the situation in which already the reduced instance on~$G$ is infeasible. The reduced instance on~$G$ naturally decomposes into two independent (reduced) subinstances on~$G_1$ and~$G_2$, since each commodity must be routed entirely within either~$G_1$ or~$G_2$. Consequently, if the reduced instance on~$G$ is infeasible, then at least one of the two subinstances must be infeasible. Without loss of generality, assume that infeasibility arises in~$G_1$. By the induction hypothesis, there exists a subset~$X\subseteq V_1$ with \mbox{$X\cap\{u_1,v_1\}\neq\{u_1\}$} such that the corresponding inequality in~\eqref{eq:strengthened-cut-condition} is violated for the subinstance on~$G_1$. If~\mbox{$u_1\notin X$}, then~$\delta^+_{G_1}(X)=\delta^+_G(X)$ and, also for the original instance on~$G$, the inequality in~\eqref{eq:strengthened-cut-condition} corresponding to~$X$ is violated. Otherwise, if~$u_1\in X$, then also~$v_1\in X$ and~$\delta^+_{G_1}(X)=\delta^+_G(X\cup V_2)$ such that the inequality corresponding to~$X\cup V_2$ in~\eqref{eq:strengthened-cut-condition} is violated for the instance on~$G$. Moreover, also~$X\cup V_2$ contains both~$u_0=u_1$ and~$v_0=v_1$ such that the additional property stated in the claim is fulfilled.

It remains to consider the situation where the reduced instance on~$G$ is feasible but the full instance is infeasible. In particular, there are commodities with source~$u_0$ and sink~$v_0$ and a total demand of~$D>0$. Without loss of generality, we combine these commodities into a single commodity, denoted as commodity~$0$, with demand~$d_0\coloneqq D$. For~$j=1,2$, let~$\delta_j\geq0$ denote the maximum amount of flow that a feasible multiflow can send from~$u_1=u_0$ to~$v_j=v_0$ in addition to the commodities of the (reduced) subinstance on~$G_j$. Observe that~$\delta_1+\delta_2<d_0$ since otherwise there is a feasible multiflow for the full instance on~$G$. Let~$\varepsilon\coloneqq (d_0-\delta_1-\delta_2)/2>0$. Then, for~$j=1,2$, adding an $u_j$--$v_j$~commodity of demand~$\delta_j+\varepsilon$ to the (reduced) subinstance on~$G_j$ results in an infeasible instance on~$G_j$. By the induction hypothesis, there are subsets~$X_1\subseteq V_1$ and~$X_2\subseteq V_2$, both containing~$u_0$ but not~$v_0$, such that the corresponding inequalities~\eqref{eq:strengthened-cut-condition} are violated for these instances on~$G_1$ and~$G_2$, respectively. We set~$X\coloneqq X_1\cup X_2$ and observe that~$\delta^+_{G}(X)$ is the disjoint union of~$\delta^+_{G_1}(X_1)$ and~$\delta^+_{G_2}(X_2)$. Therefore,
\begin{align*}
c\bigl(\delta^+_{G}(X)\bigr)
&=c\bigl(\delta^+_{G_1}(X_1)\bigr)+c\bigl(\delta^+_{G_2}(X_2)\bigr)\\
&<(\delta_1+\varepsilon)+\sum_{i\in K(\delta^+_{G_1}(X_1))}d_i + (\delta_2+\varepsilon) + \sum_{i\in K(\delta^+_{G_2}(X_2))}d_i = \sum_{i\in K(\delta^+_G(X))}d_i,
\end{align*}
where the last equation follows since~$K\bigl(\delta^+_G(X)\bigr)$ is the disjoint union of the sets of commodities~$K\bigl(\delta^+_{G_1}(X_1)\bigr)$, $K\bigl(\delta^+_{G_2}(X_2)\bigr)$, and~$\{0\}$ with~$d_0=(\delta_1+\varepsilon)+(\delta_2+\varepsilon)$.
Thus, the inequality corresponding to~$X$ in~\eqref{eq:strengthened-cut-condition} is violated for the multiflow instance on~$G$. Also the additional property stated in the claim is fulfilled since~$X$ contains~$u_0$ but not~$v_0$.

This concludes the proof of the claim and thus of \Cref{thm:strengthened-cut-condition}.
\end{proof}

As a consequence of \Cref{thm:strengthened-cut-condition}, for series-parallel digraphs, the strengthened cut condition~\eqref{eq:strengthened-cut-condition} implies the strong cut condition~\eqref{eq:strong-cut-condition} and both can be efficiently separated. In this context it is interesting to mention that finding a subset of arcs~$F\subseteq E$ of minimum total capacity~$c(F)$ with~$K(F)=[k]$ is known to be NP-hard~\cite{GargVY04}. Majthoub Almoghrabi recently proved that the problem is NP-hard even on series-parallel digraphs~\cite{Diss-Mohammed}.

\begin{remark}
It may be tempting to think that \Cref{thm:strengthened-cut-condition} follows easily from \Cref{thm:integrality}\,\eqref{thm:integrality:b} or, more precisely, from \Cref{lem:transshipment}. Indeed, for the special case of \emph{aligned} multiflow instances, the classical cut condition~\eqref{eq:cut-condition} already suffices to guarantee the existence of a feasible multiflow, as briefly discussed after \Cref{lem:transshipment}. Although any multiflow instance can be reduced to an aligned one, this reduction introduces complications regarding all three cut conditions~\eqref{eq:cut-condition},~\eqref{eq:strong-cut-condition}, and~\eqref{eq:strengthened-cut-condition}, due to the subdivision of commodities. After subdividing a commodity, its demand may contribute multiple times to the right-hand side of these inequalities, a phenomenon that the cut conditions do not account for in the original, non-aligned instance.
\end{remark}

It is not difficult to observe that the Nagamochi--Ibaraki Theorem~\cite{NagamochiIbaraki1989} (see
also~\cite[Theorem~70.8]{Schrijver03}), also holds if we replace the classical cut condition~\eqref{eq:cut-condition} with the
strengthened cut condition~\eqref{eq:strengthened-cut-condition} or with the strong
cut condition~\eqref{eq:strong-cut-condition}. That is, if, for a fixed digraph and any integer arc capacities and demands, the
\emph{strengthened} cut condition~\eqref{eq:strengthened-cut-condition} or the \emph{strong} cut condition~\eqref{eq:strong-cut-condition} guarantees the existence of a feasible multiflow, then it also guarantees the
existence of a feasible integer multiflow. In view of \Cref{thm:strengthened-cut-condition}, this observation sheds new light on our integrality result in \Cref{thm:integrality}\,\eqref{thm:integrality:a}.


\section{Almost unsplittable flows}
\label{sec:almost-unsplittable}

Before we prove out main result \cref{thm:main}, we first observe that we can turn any multiflow~$\vec{X}$ into an \emph{almost unsplittable} multiflow with the same demand. Formally, we say that a multiflow~$\vec{\mcflow}$ in~$G$ is \emph{almost unsplittable} if at most two commodities are routed fractionally through every subdigraph~$G_{\spnode}$, $\spnode\in V_T$, and the two components in a parallel composition may share at most one fractional commodity.

\begin{definition}
\label{def:almost-unsplittable}
A multiflow~$\vec{\mcflow}$ in~$G$ is \emph{almost unsplittable} if it satisfies:
\begin{enumerate}[(i)]\itemsep0.4ex
\item\label{def:almost-unsplittable:i}
for every node~$\spnode\in V_T$ of the $sp$-tree~$T$, we have $|\fractionalDemands_{\spnode} (\vec{\mcflow})| \leq 2$;
\item\label{def:almost-unsplittable:ii}
$|\fractionalDemands_{\spnode_1}(\vec{\mcflow})\cap\fractionalDemands_{\spnode_2}(\vec{\mcflow})|\leq 1$ for every $p$-node with two children~$\spnode_1,\spnode_2\in V_T$.
If~$i \in \fractionalDemands_{\spnode_1}(\vec{\mcflow})\cap\fractionalDemands_{\spnode_2}(\vec{\mcflow})$, we say that commodity~$i$ is \emph{split between~$\spnode_1$ and~$\spnode_2$}.
\end{enumerate}
\end{definition}

It turns out that any multiflow can be turned into an almost unsplittable multiflow without changing its total flow.

\begin{theorem}
\label{thm:almost-unsplittable}
Any multiflow in~$G$ can efficiently be turned into an almost unsplittable multiflow that satisfies the same demands and has the same total flow.
\end{theorem}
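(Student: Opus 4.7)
The plan is to prove \cref{thm:almost-unsplittable} by \emph{constructing} an almost unsplittable multiflow with the given total flow $\vec{\totalflow}$ top-down along the $sp$-tree $T$, rather than by modifying the given multiflow $\vec{\mcflow}$ directly. First I would reduce to an aligned multiflow instance via the subdivision procedure from \cref{sec:cut-condition}; a subdivided instance admits an almost unsplittable multiflow if and only if the original one does. Keeping $\vec{\totalflow}$ fixed throughout, the recursion assigns shares $z_{\spnode,i}$ to every pair $(\spnode,i)$, starting from $z_{\text{root},i} = 1$ at the root, and reads off the multiflow at the leaves via $\mcflow_{e,i} \coloneqq z_{\spnode,i}\,d_i$ where $\spnode$ is the leaf representing arc $e$. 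Flow conservation for every commodity and consistency with $\vec{\totalflow}$ at every internal node then follow automatically from \cref{obs:z-in-series-parallel}.

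At an $s$-node $\spnode$ with children $\spnode_1,\spnode_2$, \cref{obs:z-in-series-parallel} forces $z_{\spnode_j,i}$ either to equal $z_{\spnode,i}$ or to be integer in $\{0,1\}$, so $\fractionalDemands_{\spnode_j}(\vec{\mcflow}) = \fractionalDemands_\spnode(\vec{\mcflow})$ and both parts of \cref{def:almost-unsplittable} carry over without further work. At a $p$-node $\spnode$, however, the problem reduces to a one-dimensional fractional-knapsack / transportation problem: for each commodity $i$, pick $z_{\spnode_j,i} \in [0, z_{\spnode,i}]$ with $z_{\spnode_1,i} + z_{\spnode_2,i} = z_{\spnode,i}$, subject to the two aggregate constraints $\sum_i z_{\spnode_j,i}\,d_i = f_j$, where $f_j$ is the total flow through $G_{\spnode_j}$ read off from $\vec{\totalflow}$. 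The classical greedy structure of fractional knapsack guarantees a solution in which at most one commodity is split between the two bins, which immediately delivers condition~(ii) of \cref{def:almost-unsplittable}.

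To maintain condition~(i) of \cref{def:almost-unsplittable} under the top-down invariant $|\fractionalDemands_\spnode(\vec{\mcflow})| \leq 2$, I would choose the split commodity from $\fractionalDemands_\spnode(\vec{\mcflow})$ whenever that set is non-empty. In this case, $\fractionalDemands_{\spnode_j}(\vec{\mcflow})$ consists of the split commodity together with the remaining members of $\fractionalDemands_\spnode(\vec{\mcflow})$ routed entirely through $\spnode_j$, giving $|\fractionalDemands_{\spnode_j}(\vec{\mcflow})| \leq |\fractionalDemands_\spnode(\vec{\mcflow})| \leq 2$. The principal obstacle is feasibility of this \emph{controlled} split: splitting a specific $a \in \fractionalDemands_\spnode(\vec{\mcflow})$ requires a subset $S$ of the other commodities with $\sum_{i \in S} z_{\spnode,i}\,d_i \in (f_1 - z_{\spnode,a}\,d_a,\; f_1]$, a subset-sum condition that can fail for every $a \in \fractionalDemands_\spnode(\vec{\mcflow})$.

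The hard part of the proof is a case analysis handling this obstruction. When no controlled split exists, the fallback is to split a unit-share commodity; in that case one must arrange for the two elements of $\fractionalDemands_\spnode(\vec{\mcflow})$ to land in \emph{different} children, so that each child still satisfies $|\fractionalDemands_{\spnode_j}(\vec{\mcflow})| \leq 2$. I would handle this by comparing two natural greedy orderings---placing the $\fractionalDemands_\spnode$-items first or last when greedily filling bin~$1$---and arguing from the infeasibility of the controlled split that the separated fallback must succeed. Induction over the levels of $T$ then yields an almost unsplittable multiflow with the prescribed total flow, and the construction is efficient since each $p$-node step is a single greedy pass through the commodities.
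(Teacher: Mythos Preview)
Your approach is genuinely different from the paper's and is essentially correct, but the ``hard case'' you flag is both easier than you suggest and not quite handled by the two orderings you propose.

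\medskip

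\textbf{Comparison with the paper.} The paper does \emph{not} rebuild the multiflow from scratch. Instead it modifies the given multiflow locally via a swapping lemma (\cref{lem:swap}): at a $p$-node $\bar\spnode$ with children $\spnode_1,\spnode_2$, pick $i_1\in\fractionalDemands_{\spnode_1}$ and $i_2\in\fractionalDemands_{\spnode_2}$, and repeatedly swap flow of $i_1$ and $i_2$ along flow-carrying $u_{\bar\spnode}$-$v_{\bar\spnode}$-paths in $G_{\spnode_1}$ and $G_{\spnode_2}$ until one of them vanishes from one child. This preserves the total flow and never touches anything outside $G_{\bar\spnode}$, so properties already established at ancestors survive. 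The top-down case analysis is then short: first reduce $|\fractionalDemands_{\spnode_1}\cap\fractionalDemands_{\spnode_2}|$ to at most one via swaps, and if one child still has three fractional commodities, one further swap fixes it. No aligned reduction, no knapsack, no subset-sum reasoning is needed.

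\medskip

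\textbf{The gap in your argument.} Your two proposed greedy orderings---placing both $\fractionalDemands_\spnode$-items first, or both last---do \emph{not} separate $a$ and $b$ in general. With $\fractionalDemands_\spnode=\{a,b\}$, ordering $a,b,c_1,\dots,c_m$ and filling bin~$1$ of capacity $g_1$: if $v_a+v_b<g_1$ then both $a,b$ land in $\spnode_1$ and the split commodity is some $c_k$, giving $|\fractionalDemands_{\spnode_1}|=3$. Symmetrically, ordering $c_1,\dots,c_m,a,b$: if $\sum_k v_{c_k}\geq g_1$ then both $a,b$ land in $\spnode_2$. So ``arguing from the infeasibility of the controlled split'' does not rescue these particular orderings.

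The fix is immediate and removes the need for any case analysis: put $a$ \emph{first} and $b$ \emph{last}, with the unit-share commodities in between. Then the greedy fill of bin~$1$ always yields a valid configuration:
\begin{itemize}
  \item if the split falls on $a$ or $b$, the split commodity is in $\fractionalDemands_\spnode$ and both children have at most two fractional commodities;
  \item otherwise the split falls on some $c$ strictly between $a$ and $b$ in the order, so $a\to\spnode_1$, $b\to\spnode_2$, and $\fractionalDemands_{\spnode_1}\subseteq\{a,c\}$, $\fractionalDemands_{\spnode_2}\subseteq\{c,b\}$.
\end{itemize}
Either way, both conditions of \cref{def:almost-unsplittable} hold at $\spnode_1,\spnode_2$. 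With this single ordering the induction goes through cleanly; you can also drop the aligned reduction, since forced commodities (those with a terminal inside one child) contribute identically to both the original and the new flow and can simply be subtracted from the bin capacities before running the greedy.
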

The proof of \Cref{thm:almost-unsplittable} relies on a simple swapping operation that, for given children of a common $p$-node, exchanges flow of two commodities between the corresponding parallel components of~$G$, and is described in the following lemma.

\begin{lemma}
\label{lem:swap}
Consider a multiflow~$\vec{\mcflow}$, a $p$-node~$\bar\spnode\in V_T$ with child nodes~$\spnode_1,\spnode_2$, and two distinct commodities~$i_1\in\fractionalDemands_{\spnode_1}(\vec{\mcflow})$ and~$i_2\in\fractionalDemands_{\spnode_2}(\vec{\mcflow})$. Commodities~$i_1,i_2$ can be rerouted within~$G_{\bar\spnode}$ to obtain a multiflow~$\tilde{\vec{\mcflow}}$ with the same total flow, such that~$z_{\spnode_1,i_1}(\tilde{\vec{\mcflow}})=0$ or~$z_{\spnode_2,i_2}(\tilde{\vec{\mcflow}})=0$.
\end{lemma}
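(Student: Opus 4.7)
The plan is to produce $\tilde{\vec{\mcflow}}$ by an explicit flow-swap between the two parallel subdigraphs $G_{\spnode_1}$ and $G_{\spnode_2}$. First I would pin down the two-terminal structure of the flows involved. Since $i_1\in\fractionalDemands_{\spnode_1}(\vec{\mcflow})$, the definition of the demand share yields $\{s_{i_1},t_{i_1}\}\cap\innerV_{\spnode_1}=\emptyset$. I also claim $\{s_{i_1},t_{i_1}\}\cap\innerV_{\spnode_2}=\emptyset$: a series-parallel digraph is acyclic and the only ports between $G_{\spnode_2}$ and the rest of $G$ are $u_{\bar\spnode}$ and $v_{\bar\spnode}$, so any $i_1$-flow starting or ending inside $\innerV_{\spnode_2}$ would be confined to $G_{\spnode_2}\cup(G\setminus G_{\bar\spnode})$ and could never visit $G_{\spnode_1}$, contradicting $z_{\spnode_1,i_1}>0$. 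Consequently, the restriction $f^{(1)}$ of the $i_1$-flow to $E_{\spnode_1}$ is a $u_{\bar\spnode}$-$v_{\bar\spnode}$-flow of value $a_1\coloneqq z_{\spnode_1,i_1}\,d_{i_1}>0$, and analogously the restriction $f^{(2)}$ of the $i_2$-flow to $E_{\spnode_2}$ is a $u_{\bar\spnode}$-$v_{\bar\spnode}$-flow of value $a_2\coloneqq z_{\spnode_2,i_2}\,d_{i_2}>0$.

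Next I would assume without loss of generality that $a_1\leq a_2$ (otherwise swap the roles of $(i_1,\spnode_1)$ and $(i_2,\spnode_2)$) and define $\tilde{\vec{\mcflow}}$ to agree with $\vec{\mcflow}$ on every arc outside $G_{\bar\spnode}$ and on every commodity $j\notin\{i_1,i_2\}$, while inside $G_{\bar\spnode}$ setting
\[
\tilde{\mcflow}_{e,i_1}=\mcflow_{e,i_1}-f^{(1)}_e,\quad \tilde{\mcflow}_{e,i_2}=\mcflow_{e,i_2}+f^{(1)}_e\quad\text{for }e\in E_{\spnode_1},
\]
\[
\tilde{\mcflow}_{e,i_1}=\mcflow_{e,i_1}+\tfrac{a_1}{a_2}f^{(2)}_e,\quad \tilde{\mcflow}_{e,i_2}=\mcflow_{e,i_2}-\tfrac{a_1}{a_2}f^{(2)}_e\quad\text{for }e\in E_{\spnode_2}.
\]
In words, the $i_1$-flow is fully evacuated from $G_{\spnode_1}$ and rerouted along an $a_1/a_2$ fraction of the former $i_2$-paths in $G_{\spnode_2}$, while the displaced $i_2$-flow reappears along the vacated $i_1$-paths in $G_{\spnode_1}$.

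The verification would then be routine. Nonnegativity on $E_{\spnode_1}$ holds because the $i_1$-flow there is exactly $f^{(1)}$ and cancels, while on $E_{\spnode_2}$ the new $i_2$-flow is $(1-a_1/a_2)f^{(2)}\geq 0$. The total arc flow is preserved since the two modifications cancel arc by arc. For flow conservation of commodity~$i_1$ the perturbation decomposes into $-f^{(1)}$ (a balanced $u_{\bar\spnode}$-$v_{\bar\spnode}$-flow in $G_{\spnode_1}$) plus $+(a_1/a_2)f^{(2)}$ (a balanced $u_{\bar\spnode}$-$v_{\bar\spnode}$-flow in $G_{\spnode_2}$); the net change in outflow at $u_{\bar\spnode}$ is $-a_1+a_1=0$ and the net change in inflow at $v_{\bar\spnode}$ is $-a_1+a_1=0$, so the excess at every node is preserved. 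The argument for $i_2$ is symmetric, and $z_{\spnode_1,i_1}(\tilde{\vec{\mcflow}})=0$ by construction.

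The main obstacle, though brief, is the first step: ruling out $s_{i_1},t_{i_1}\in\innerV_{\bar\spnode}$ so that $f^{(1)}$ genuinely is a $u_{\bar\spnode}$-$v_{\bar\spnode}$-flow. Without this, the rescaled swap in $G_{\spnode_2}$ could violate flow conservation at a commodity endpoint hidden inside $\innerV_{\spnode_2}$. Once this structural point is secured, the remainder of the proof is linear bookkeeping that follows directly from the fact that $\innerV_{\spnode_1}$ and $\innerV_{\spnode_2}$ are mutually disconnected inside $G_{\bar\spnode}$ except through its two boundary nodes.
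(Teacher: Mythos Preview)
Your proof is correct and takes a genuinely different route from the paper's.  The paper argues iteratively: while both $i_1$ and $i_2$ still carry flow through $G_{\spnode_1}$ and $G_{\spnode_2}$ respectively, it picks a single flow-carrying $u_{\bar\spnode}$-$v_{\bar\spnode}$-path $P_1$ for $i_1$ in $G_{\spnode_1}$ and one $P_2$ for $i_2$ in $G_{\spnode_2}$, and swaps the bottleneck amount $\delta=\min_{j}\min_{e\in P_j}\mcflow_{e,i_j}$ between them.  Each iteration kills at least one flow-carrying arc, so the process terminates after $|E_{\bar\spnode}|$ steps.  Your approach instead performs the entire swap in one shot by scaling: remove all of $f^{(1)}$ from $i_1$ and hand it to $i_2$, and compensate by moving an $a_1/a_2$ fraction of $f^{(2)}$ the other way.

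What each approach buys: the paper's path-by-path swap is purely combinatorial (no fractions $a_1/a_2$ appear), which is pleasant if one later cares about rationality or bit complexity, and it directly yields a polynomial iteration bound.  Your argument is shorter and conceptually cleaner---a single linear-algebraic update rather than a loop---and makes the preservation of the total flow completely transparent.

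One small remark on your ``main obstacle'' paragraph: you do need that $f^{(1)}$ is a genuine $u_{\bar\spnode}$-$v_{\bar\spnode}$-flow, but for that it suffices that $\{s_{i_1},t_{i_1}\}\cap\innerV_{\spnode_1}=\emptyset$, which is immediate from $z_{\spnode_1,i_1}<1$.  The additional claim $\{s_{i_1},t_{i_1}\}\cap\innerV_{\spnode_2}=\emptyset$ is true but not actually needed: since $f^{(2)}$ is already a $u_{\bar\spnode}$-$v_{\bar\spnode}$-flow (from $z_{\spnode_2,i_2}<1$), adding a scalar multiple of it to $i_1$'s flow leaves the excess at every inner node of $G_{\spnode_2}$ unchanged, regardless of whether $s_{i_1}$ or $t_{i_1}$ happens to sit there.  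So the structural detour can be trimmed, though it does no harm.
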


\begin{proof}
While commodities~$i_1$ and~$i_2$ are routed fractionally through the subgraphs~$G_{\spnode_1}$ and~$G_{\spnode_2}$, respectively, we repeatedly swap flow as follows:
\begin{quote}
Find a flow-carrying $u_{\spnode}$--$v_{\spnode}$~path~$P_1$ for commodity~$i_1$ in~$G_{\spnode_1}$, and a flow-carrying $u_{\spnode}$--$v_{\spnode}$~path~$P_2$ for commodity~$i_2$ in~$G_{\spnode_2}$; that is,~$\mcflow_{e,i_j}>0$ for~$e\in E(P_j)$ and~\mbox{$j=1,2$}. Let~$\delta\coloneqq\min_{j\in\{1,2\}}\min_{e\in P_j}\mcflow_{e,i_j}$. Then, for~$j=1,2$, decrease flow of commodity~$i_j$ along path~$P_j$ by~$\delta$ and increase it by~$\delta$ along the other path~$P_{3-j}$.
\end{quote}
The described swapping of flow within~$G_{\bar\spnode}$ maintains a multiflow without changing the total flow. Moreover, the number of flow-carrying arcs of commodity~$i_1$ in~$G_{\spnode_1}$ plus the number of flow-carrying arcs of commodity~$i_2$ in~$G_{\spnode_2}$ is decreased by at least one (in fact, the bottleneck arc~$e$ in the definition of~$\delta$ is no longer counted). Therefore, after at most~$|E_{\bar\spnode}|$ many iterations, commodity~$i_1$ is no longer routed through~$G_{\spnode_1}$ or commodity~$i_2$ is no longer routed through~$G_{\spnode_2}$.
\end{proof}

The following corollary states an important special case of Lemma~\ref{lem:swap}.

\begin{corollary}
\label{cor:swap}
Consider a multiflow~$\vec{\mcflow}$, a $p$-node $\bar\spnode\in V_T$ with child nodes~$\spnode_1,\spnode_2$, and two distinct commodities~$i_1,i_2\in\fractionalDemands_{\spnode_1}(\vec{\mcflow})\cap\fractionalDemands_{\spnode_2}(\vec{\mcflow})$. Commodities~$i_1,i_2$ can be rerouted within~$G_{\bar\spnode}$ to obtain a multiflow~$\tilde{\vec{\mcflow}}$ with the same total flow, such that~$i_1\notin\fractionalDemands_{\spnode_1}(\tilde{\vec{\mcflow}})\cap\fractionalDemands_{\spnode_2}(\tilde{\vec{\mcflow}})$ or~$i_2\notin\fractionalDemands_{\spnode_1}(\tilde{\vec{\mcflow}})\cap\fractionalDemands_{\spnode_2}(\tilde{\vec{\mcflow}})$; in particular,~$|\fractionalDemands_{\spnode_1}(\tilde{\vec{\mcflow}})\cap\fractionalDemands_{\spnode_2}(\tilde{\vec{\mcflow}})|<|\fractionalDemands_{\spnode_1}(\vec{\mcflow})\cap\fractionalDemands_{\spnode_2}(\vec{\mcflow})|$.
\end{corollary}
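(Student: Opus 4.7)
The plan is to derive this essentially as a direct consequence of \Cref{lem:swap}. Since the hypothesis assumes $i_1, i_2 \in \fractionalDemands_{\spnode_1}(\vec{\mcflow}) \cap \fractionalDemands_{\spnode_2}(\vec{\mcflow})$, in particular we have $i_1 \in \fractionalDemands_{\spnode_1}(\vec{\mcflow})$ and $i_2 \in \fractionalDemands_{\spnode_2}(\vec{\mcflow})$, which is precisely the input required by \Cref{lem:swap} applied to the $p$-node $\bar\spnode$ with children $\spnode_1, \spnode_2$ and the pair $(i_1, i_2)$. I would first apply the lemma to obtain a multiflow $\tilde{\vec{\mcflow}}$ with the same total flow such that either $z_{\spnode_1, i_1}(\tilde{\vec{\mcflow}}) = 0$ or $z_{\spnode_2, i_2}(\tilde{\vec{\mcflow}}) = 0$.

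Next, I would translate these two outcomes into the claimed conclusion. If $z_{\spnode_1, i_1}(\tilde{\vec{\mcflow}}) = 0$, then $i_1 \notin \fractionalDemands_{\spnode_1}(\tilde{\vec{\mcflow}})$ by \Cref{def:routed-frac}, so $i_1 \notin \fractionalDemands_{\spnode_1}(\tilde{\vec{\mcflow}}) \cap \fractionalDemands_{\spnode_2}(\tilde{\vec{\mcflow}})$. Symmetrically, if $z_{\spnode_2, i_2}(\tilde{\vec{\mcflow}}) = 0$, then $i_2 \notin \fractionalDemands_{\spnode_2}(\tilde{\vec{\mcflow}})$, hence $i_2 \notin \fractionalDemands_{\spnode_1}(\tilde{\vec{\mcflow}}) \cap \fractionalDemands_{\spnode_2}(\tilde{\vec{\mcflow}})$. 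This gives the first part of the statement.

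For the \emph{in particular} assertion, I would argue that the intersection strictly shrinks. The swapping procedure in the proof of \Cref{lem:swap} only modifies the flow of commodities $i_1$ and $i_2$, and only on arcs of $G_{\bar\spnode}$. Hence, for every commodity $j \notin \{i_1, i_2\}$, the values $z_{\spnode_1, j}$ and $z_{\spnode_2, j}$ are preserved, so no new commodity is introduced into the intersection. Meanwhile, at least one of $i_1, i_2$ was in $\fractionalDemands_{\spnode_1}(\vec{\mcflow}) \cap \fractionalDemands_{\spnode_2}(\vec{\mcflow})$ before the swap and is dropped from it after the swap, by the dichotomy above. Therefore
\[
|\fractionalDemands_{\spnode_1}(\tilde{\vec{\mcflow}}) \cap \fractionalDemands_{\spnode_2}(\tilde{\vec{\mcflow}})| < |\fractionalDemands_{\spnode_1}(\vec{\mcflow}) \cap \fractionalDemands_{\spnode_2}(\vec{\mcflow})|,
\]
as required.

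I do not foresee a significant obstacle: the work was done in \Cref{lem:swap}, and the only real point of care is checking that the swap, being confined to $G_{\bar\spnode}$ and involving only $i_1$ and $i_2$, cannot accidentally push a third commodity into the fractional intersection. This is immediate from the explicit description of the swap operation.
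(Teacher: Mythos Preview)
Your proposal is correct and matches the paper's approach: the paper presents \Cref{cor:swap} as an immediate special case of \Cref{lem:swap} without a separate proof, and your derivation spells out exactly that reduction. The only minor phrasing quibble is that both $i_1$ and $i_2$ are in the intersection by hypothesis (not merely ``at least one''), but your argument is otherwise clean.
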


Notice that one can establish Property~\eqref{def:almost-unsplittable:ii} in Definition~\ref{def:almost-unsplittable} by repeatedly applying Corollary~\ref{cor:swap} until at most one commodity remains that is fractionally routed through both parallel components~$G_{\spnode_1}$ and~$G_{\spnode_2}$.

\begin{proof}[Proof of Theorem~\ref{thm:almost-unsplittable}]
The idea of the proof is to go through the $sp$-tree~$T$ from top to bottom (i.e., from the root node to the leafs), always modifying the current multiflow with the help of Lemma~\ref{lem:swap} until the properties in Definition~\ref{def:almost-unsplittable} are fulfilled. For each node of the $sp$-tree, the swapping routine described in the proof of Lemma~\ref{lem:swap} will be applied polynomially often, such that the entire procedure is efficient.

For the root node~$\spnode_0$ of~$T$, by definition, no commodity is routed fractionally through~$G_{\spnode_0}$. That is,~$\fractionalDemands_{\spnode_0}(\vec{\mcflow})=\emptyset$ and Property~\eqref{def:almost-unsplittable:i} in Definition~\ref{def:almost-unsplittable} is thus fulfilled for~$\spnode_0$. Moreover, since~$\spnode_0$ does not have a parent node in~$T$, Property~\eqref{def:almost-unsplittable:ii} trivially holds.

We next consider an arbitrary node~$\spnode\in V_T$ whose parent node~$\bar{\spnode}$ fulfills Property~\eqref{def:almost-unsplittable:i}, i.e.,~$|\fractionalDemands_{\bar\spnode} (\vec{\mcflow})| \leq 2$. If~$\bar\spnode$ is an $s$-node, then, by Observation~\ref{obs:z-in-series-parallel}\,\eqref{obs:z-in-serial} and~\eqref{obs:z-in-serial-part2}, Property~\eqref{def:almost-unsplittable:i} is also fulfilled for~$\spnode$, and Property~\eqref{def:almost-unsplittable:ii} trivially holds.

It remains to consider the case that~$\bar{\spnode}$ is a $p$-node. Let~$\spnode_1\coloneqq\spnode$ and denote~$\bar{\spnode}$'s second child by~$\spnode_2$. With the help of Lemma~\ref{lem:swap}, we reroute flow within~$G_{\bar{\spnode}}$ until Properties~\eqref{def:almost-unsplittable:i} and~\eqref{def:almost-unsplittable:ii} are fulfilled for both~$\spnode_1$ and~$\spnode_2$.
First, by repeatedly applying Corollary~\ref{cor:swap} to pairs of commodities~$i_1,i_2\in\fractionalDemands_{\spnode_1}(\vec{\mcflow})\cap\fractionalDemands_{\spnode_2}(\vec{\mcflow})$, we ensure that
\begin{equation}
\label{eq:help1}
|\fractionalDemands_{\spnode_1}(\vec{\mcflow})\cap\fractionalDemands_{\spnode_2}(\vec{\mcflow})|\leq1,
\end{equation}
and Property~\eqref{def:almost-unsplittable:ii} is fulfilled. By Observation~\ref{obs:z-in-series-parallel}\,\eqref{obs:z-in-parallel}, every commodity~$i\notin\fractionalDemands_{\bar{\spnode}}(\vec{\mcflow})$ (i.e., $z_{\bar\spnode,i} (\vec{\mcflow})\in\{0,1\}$) that is routed fractionally through the subgraph~$G_{\spnode_1}$ (i.e., $0<z_{\spnode_1,i} (\vec{\mcflow})<1$) must also be routed fractionally through~$G_{\spnode_2}$; that is,
\begin{equation}
\label{eq:help2}
\fractionalDemands_{\spnode_1}(\vec{\mcflow})\setminus\fractionalDemands_{\bar\spnode}(\vec{\mcflow})=\fractionalDemands_{\spnode_2}(\vec{\mcflow})\setminus\fractionalDemands_{\bar\spnode}(\vec{\mcflow})\subseteq\fractionalDemands_{\spnode_1}(\vec{\mcflow})\cap\fractionalDemands_{\spnode_2}(\vec{\mcflow}).
\end{equation}
Thus, due to~\eqref{eq:help1}, the set~$\fractionalDemands_{\spnode_1}(\vec{\mcflow})\setminus\fractionalDemands_{\bar\spnode}(\vec{\mcflow})=\fractionalDemands_{\spnode_2}(\vec{\mcflow})\setminus\fractionalDemands_{\bar\spnode}(\vec{\mcflow})$ has cardinality~$0$ or~$1$. If the cardinality is~$0$, then both~$\fractionalDemands_{\spnode_1}(\vec{\mcflow})$ and~$\fractionalDemands_{\spnode_2}(\vec{\mcflow})$ are subsets of~$\fractionalDemands_{\bar\spnode}(\vec{\mcflow})$, and Property~\eqref{def:almost-unsplittable:i} holds for both~$\spnode_1$ and~$\spnode_2$.

Otherwise, there is a commodity~$i_1\notin\fractionalDemands_{\bar\spnode}(\vec{\mcflow})$ with~$\fractionalDemands_{\spnode_1}(\vec{\mcflow})\setminus\fractionalDemands_{\bar\spnode}(\vec{\mcflow})=\fractionalDemands_{\spnode_2}(\vec{\mcflow})\setminus\fractionalDemands_{\bar\spnode}(\vec{\mcflow})=\{i_1\}$, and every commodity from~$\fractionalDemands_{\bar\spnode}(\vec{\mcflow})$ is either contained in~$\fractionalDemands_{\spnode_1}(\vec{\mcflow})$ or in~$\fractionalDemands_{\spnode_2}(\vec{\mcflow})$ but not in both. Therefore, Property~\eqref{def:almost-unsplittable:i} is only violated for~$\spnode_1$ or~$\spnode_2$, if~$\fractionalDemands_{\bar\spnode}(\vec{\mcflow})$ consists of two distinct commodities~$i_2,i_3$ that are both contained in either~$\fractionalDemands_{\spnode_1}(\vec{\mcflow})$ or~$\fractionalDemands_{\spnode_2}(\vec{\mcflow})$. By symmetry, we may restrict to the case where~$\fractionalDemands_{\spnode_1}(\vec{\mcflow})=\{i_1\}$ and~$\fractionalDemands_{\spnode_2}(\vec{\mcflow})=\{i_1,i_2,i_3\}$. Applying Lemma~\ref{lem:swap} to commodities~$i_1,i_2$ produces a new multiflow~$\tilde{\vec{\mcflow}}$
that satisfies at least one of the following two conditions: 1) commodity~$i_1$ is no longer routed through~$G_{\spnode_1}$ and thus~$z_{\spnode_2,i_1}=1$, that is,
\[
\fractionalDemands_{\spnode_1}(\tilde{\vec{\mcflow}})=\{i_2\}
\quad\text{and}\quad
\fractionalDemands_{\spnode_2}(\tilde{\vec{\mcflow}})\subseteq\{i_2,i_3\},
\]
or 2) commodity~$i_2$ is no longer routed through~$G_{\spnode_2}$, that is,
\[
\fractionalDemands_{\spnode_1}(\tilde{\vec{\mcflow}})\subseteq\{i_1,i_2\}
\quad\text{and}\quad
\fractionalDemands_{\spnode_2}(\tilde{\vec{\mcflow}})\subseteq\{i_1,i_3\}.
\]
In both cases, Properties~\eqref{def:almost-unsplittable:i} and~\eqref{def:almost-unsplittable:ii} are obviously fulfilled for~$\spnode_1$ and~$\spnode_2$.
\end{proof}

\section{Convex decompositions of multiflows}
\label{sec:convex:combination}

In this section, as an intermediate step towards our main result, \Cref{thm:main}, we first prove a slightly weaker version, allowing unsplittable multiflows to deviate from the given fractional multiflow by at most~$2d_{\max}$ on each arc.\footnote{Traub, Vargas Koch, and Zenklusen obtained the same bound for the special case of a single source node~$s=s_i$, $i\in[k]$, on the more general class of planar and acyclic digraphs; see~\cite[Theorem~1.8]{TraubVargasKochZenklusen2024}.}

\begin{theorem}
\label{thm:2dmax}
For any multiflow~$\vec{\mcflow}$ in~$G$, its total arc flows~$(x_e)_{e\in E}$ can be expressed as a convex combination of total arc flows~$(y_e)_{e\in E}$ of unsplittable multiflows satisfying
\begin{align}
\label{eq:2dmax}
x_e-2d_{\max}<y_e<x_e+2d_{\max}\qquad\text{for every arc~$e\in E$.}
\end{align}
Moreover, the unsplittable multiflows and the coefficients of the convex combination can be computed efficiently.
\end{theorem}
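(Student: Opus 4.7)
The plan is to combine \cref{thm:almost-unsplittable} with a randomized path rounding. First I would invoke \cref{thm:almost-unsplittable} to replace $\vec{\mcflow}$ by an almost unsplittable multiflow $\vec{\mcflow}'$ with the same total arc flows $(x_e)_{e \in E}$, ensuring $|\fractionalDemands_{\spnode}(\vec{\mcflow}')| \leq 2$ for every $\spnode \in V_T$; it then suffices to express $\vec{\mcflow}'$ as a convex combination of unsplittable multiflows satisfying the deviation bound.

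For each commodity $i$, I would decompose the $s_i$-$t_i$-flow $(\mcflow'_{e,i})_{e \in E}$ into at most $|E|$ path-flows via standard single-commodity flow decomposition (available because series-parallel digraphs are acyclic), obtaining $\mcflow'_{e,i} = \sum_{P} f^i_P \cdot \indicator[e \in P]$ with $\sum_{P} f^i_P = d_i$. Drawing one path $P_i$ per commodity independently with $\Pr[P_i = P] = f^i_P/d_i$ produces an unsplittable multiflow whose total arc flow $(y_e)_{e \in E}$ satisfies $\mathbb{E}[y_e] = \sum_i d_i \cdot \mcflow'_{e,i}/d_i = x_e$. Hence $(x_e)_{e \in E}$ is a convex combination of total arc flows of these random unsplittable multiflows.

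For the pointwise deviation bound, fix an arc $e$ and let $\spnode_e$ be its leaf in $T$; then $z_{\spnode_e, i}(\vec{\mcflow}') = \mcflow'_{e, i}/d_i$. If $z_{\spnode_e, i} = 0$, no flow-carrying path of $i$ uses $e$; if $z_{\spnode_e, i} = 1$, every flow-carrying path of $i$ must use $e$ (otherwise $\mcflow'_{e,i}$ would be strictly less than $d_i$). In both cases $y_{e,i} = \mcflow'_{e,i}$ deterministically, so only commodities in $\fractionalDemands_{\spnode_e}(\vec{\mcflow}')$ can contribute to $y_e - x_e$, each with $|y_{e,i} - \mcflow'_{e,i}|$ equal to $(1-z_{\spnode_e,i})d_i$ or $z_{\spnode_e,i} d_i$, both strictly less than $d_i \leq d_{\max}$. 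Since almost-unsplittability gives $|\fractionalDemands_{\spnode_e}(\vec{\mcflow}')| \leq 2$, the triangle inequality yields $|y_e - x_e| < 2d_{\max}$ on every arc for every realization. The main remaining obstacle is producing an explicit polynomial-size convex combination rather than a distribution of potentially exponential support: since total arc flow vectors lie in $\mathbb{R}^{|E|}$, Carath\'eodory's theorem reduces the combination to $|E|+1$ terms, and a constructive version iteratively samples a single unsplittable multiflow via path selection, peels off the largest scalar weight that keeps the residual a nonnegative combination of the remaining support, and recurses on the rescaled residual.
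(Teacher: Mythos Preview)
Your argument is correct and takes a genuinely different, more elementary route than the paper. Both proofs begin by invoking \cref{thm:almost-unsplittable} to pass to an almost unsplittable multiflow~$\vec{\mcflow}'$ with the same total flow, and both ultimately rely on the fact that at every leaf~$\spnode_e$ at most two commodities are fractional, so any unsplittable rounding can deviate from~$x_e$ by less than~$2d_{\max}$. From there the paper proceeds structurally: it fixes, for every node~$\spnode$ of the $sp$-tree, specific group weights~$\mu^{\spnode}_1,\dots,\mu^{\spnode}_4$ (\cref{lem:mu-values}), and then recursively refines convex combinations up the tree (\cref{lem:recursive:convex}), carefully matching groups at $s$- and $p$-nodes so that every unsplittable flow in the output respects one of the four routing options at every~$\spnode$. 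Your approach replaces all of this machinery with a single-shot path decomposition of each commodity flow followed by (coupled) randomized rounding; the deviation bound then falls out of $|\fractionalDemands_{\spnode_e}(\vec{\mcflow}')|\le 2$ without ever touching the interior of the $sp$-tree.

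What each approach buys: your argument is shorter and needs nothing beyond \cref{thm:almost-unsplittable}, flow decomposition, and Carath\'eodory. The paper's recursive construction is heavier but is not wasted effort: the specific choice of~$\mu^{\spnode}_j$ in~\eqref{eq:definition:mu} is exactly what enables the sharpening to~$d_{\max}$ in \cref{lem:dmax-bound} and hence \cref{thm:main}; independent per-commodity rounding cannot recover that improvement, because it may place positive weight on the routing option~$J^{\spnode}_4=\{\ileft,\iright\}$ even when $z_{\spnode,\ileft}+z_{\spnode,\iright}<1$. One small point on efficiency: your ``peel off the largest scalar'' description is workable but can be stated more cleanly as a common-threshold coupling---draw a single $U\in[0,1]$ and, for each commodity, pick the path whose cumulative interval contains~$U$; the distinct breakpoints across all commodities (at most $\sum_i L_i\le k|E|$ of them) directly give a polynomial-size convex combination with the same marginals and the same per-realization deviation bound.
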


We discuss the proof of \cref{thm:2dmax} in detail since the construction of the convex decomposition also forms the basis of the proof of our main result \cref{thm:main}.
In view of \Cref{thm:almost-unsplittable}, it suffices to prove \Cref{thm:2dmax} for an almost unsplittable multiflow~$\vec{\mcflow}$ with the same total flow~$\vec{\totalflow}$.
Although the statement of \cref{thm:2dmax} is only about the total flows, we proceed by showing the stronger statement that an almost unsplittable multiflow~$\vec{\mcflow}$ itself (i.e., the full matrix $\vec{\mcflow}$ and not only the total flow vector~$\vec{\totalflow}$) can be expressed as a convex combination of unsplittable multiflows.

\subsection{Unsplittable Routings.}
In order to construct suitable unsplittable multiflows for the convex decomposition, we leverage the properties of almost unsplittable multiflows.
If $\vec{\mcflow}$ is almost unsplittable, by \Cref{def:almost-unsplittable}, for each node $\spnode\in V_T$ of the $sp$-tree~$T$, multiflow~$\vec{\mcflow}$ routes (at most) two commodities fractionally through the corresponding component.
To simplify notation and avoid case distinctions, we assume that~$|\fractionalDemands_\spnode| = 2$ for every~$\spnode$, and let~$\ileft,\iright\in [k]$ be the two distinct commodities routed fractionally through~$\spnode$, i.e., $\fractionalDemands_\spnode = \{ \ileft, \iright \}$. We take care of the case~$|\fractionalDemands_\spnode|<2$ by introducing dummy commodities in \cref{app:special-cases}.
Since in an almost unsplittable multiflow all but the two commodities~$\ileft, \iright \in \fractionalDemands_\spnode$ must be routed through the subdigraph~$G_\spnode$, we are only left with four possible routing options.
We represent these options with the subsets
\begin{equation}
J^{\spnode}_1 \coloneqq \emptyset,\qquad J^{\spnode}_2 \coloneqq \{\ileft\},\qquad J^{\spnode}_3 \coloneqq \{\iright\},\qquad\text{and}\quad J^{\spnode}_4 \coloneqq \{ \ileft, \iright \},
\label{eq:sets:j}
\end{equation}
where routing option~$j \in \{1,2,3,4\}$ means that exactly the commodities in $\unsplittableDemands_\spnode \cup J^\spnode_j$ (i.e., the commodities that are completely routed through~$G_\spnode$ plus the commodities selected in the set~$J^\spnode_j$) are routed through the subdigraph~$G_\spnode$. We formalize this in the following definition.
\begin{definition}
	We say an unsplittable multiflow~$\vec{Y}$ \emph{respects the routing option $j \in \{1,2,3,4\}$ in~$\spnode \in V_T$} if
	$
		z_{\spnode, i} (\vec{Y}) = 1
		\text{ if } i \in \unsplittableDemands_\spnode \cup J^{\spnode}_j
	$
	and
	$
		z_{\spnode, i} (\vec{Y}) = 0
		\text{ if } i \notin \unsplittableDemands_\spnode \cup J^{\spnode}_j
		.
	$
\end{definition}
Notice, that the definition above (as well as most definitions in this section) are always in relation to the given (almost unsplittable) multiflow~$\vec{\mcflow}$ since the sets~$\unsplittableDemands_\spnode$ and $J^\spnode_j$ depend on this flow.
For any multiflow~$\vec{X}$ (fractional and unsplittable), we define
\[
    \componentflow_{\spnode, i} \coloneqq z_{\spnode, i} (\vec\mcflow) d_i
    \quad \text{and} \quad
    \componentflow_{\spnode} \coloneqq \sum_{i=1}^k \componentflow_{\spnode, i},
\]
where $\componentflow_{\spnode, i}$ is the amount of flow of some commodity~$i$ routed through the subgraph~$G_\spnode$ and $\componentflow_{\spnode}$ is the total flow of all commodities routed through the subgraph~$G_\spnode$.
In the special case of a leaf node~$\spnode \in V_T$ that represents an arc~$e \in E$, we have $\mcflow_{e, i} = \componentflow_{\spnode, i}$ and $\totalflow_{e} = \componentflow_{\spnode}$.
In view of the desired bound~\eqref{eq:2dmax} we obtain the following result that follows immediately from the previous definition.
\begin{lemma}\label{lem:routing:2dmax}
	Let $\vec{\mcflow}$ be an almost unsplittable flow and $\vec{Y}$ an unsplittable flow.
	\begin{enumerate}[(i)]
		\item If $\vec{Y}$ respects the routing option $j \in \{1,2,3,4\}$ in some $\spnode \in V_T$, then
		$
            |\componentflow[y]_{\spnode} - \componentflow[x]_{\spnode}| < 2 d_{\max}
			.
		$
		\item If, for every (leaf) node~$\spnode \in V_T$, $\vec{Y}$ respects some routing option $j_\spnode \in \{1,2,3,4\}$, then
		\[
			\totalflow_e - 2 d_{\max} <  y_e < \totalflow_e + 2 d_{\max}
		\]
		for all $e \in E$.
	\end{enumerate}
\end{lemma}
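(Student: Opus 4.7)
The plan is to prove part (i) by a direct case analysis over the four routing options, and then derive part (ii) by specializing to leaf nodes. Throughout, I will use the simplifying assumption $\fractionalDemands_\spnode = \{\ileft, \iright\}$ as in the preceding paragraph (the $|\fractionalDemands_\spnode| < 2$ cases being absorbed into the dummy-commodity setup referred to in the text).

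For part (i), I would first rewrite the quantities $\componentflow_\spnode$ and $\componentflow[y]_\spnode$ in a common form. By the definition of $\componentflow_{\spnode, i}$ and the fact that every commodity not in $\fractionalDemands_\spnode \cup \unsplittableDemands_\spnode$ has zero demand share at $\spnode$, any multiflow $\vec{X}'$ satisfies
\[
\componentflow[x']_\spnode \;=\; \sum_{i \in \unsplittableDemands_\spnode} d_i \;+\; z_{\spnode, \ileft}(\vec{X'}) \, d_{\ileft} \;+\; z_{\spnode, \iright}(\vec{X'}) \, d_{\iright}.
\]
For $\vec{\mcflow}$, both fractional demand shares lie strictly in $(0,1)$ by Definition~\ref{def:routed-frac}; for $\vec{Y}$, each demand share equals $1$ if the commodity lies in $J^\spnode_j$ and $0$ otherwise, since $\vec{Y}$ respects the routing option $j$. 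Subtracting the two expressions gives
\[
\componentflow[y]_\spnode - \componentflow_\spnode \;=\; \sum_{i \in J^\spnode_j}\bigl(1 - z_{\spnode, i}(\vec{\mcflow})\bigr) d_i \;-\; \sum_{i \in \fractionalDemands_\spnode \setminus J^\spnode_j} z_{\spnode, i}(\vec{\mcflow}) \, d_i.
\]

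The second step is to bound this difference. Each of the (at most two) summands in the first sum lies strictly in $(0, d_{\max})$, and likewise for the second sum; moreover the two sums are over disjoint index sets with $|J^\spnode_j| + |\fractionalDemands_\spnode \setminus J^\spnode_j| = |\fractionalDemands_\spnode| = 2$. Hence the positive and negative contributions each total strictly less than $2 d_{\max}$, so $|\componentflow[y]_\spnode - \componentflow_\spnode| < 2 d_{\max}$ in all four cases $j \in \{1, 2, 3, 4\}$. Inspecting the cases individually (namely $J^\spnode_1 = \emptyset$ giving a difference in $(-2d_{\max}, 0)$, $J^\spnode_2$ and $J^\spnode_3$ giving a difference in $(-d_{\max}, d_{\max})$, and $J^\spnode_4 = \{\ileft, \iright\}$ giving a difference in $(0, 2 d_{\max})$) makes the strict inequality transparent.

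Part (ii) then follows immediately: for any leaf node $\spnode \in V_T$ representing an arc $e \in E$, the subdigraph $G_\spnode$ consists of just the arc $e$, so $\componentflow_\spnode = \totalflow_e$ and $\componentflow[y]_\spnode = y_e$ as already noted in the text. Applying part (i) to each such leaf yields the desired two-sided bound on every arc. The main technical point — really the only one — is the strict inequality, which hinges on the assumption that $0 < z_{\spnode, i}(\vec{\mcflow}) < 1$ for $i \in \fractionalDemands_\spnode$; otherwise the proof is purely a bookkeeping exercise using the definition of \emph{respecting} a routing option, and no genuine obstacle arises.
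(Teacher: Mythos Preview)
Your proof is correct and follows essentially the same approach as the paper's: both compute $\componentflow[y]_\spnode - \componentflow_\spnode$ by cancelling the contribution of the unsplittably routed commodities and bounding the remaining sum over $\fractionalDemands_\spnode = \{\ileft,\iright\}$ by $d_{\ileft}+d_{\iright}\leq 2d_{\max}$, then specialize to leaf nodes for part~(ii). The paper compresses your case analysis into the single line $\bigl|\sum_{i\in J^\spnode_j} d_i - \sum_{i\in\fractionalDemands_\spnode} z_{\spnode,i}\,d_i\bigr| < d_{\ileft}+d_{\iright}$, but the underlying computation is the same.
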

\begin{proof}
	Assume that $\vec{Y}$ respects the routing option $j \in \{1,2,3,4\}$ in some fixed $\spnode$. Then,
	\[
        |\componentflow[y]_{\spnode} - \componentflow[x]_{\spnode}|
        =
		\Bigl|\sum_{i \in [k]} z_{\spnode, i} (\vec{Y}) d_i - \sum_{i \in [k]} z_{\spnode, i} (\vec\mcflow) d_i \Bigr|
		=
		\Big| \sum_{i \in J^{\spnode}_j} d_i - \sum_{i \in \fractionalDemands_{\spnode}} z_{\spnode, i} d_i \Big| <
		d_{\ileft} + d_{\iright} \leq 2 d_{\max}
	\]
	proving \emph{(i)}.
	For every arc~$e \in E$ there is some leaf node $\spnode \in V_T$ such that $\componentflow[x]_{\spnode} = \totalflow_e$ and $\componentflow[y]_{\spnode} = y_e$, proving \emph{(ii)}.
\end{proof}

Our goal is to define unsplittable flows~$\vec{Y}_{\ell}, \ell = 1, \dotsc, \numflows$ that respect one of the routing options $j\in\{1,2,3,4\}$ in every~$\spnode \in V_T$ and that are a convex decomposition of the given multiflow~$\vec{\mcflow}$, i.e.,
\begin{equation}\label{eq:unsplittable:convex:combination}
	\sum_{\ell = 1}^\numflows \rho_{\ell} \vec{Y}_{\ell} = \vec{\mcflow}
\end{equation}
for suitable convex coefficients $\rho_\ell \in [0,1]$ with $\sum_{\ell=1}^{\numflows} \rho_\ell = 1$.
If all unsplittable flows~$\vec{Y}_\ell$ respect some routing option for every~$\spnode \in V_T$, we can group the flows with respect to the respective routing options.
Formally, let $(\vec{Y}_\ell)_{\ell \in [\numflows]}$ be a family of unsplittable multiflows that all respect some routing option in every~$\spnode \in V_T$ and assume that \cref{eq:unsplittable:convex:combination} holds. Then, we define for every $\spnode \in V_T$ groups $\flowgroup^\spnode_j \coloneqq \{ \ell \in [\numflows] \mid \vec{Y}_\ell \text{ respects routing option }j \text{ in }\spnode\}, j=1,2,3,4$.
Additionally, we denote by $\mu^{\spnode}_j \coloneqq \sum_{\ell \in K^{\spnode}_j} \rho_{\ell}$ the total weight on group~$j \in \{1,2,3,4\}$.
We can now characterize suitable convex decompositions as follows.
\begin{lemma}\label{lem:mu:consistency}
    Let $\vec{\mcflow}$ be a given almost unsplittable multiflow. Further, let~$(\vec{Y}_{\ell})_{\ell \in [\numflows]}$ be a family of unsplittable multiflows that respect some routing option~$j \in \{1,2,3,4\}$ in every node~$\spnode \in V_T$ of the $sp$-tree and let~$(\rho_{\ell})_{\ell \in [\numflows]}$ be a family of coefficients~$\rho_{\ell} \in [0,1]$ with $\sum_{l=1}^{\numflows} \rho_{\ell} = 1$.
    Then, these families satisfy \cref{eq:unsplittable:convex:combination} if and only if
    \begin{equation}
        \mu^\spnode_2+\mu^\spnode_4 = z_{\spnode,\ileft},\quad \mu^\spnode_3+\mu^\spnode_4 = z_{\spnode,\iright},\quad\text{and}\quad \mu^\spnode_1+\mu^\spnode_2+\mu^\spnode_3+\mu^\spnode_4=1
        \label{eq:mu:conditions}
    \end{equation}
    holds for every node~$\spnode \in V_T$,
    where $z_{\spnode,\ileft} \coloneqq z_{\spnode,\ileft} (\vec{\mcflow})$ and $z_{\spnode,\iright} \coloneqq z_{\spnode,\iright} (\vec{\mcflow})$ are the respective demand shares of the given multiflow~$\vec{\mcflow}$.
\end{lemma}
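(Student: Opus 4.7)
The plan is to exploit the fact that, for the two fractionally routed commodities $\ileft, \iright \in \fractionalDemands_\spnode$, the demand share $z_{\spnode, i}$ is a linear functional of the arc-flow matrix (since $z_{\spnode, \ileft}(\vec{\mcflow}) \in (0,1)$ forces $\innerV_\spnode \cap \{s_\ileft, t_\ileft\} = \emptyset$, and analogously for $\iright$), while for any commodity $i$ with $\innerV_\spnode \cap \{s_i, t_i\} \neq \emptyset$ the demand share equals $1$ for every multiflow satisfying the given demands. Combined with the observation that whenever $\vec{Y}_\ell$ respects routing option $j \in \{1,2,3,4\}$ at $\spnode$ one has $z_{\spnode, i}(\vec{Y}_\ell) = \indicator[i \in \unsplittableDemands_\spnode \cup J^\spnode_j]$, this reduces the lemma to bookkeeping on the groups $\flowgroup^\spnode_j$.

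For the forward direction, suppose $\sum_{\ell} \rho_\ell \vec{Y}_\ell = \vec{\mcflow}$. Applying the linear functional $z_{\spnode, \ileft}$ to both sides, which is legitimate because every $\vec{Y}_\ell$ has the same demands as $\vec{\mcflow}$, gives
\[
z_{\spnode, \ileft}(\vec{\mcflow}) \;=\; \sum_{\ell=1}^{\numflows} \rho_\ell\, z_{\spnode, \ileft}(\vec{Y}_\ell) \;=\; \sum_{\ell \in \flowgroup^\spnode_2 \cup \flowgroup^\spnode_4} \rho_\ell \;=\; \mu^\spnode_2 + \mu^\spnode_4,
\]
where the middle equality uses that $z_{\spnode, \ileft}(\vec{Y}_\ell) = 1$ exactly when $\vec{Y}_\ell$ respects routing option $2$ or $4$ at $\spnode$, and equals $0$ otherwise. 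The identical argument for $\iright$ yields $\mu^\spnode_3 + \mu^\spnode_4 = z_{\spnode, \iright}$, and the partition $[\numflows] = \flowgroup^\spnode_1 \cup \flowgroup^\spnode_2 \cup \flowgroup^\spnode_3 \cup \flowgroup^\spnode_4$ together with $\sum_\ell \rho_\ell = 1$ gives the third equation.

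For the backward direction we need equality of the full flow matrices, so it suffices to verify $\sum_\ell \rho_\ell (Y_\ell)_{e, i} = \mcflow_{e, i}$ for every arc $e \in E$ and every commodity $i \in [k]$. Taking $\spnode$ to be the leaf of $T$ corresponding to $e$ (so $\innerV_\spnode = \emptyset$), the identities $\mcflow_{e, i} = z_{\spnode, i}(\vec{\mcflow})\, d_i$ and $(Y_\ell)_{e, i} = z_{\spnode, i}(\vec{Y}_\ell)\, d_i$ reduce this to a case distinction based on how $i$ is classified at the leaf $\spnode$ with respect to $\vec{\mcflow}$: if $i \in \unsplittableDemands_\spnode$, every routing option includes $i$ and both sides equal $d_i$; if $i \notin \unsplittableDemands_\spnode \cup \{\ileft, \iright\}$, no routing option includes $i$ and both sides vanish; and if $i \in \{\ileft, \iright\}$, the hypothesis $\mu^\spnode_2 + \mu^\spnode_4 = z_{\spnode, \ileft}$ respectively $\mu^\spnode_3 + \mu^\spnode_4 = z_{\spnode, \iright}$ yields the equality after multiplication by $d_i$.

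No serious obstacle is anticipated; the main point requiring care is that the sets $\unsplittableDemands_\spnode$, $\fractionalDemands_\spnode$, $\ileft$, and $\iright$ are all defined relative to the fixed reference multiflow $\vec{\mcflow}$ rather than to the $\vec{Y}_\ell$, so that every routing option at $\spnode$ automatically sends all of $\unsplittableDemands_\spnode$ through $G_\spnode$ and none of the commodities in $[k] \setminus (\unsplittableDemands_\spnode \cup \fractionalDemands_\spnode)$. Once this is internalised, the lemma becomes a straightforward translation between nodewise linear constraints on the $\mu^\spnode_j$ and arc-by-arc equality of multiflows; notice in particular that the backward direction only ever uses the identities at the leaves of $T$, whereas the forward direction establishes them at every node.
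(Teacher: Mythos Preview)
Your proof is correct and follows essentially the same approach as the paper: both directions hinge on the identity $\sum_{e\in\delta^+_\spnode(u_\spnode)} \mcflow_{e,i}=z_{\spnode,i}(\vec{\mcflow})\,d_i$ (valid whenever $\innerV_\spnode\cap\{s_i,t_i\}=\emptyset$), which the paper applies directly by summing the component-wise convex combination over $\delta^+_\spnode(u_\spnode)$, and which you phrase as applying the linear functional $z_{\spnode,i}$. The forward direction works at every node, the backward direction only needs the leaves, and the case distinction on $i\in\unsplittableDemands_\spnode$, $i\in\{\ileft,\iright\}$, or neither is identical in both proofs.
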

\begin{proof}
First, we note that the last equation in~\eqref{eq:mu:conditions} is equivalent to the condition~$\sum_{\ell=1}^\numflows \rho_{\ell} = 1$.
\Cref{eq:unsplittable:convex:combination} can be rewritten component-wise as
$
    \sum_{\ell=1}^{\numflows} \rho_{\ell} Y_{\ell, e, i} = \mcflow_{e, i}
$
for every $e \in E$ and $i \in [k]$. Assume that~\eqref{eq:unsplittable:convex:combination} holds. Fix some node~$\spnode \in V_T$ and some commodity~$i \in [k]$ and sum this equation over all arcs $e \in \delta^+_\spnode (u_{\spnode})$. Then, we get from the left-hand side that
\begin{align*}
    \sum_{e \in \delta^{+}_\spnode (u_{\spnode})} \sum_{\ell=1}^{\numflows} \rho_{\ell} Y_{\ell, e, i}
    &= \sum_{\ell=1}^{\numflows} \rho_{\ell} z_{\spnode, i} (\vec{Y}_{\ell}) d_i
    = d_i \sum_{\ell: z_{\spnode, i} (\vec{Y}_{\ell}) = 1} \rho_{\ell}
    =
    \begin{cases}
        1 \cdot d_i &\text{if } i \in \unsplittableDemands_{\spnode}, \\
        (\mu^{\spnode}_2 + \mu^{\spnode}_4) d_{i} &\text{if } i = \ileft, \\
        (\mu^{\spnode}_3 + \mu^{\spnode}_4) d_{i} &\text{if } i = \iright, \\
        0 &\text{otherwise.}
    \end{cases}
\end{align*}
From the right-hand side, we get $\sum_{e \in \delta^{+}_\spnode (u_{\spnode})} \mcflow_{e,i} = z_{\spnode, i} (\vec{\mcflow}) d_i$. This implies~\eqref{eq:mu:conditions}.

On the other hand, if the families satisfy~\eqref{eq:mu:conditions}, we can use the above computation for the special case of leaf nodes~$\spnode$ that represent a fixed arc~$e \in E$. Since $\mcflow_{e,i} = z_{\spnode, i} (\vec{\mcflow}) d_i$ in this case, and $ z_{\spnode, i} = 1$ if $i \in \unsplittableDemands_{\spnode}$ and $ z_{\spnode, i} = 0$ if $i \notin \unsplittableDemands_{\spnode} \cup \{\ileft, \iright\}$, \eqref{eq:unsplittable:convex:combination} follows.
\end{proof}
The next lemma gives an explicit solution to the linear system~\eqref{eq:mu:conditions}. For a concise representation, we denote for any number $z\in \mathbb{R}$ by~$[z]^+ $ the \emph{positive part} of~$z$, that is,
$[z]^+ \coloneqq z$ if $z > 0$, and $[z]^+\coloneqq 0$ if $z \leq 0$.
The proof relies on the following basic properties of the positive part. For any two real numbers $a, b \in \R$, we have
\begin{align}
	[a - b]^+ - \max\{a,b\} &= -b,
	&
	[a - b]^+ + \min\{a,b\} &= a
	.
	\label{eq:properties:positive:part}
\end{align}

\begin{lemma}\label{lem:mu-values}
	For every node~$\spnode$ of the $sp$-tree, the numbers
	\begin{align}
		\mu^{\spnode}_1 &\coloneqq  \big[(1 - z_{\spnode, \iright}) - z_{\spnode, \ileft} \big]^+,
		&\mu^{\spnode}_2 &\coloneqq \min \{ z_{\spnode, \ileft}, 1 - z_{\spnode, \iright} \} ,\notag\\
		\;
		\mu^{\spnode}_3 &\coloneqq 1 -  \max \{ z_{\spnode, \ileft}, 1 - z_{\spnode, \iright} \} ,
		\;
		&\mu^{\spnode}_4 &\coloneqq 
		\big[ z_{\spnode, \ileft} - (1 - z_{\spnode, \iright}) \big]^+
		\label{eq:definition:mu}
	\end{align}
	are a solution to the linear system~\eqref{eq:mu:conditions}. Moreover, $\mu^\spnode_j \in [0,1], j=1,\dotsc,4$.
\end{lemma}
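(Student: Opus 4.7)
The plan is to verify the three equations of~\eqref{eq:mu:conditions} one by one, each time applying one of the two identities in~\eqref{eq:properties:positive:part} with the appropriate choice of $a$ and $b$, and then deduce the range $\mu^\spnode_j\in[0,1]$ from non-negativity of the individual terms together with the fact that the four values sum to $1$.

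For the first equation $\mu^\spnode_2+\mu^\spnode_4=z_{\spnode,\ileft}$, I would apply the second identity in~\eqref{eq:properties:positive:part} with $a\coloneqq z_{\spnode,\ileft}$ and $b\coloneqq 1-z_{\spnode,\iright}$, giving $[a-b]^+ + \min\{a,b\}=a$, which is exactly the claim. For the second equation $\mu^\spnode_3+\mu^\spnode_4=z_{\spnode,\iright}$, I would use the first identity in~\eqref{eq:properties:positive:part} with the same $a$ and $b$, obtaining $[a-b]^+-\max\{a,b\}=-b=-(1-z_{\spnode,\iright})$; adding $1$ on both sides yields the equation. For the third equation, I would swap the roles of $a$ and $b$ in the second identity (so $a\coloneqq 1-z_{\spnode,\iright}$, $b\coloneqq z_{\spnode,\ileft}$) to get $\mu^\spnode_1+\mu^\spnode_2=1-z_{\spnode,\iright}$, and then add this to the already-established $\mu^\spnode_3+\mu^\spnode_4=z_{\spnode,\iright}$.

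It remains to show $\mu^\spnode_j\in[0,1]$. Non-negativity is immediate: $\mu^\spnode_1,\mu^\spnode_4\geq 0$ since they are positive parts; $\mu^\spnode_2\geq 0$ since it is a minimum of two numbers in $[0,1]$ (recall $z_{\spnode,\ileft},z_{\spnode,\iright}\in[0,1]$); and $\mu^\spnode_3\geq 0$ because $\max\{z_{\spnode,\ileft},1-z_{\spnode,\iright}\}\leq 1$. Once all four values are non-negative and sum to $1$, each is automatically at most~$1$.

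I do not expect any real obstacle here; the whole proof is a mechanical application of the two identities in~\eqref{eq:properties:positive:part}. The only mild subtlety is recognising that the third equation need not be checked by a separate case distinction once the first two have been established (together with $\mu^\spnode_1+\mu^\spnode_2=1-z_{\spnode,\iright}$, which itself is just the second identity with the roles of the two arguments exchanged).
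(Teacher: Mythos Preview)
Your proposal is correct and follows essentially the same approach as the paper: both verify the equations of~\eqref{eq:mu:conditions} by applying the identities~\eqref{eq:properties:positive:part}, and the range $\mu^\spnode_j\in[0,1]$ is immediate from $z_{\spnode,\ileft},z_{\spnode,\iright}\in[0,1]$. The only cosmetic differences are that the paper groups the sum as $(\mu^\spnode_1+\mu^\spnode_3)+(\mu^\spnode_2+\mu^\spnode_4)$ rather than $(\mu^\spnode_1+\mu^\spnode_2)+(\mu^\spnode_3+\mu^\spnode_4)$, and it asserts the range directly at the outset rather than deducing the upper bound from the sum.
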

\begin{proof}
	Since $z_{\spnode, \ileft}, z_{\spnode, \iright} \in [0, 1]$, $\mu^{\spnode}_j \in [0,1]$ follows for $j = 1, \dotsc, 4$.
	Further, we compute
	\begin{align*}
		\sum_{j=1}^4 \mu^{\spnode}_j &=
		(\mu^{\spnode}_1 + \mu^{\spnode}_3) + (\mu^{\spnode}_2 + \mu^{\spnode}_4) \stackrel{\eqref{eq:properties:positive:part}}{=} (1 - z_{\spnode, \ileft}) + z_{\spnode, \ileft} = 1,
		\\
		\mu^\spnode_2+\mu^\spnode_4 &\stackrel{\mathclap{\eqref{eq:properties:positive:part}}}{=} z_{\spnode, \ileft}, \\
		\mu^\spnode_3+\mu^\spnode_4 &\stackrel{\mathclap{\eqref{eq:properties:positive:part}}}{=} 1 - (1 - z_{\spnode, \iright}) = z_{\spnode, \iright}
	\end{align*}
	concluding the proof.
\end{proof}

Note that, by definition, at most three of the four coefficients defined in~\eqref{eq:definition:mu} are non-zero. Thus, there are at most three groups of unsplittable multiflows with non-zero weight for each node~$\spnode \in V_T$.
This means, in every node~$\spnode$, we will use at most three out of the four routing options.

\begin{example}\label{ex:mus}
	Consider once again \cref{example}. For the node~$\spnode = \spnode_6$ of the $sp$-tree, we have
	$
		\vec{z}^{\spnode} (\vec{\mcflow}) = (0, \tfrac{3}{8}, 1, 1, 1, \tfrac{1}{4}, 0, 0)
		.
	$
	The fractionally routed commodities are $p^{\spnode} = 2$  with $z_{\spnode, p^{\spnode}} = \tfrac{3}{8}$ and $q^\spnode = 6$ with $z_{\spnode, q^{\spnode}} = \tfrac{1}{4}$. Hence, we obtain the coefficients
	\begin{align*}
		\mu_1^{\spnode} = \tfrac{3}{8}
		, \quad
		\mu_2^{\spnode} = \tfrac{3}{8}
		, \quad
		\mu_3^{\spnode} = \tfrac{1}{4}
		, \quad
		\mu_4^{\spnode} = 0
	\end{align*}
	corresponding to the four unsplittable routings
	\[
		\unsplittableDemands_{\spnode} \cup J^{\spnode}_1 = \{3, 4, 5\}, \;
		\unsplittableDemands_{\spnode} \cup J^{\spnode}_2 = \{2, 3, 4, 5\}, \;
		\unsplittableDemands_{\spnode} \cup J^{\spnode}_3 = \{3, 4, 5, 6\}, \;
		\unsplittableDemands_{\spnode} \cup J^{\spnode}_4 = \{2, 3, 4, 5, 6\},
	\]
	where the completely routed commodities $3, 4, 5$ are always routed through~$\spnode_6$. Since the demands are $d_2 = d_6 = 2$ and $d_3 = d_4 = d_5 = 1$ these routings induce total flows of $3$, $5$, $5$, and $7$, respectively. The total flow through the component represented by~$\spnode_6$ given in \cref{example} is $4.25$ (since $\spnode_6$ represents the parallel composition of the arcs~$e_3$ and~$e_4$ with flows $x_{e_3} = 2.25$ and $x_{e_4} = 2$). And, in fact, we have
	$
		4.25 = \tfrac{3}{8} \cdot 3 + \tfrac{3}{8} \cdot 5 + \tfrac{1}{4} \cdot 5 + 0 \cdot 7
	$.
	Notice that this only gives us information on how the weight of the convex decomposition must be distributed over the groups~$\flowgroup^{\spnode}_j$, but we do not immediately obtain the new convex decomposition with the respective unsplittable flows and the coefficients.
	This can only be obtained by combining and refining the convex decompositions of the child nodes.
\end{example}

\subsection{Recursive Refinements of Convex Decompositions}
We construct a convex decomposition of a given, almost unsplittable multiflow~$\vec{\mcflow}$ recursively over the nodes~$\spnode \in V_T$ of the $sp$-tree.
Formally, we show the following statement.
\begin{lemma}\label{lem:recursive:convex}
	Let $\vec{\mcflow}$ be an almost unsplittable multiflow. Then, for every node~$\spnode \in V_T$ of the $sp$-tree, there exist families of unsplittable multiflows $(\vec{Y}^{\spnode}_{\ell})_{\ell=1, \dotsc, \numflows_{\spnode}}$ in the subgraph~$G_{\spnode}$ and coefficients $(\rho^{\spnode}_{\ell})_{\ell=1, \dotsc, \numflows_{\spnode}}$ with the following properties.
	\begin{enumerate}[(i)]
		\item For every $\spnode \in V_T$,
			\[
			\vec{\mcflow}^{\spnode} = \sum_{\ell=1}^{\numflows_{\spnode}} \rho^{\spnode}_{\ell} \vec{Y}^{\spnode}_{\ell}
			,
			\]
			where $\vec{\mcflow}^{\spnode}$ is the restriction of the flow~$\vec{\mcflow}$ to the subgraph~$G_{\spnode}$.
		\item For every $\spnode \in V_T$, every flow~$\vec{Y}^{\spnode}_{\ell}, \ell= 1, \dotsc, \numflows_{\spnode}$ respects some routing option $j \in \{1,2,3,4\}$ in all nodes~$\spnode'$ that are descendants of~$\spnode$ in the $sp$-tree (including $\spnode$ itself).
	\end{enumerate}
\end{lemma}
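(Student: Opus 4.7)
The plan is to prove \Cref{lem:recursive:convex} by structural induction on the $sp$-tree~$T$, working from the leaves up to the root. At each node~$\spnode$, I construct the convex decomposition of $\vec{\mcflow}^\spnode$ by combining decompositions already produced for its children, so that property~(ii) is preserved essentially for free: each combined flow inherits its behavior at deeper descendants from the child flows. For the base case of a leaf~$\spnode$ representing an arc $e=(u_\spnode,v_\spnode)$, I define the (at most four) candidate unsplittable flows $\vec{Y}^\spnode_j$, $j\in\{1,2,3,4\}$, that route precisely $\unsplittableDemands_\spnode\cup J^\spnode_j$ through~$e$, and assign each the coefficient $\mu^\spnode_j$ from \Cref{lem:mu-values}. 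Property~(ii) is then immediate, and (i) follows from \Cref{lem:mu:consistency} applied at the singleton subtree.

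\emph{Inductive step, $s$-node.} Let $\spnode$ be an $s$-node with children $\spnode_1,\spnode_2$. Combining parts (b) and (c) of \Cref{obs:z-in-series-parallel} with the observation that any fractional commodity in a child must also be fractional in~$\spnode$ (otherwise (c) would force its child demand share into $\{0,1\}$), I obtain $\fractionalDemands_\spnode=\fractionalDemands_{\spnode_1}=\fractionalDemands_{\spnode_2}$ with matching $z$-values, so $\mu^{\spnode_1}_j=\mu^{\spnode_2}_j=\mu^\spnode_j$ for each $j$. Within each routing option, the subfamilies inherited from the two children therefore have equal total weight, and I pair them by a standard water-filling procedure: iteratively couple one $\spnode_1$-flow with one $\spnode_2$-flow, assign weight $\min(\rho^{\spnode_1}_{\ell_1},\rho^{\spnode_2}_{\ell_2})$, reduce the residuals, and form the corresponding unsplittable flow on $G_\spnode$ by series-concatenating the child routings. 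This yields $O(\numflows_{\spnode_1}+\numflows_{\spnode_2})$ new flows satisfying (i) and (ii) at $\spnode$ and at all deeper descendants.

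\emph{Inductive step, $p$-node; main obstacle.} The $p$-node case is the principal difficulty. By part (a) of \Cref{obs:z-in-series-parallel}, $z_{\spnode,i}=z_{\spnode_1,i}+z_{\spnode_2,i}$, so fractional commodities at~$\spnode$ may distribute across the children in several ways; moreover, by \Cref{def:almost-unsplittable}\,(ii) any commodity fractional in both children is the unique ``split'' commodity. I would enumerate the scenarios by tracking how $\ileft,\iright$ appear in $\fractionalDemands_{\spnode_1}$ and $\fractionalDemands_{\spnode_2}$ and whether the split commodity (if any) lies in $\fractionalDemands_\spnode$ or in $\unsplittableDemands_\spnode$. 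In each scenario I would establish an explicit correspondence between the routing options $j$ at~$\spnode$ and those pairs $(j_1,j_2)$ at the children such that the routed commodity sets of the two child flows partition $\unsplittableDemands_\spnode\cup J^\spnode_j$; each such pair then combines into a parallel-composed unsplittable flow on $G_\spnode$ respecting routing option~$j$ at~$\spnode$. A second round of water-filling synchronizes weights within each matched pair of groups, and \Cref{lem:mu:consistency} combined with the explicit values of \Cref{lem:mu-values} ensures the resulting total group weight at~$\spnode$ equals $\mu^\spnode_j$. The main obstacle will be the case-by-case bookkeeping to set up this correspondence and verify the arithmetic in every scenario; once that is done, (ii) at all deeper descendants is inherited directly from the induction hypothesis.
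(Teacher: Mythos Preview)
Your induction scheme matches the paper's exactly: leaves via the four routing options weighted by $\mu^\spnode_j$, $s$-nodes via water-filling within matching groups (using $\fractionalDemands_\spnode=\fractionalDemands_{\spnode_1}=\fractionalDemands_{\spnode_2}$), and $p$-nodes via enumerating compatible child-option pairs. The leaf and $s$-node treatments are essentially identical to the paper.

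For the $p$-node, however, you understate the central difficulty when you call it ``case-by-case bookkeeping.'' In the generic case ($\ileft,\imiddle,\iright$ distinct, $\imiddle$ split between the children), each parent routing option $J^\spnode_j$ arises from \emph{two} child pairs $(j_1,j_2)$---one for each placement of $\imiddle$---so there are eight pairs in all. You must assign nonnegative weights $\lambda_1,\dots,\lambda_8$ to these pairs so that (a)~the two $\lambda$'s feeding each parent group sum to $\mu^\spnode_j$, and simultaneously (b)~for each child $\spnode_c$ and each $j_c$, the two $\lambda$'s that draw from $K^{\spnode_c}_{j_c}$ sum to $\mu^{\spnode_c}_{j_c}$. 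This is a linear system of twelve equations in eight unknowns, and the substantive step is to exhibit a \emph{nonnegative} solution; the paper does this explicitly in \Cref{lem:lambdas} via a ``second maximum'' construction. Only once the $\lambda_i$ are fixed can you water-fill within each of the eight rows. Your phrase ``a second round of water-filling synchronizes weights within each matched pair of groups'' does not cover this apportionment step: water-filling cannot by itself decide how to divide a child group's mass between the two parent groups it contributes to, since those splits are coupled across the whole system. Invoking \Cref{lem:mu:consistency} and \Cref{lem:mu-values} is not enough here---they constrain the $\mu$'s, but the existence of compatible nonnegative $\lambda$'s is an additional fact you must prove.
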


Eventually, we can use \cref{lem:recursive:convex} for $\spnode = \spnode_0$ (where $\spnode_0$ is the root of the $sp$-tree) in order to prove that the desired convex decomposition in \cref{thm:2dmax} exists.
We continue by proving \cref{lem:recursive:convex} by induction over the nodes~$\spnode \in V_T$, starting from the leaf nodes.

\subsubsection{Convex decompositions in \texorpdfstring{$q$}{q}-nodes}
Let $\spnode \in V_T$ be a leaf node of the $sp$-tree, i.e., $\spnode$ represents some arc~$e \in E$. Since the subgraph~$G_{\spnode}$ is only the single arc~$e$, the desired flows in \cref{lem:recursive:convex} can be defined by only defining the flow on this single arc.
We define four unsplittable multiflows~$\vec{Y}^{\spnode}_{\ell}, \ell=1,2,3,4$ in $G_{\spnode}$ and the corresponding coefficients~$\rho_{\ell}, \ell=1,2,3,4$ by
\begin{align*}
	Y_{\ell, e, i} \coloneqq
	\begin{cases}
		d_i & \text{if } i \in \unsplittableDemands_\spnode \cup J^{\spnode}_{\ell}, \\
		0 & \text{otherwise}
	\end{cases}
	\quad \text{and} \quad
	\rho_{\ell} \coloneqq \mu^{\spnode}_{\ell}
	\qquad \text{for } \ell=1,2,3,4,
\end{align*}
where the numbers~$\mu^{\spnode}_{\ell}$ are the numbers defined in \eqref{eq:definition:mu}.
By definition, the flow $\vec{Y}^{\spnode}_{\ell}$ respects the routing option~$\ell$ in $\spnode$. Since~$\spnode$ is a leaf node, there are no further descendants in the~$sp$-tree and, hence, the flows satisfy property~\emph{(ii)} of \cref{lem:recursive:convex} for the node~$\spnode$. Since every flow group~$\flowgroup^{\spnode}_j$ consists of exactly one flow weighted by~$\mu^{\spnode}_j$, \cref{lem:mu:consistency} implies that property~\emph{(i)} holds as well.

\subsubsection{Convex refinements in \texorpdfstring{$s$}{s}-nodes}
Let $\spnode \in V_T$ be an $s$-node of the $sp$-tree. Further, assume (by induction) that the statement of \cref{lem:recursive:convex} is satisfied for both child nodes~$\spnode_1$ and~$\spnode_2$. By this assumption, there are families~$(\vec{Y}^{\spnode_1}_{\ell})_{\ell=1, \dotsc, \numflows_{\spnode_1}}$ and $(\rho^{\spnode_1}_{\ell})_{\ell=1, \dotsc, \numflows_{\spnode_1}}$ as well as $(\vec{Y}^{\spnode_2}_{\ell})_{\ell=1, \dotsc, \numflows_{\spnode_2}}$ and $(\rho^{\spnode_2}_{\ell})_{\ell=1, \dotsc, \numflows_{\spnode_2}}$ satisfying the conditions \cref{lem:recursive:convex}.
We proceed to construct a new convex decomposition for the $s$-node~$\spnode$ by combining the flows of the child nodes.
For an $s$-node this procedure is straightforward. First, we observe that since the subgraphs $G_{\spnode_1}$ and $G_{\spnode_2}$ are arc-disjoint, we can combine any two flows in these subgraphs easily by just combining the respective flow matrices.
By \cref{obs:z-in-series-parallel}\ref{obs:z-in-serial}, we have $\unsplittableDemands_{\spnode_1} = \unsplittableDemands_{\spnode_2} = \unsplittableDemands_{\spnode}$. Therefore, the routing options within these three nodes are exactly the same.
If we only combine flows $\vec{Y}^{\spnode_1}_{\ell_1}$ and $\vec{Y}^{\spnode_2}_{\ell_2}$ that both respect the same routing option $j \in \{1,2,3,4\}$, the resulting flows are feasible (in the subgraph~$G_{\spnode}$) and also respect the same routing option~$j \in \{1,2,3,4\}$.

Formally, we can use \cref{alg:linear:combination:refinement} from \cref{app:refinements} with~$\bar{\rho} = \mu^{\spnode}_j$ (from \eqref{eq:definition:mu}) to refine the non-negative linear combinations $\sum_{\ell \in \flowgroup^{\spnode_1}_j} \rho^{\spnode_1}_{\ell} \vec{Y}^{\spnode_1}_{\ell}$ and $\sum_{\ell \in \flowgroup^{\spnode_2}_j} \rho^{\spnode_2}_{\ell} \vec{Y}^{\spnode_2}_{\ell}$ of flows in the respective groups~$\flowgroup^{\spnode_1}_j$ and~$\flowgroup^{\spnode_2}_j$ yielding a non-negative linear combination of flows in group~$\flowgroup^{\spnode}_j$ for the parent node~$\spnode$.
By this construction, all flows in group~$\flowgroup^{\spnode}_j$ are weighted with $\bar{\rho} = \mu^{\spnode}_j$. Therefore, \cref{lem:mu:consistency} implies that the new convex decomposition satisfies~\emph{(i)} from \cref{lem:recursive:convex} for the parent node~$\spnode$. Property~\emph{(ii)} is satisfied by definition for~$\spnode$ and inherited from the flows from the child nodes for all descendants.

The following example illustrates the procedure described above.

\begin{example} \label{s-example}
	We introduce an example for the combination of the convex decompositions in an $s$-node~$\spnode$ with child nodes $\spnode_1$ and $\spnode_2$.
	For this example, we assume that we are given the convex decompositions of the child nodes. In particular, we assume that the convex combinations for child~$\spnode_1$ is as follows.
	\begin{align*}
		{\vec{\mcflow}}^{\spnode_1} &=
		\sum_{\ell = 1}^{\numflows_{\spnode_1}} \rho^{\spnode_1}_\ell \vec{Y}^{\spnode_1}_\ell
		=
		\colorbrace{color1}{
		\tfrac{1}{10} \vec{Y}^{\spnode_1}_1
		+
		\tfrac{1}{10} \vec{Y}^{\spnode_1}_2
		}{\substack{\text{group }\flowgroup^{\spnode_1}_1 \\ \mu^{\spnode_1}_1 = \frac{1}{10} + \frac{1}{10} = \frac{1}{5} }}
		+
		\colorbrace{color2}{
		\tfrac{1}{10} \vec{Y}^{\spnode_1}_3 +
		\tfrac{3}{10} \vec{Y}^{\spnode_1}_4
		}{\substack{\text{group }\flowgroup^{\spnode_1}_2 \\ \mu^{\spnode_1}_2 = \frac{1}{10} + \frac{3}{10} = \frac{2}{5} }}
		+
		\colorbrace{color3}{
		\tfrac{2}{25} \vec{Y}^{\spnode_1}_5 +
		\tfrac{4}{25} \vec{Y}^{\spnode_1}_6 +
		\tfrac{4}{25} \vec{Y}^{\spnode_1}_7
		}{\substack{\text{group }\flowgroup^{\spnode_1}_3 \\ \mu^{\spnode_1}_3 = \frac{2}{25} + \frac{4}{25} + \frac{4}{25} = \frac{2}{5} }}
	\intertext{For child~$\spnode_2$, we assume that the convex combinations is as follows.}
		{\vec{\mcflow}}^{\spnode_2} &=
		\sum_{\ell = 1}^{\numflows_{\spnode_2}} \rho^{\spnode_2}_\ell \vec{Y}^{\spnode_2}_\ell
		=
		\colorbrace{color5}{
		\tfrac{1}{20} \vec{Y}^{\spnode_2}_1
		+
		\tfrac{3}{20} \vec{Y}^{\spnode_2}_2
		}{\substack{\text{group }\flowgroup^{\spnode_2}_1 \\ \mu^{\spnode_2}_1 = \frac{1}{20} + \frac{3}{20} = \frac{1}{5} }}
		+
		\colorbrace{color6}{
		\tfrac{3}{10} \vec{Y}^{\spnode_2}_3 +
		\tfrac{1}{10} \vec{Y}^{\spnode_2}_4
		}{\substack{\text{group }\flowgroup^{\spnode_2}_2 \\ \mu^{\spnode_2}_2 = \frac{3}{10} + \frac{1}{10} = \frac{2}{5} }}
		+
		\colorbrace{color7}{
		\tfrac{4}{25} \vec{Y}^{\spnode_2}_5 +
		\tfrac{6}{25} \vec{Y}^{\spnode_2}_6
		}{\substack{\text{group }\flowgroup^{\spnode_2}_3 \\ \mu^{\spnode_2}_3 = \frac{4}{25} + \frac{6}{25} = \frac{2}{5} }}
	\end{align*}
	Both convex decompositions are depicted in the upper part of \cref{fig:refinements:serial}. Notice, for both convex decomposition of the child  nodes, there is no flow in the fourth group. In fact, we know from \cref{lem:mu-values}, that at least one group has weight zero which is assumed to be the fourth group in this example. Additionally, notice that the total weights of the groups are the same, i.e. $\mu^{\spnode_1}_{j} = \mu^{\spnode_2}_{j}$ for $j=1,2,3,4$ which is implied by \cref{obs:z-in-series-parallel}.

	In the parent $s$-node~$\spnode$, the flows of the respective groups of the children are combined with \cref{alg:linear:combination:refinement} yielding the following linear combinations for the groups, where $\begin{bmatrix} \vec{Y}^{\spnode_1}_{\ell_1} \\ \vec{Y}^{\spnode_2}_{\ell_2}  \end{bmatrix}$ denotes the combined flow matrix if the flows $\vec{Y}^{\spnode_1}_{\ell_1}$ and $\vec{Y}^{\spnode_2}_{\ell_2}$ are combined.
	\begin{align*}
		\tilde{\vec{\mcflow}}^{\spnode, 1} &=
		\sum_{\ell=1}^3 \rho^{\spnode}_\ell \vec{Y}^{\spnode}_\ell
		=
		\frac{1}{20} \begin{bmatrix} \vec{Y}^{\spnode_1}_1 \\ \vec{Y}^{\spnode_2}_1  \end{bmatrix} +
		\frac{1}{20} \begin{bmatrix} \vec{Y}^{\spnode_1}_1 \\ \vec{Y}^{\spnode_2}_2 \end{bmatrix} +
		\frac{1}{10} \begin{bmatrix} \vec{Y}^{\spnode_1}_2 \\ \vec{Y}^{\spnode_2}_2  \end{bmatrix} \\
		\tilde{\vec{\mcflow}}^{\spnode, 2} &=
		\sum_{\ell=3}^5 \rho^{\spnode}_\ell \vec{Y}^{\spnode}_\ell
		=
		\frac{1}{10} \begin{bmatrix} \vec{Y}^{\spnode_1}_3 \\ \vec{Y}^{\spnode_2}_3  \end{bmatrix} +
		\frac{2}{10} \begin{bmatrix} \vec{Y}^{\spnode_1}_4 \\ \vec{Y}^{\spnode_2}_3  \end{bmatrix} +
		\frac{1}{10} \begin{bmatrix} \vec{Y}^{\spnode_1}_4 \\ \vec{Y}^{\spnode_2}_4  \end{bmatrix} \\
		\tilde{\vec{\mcflow}}^{\spnode, 3} &=
		\sum_{\ell=6}^{10} \rho^{\spnode}_\ell \vec{Y}^{\spnode}_\ell
		=
		\frac{2}{25} \begin{bmatrix} \vec{Y}^{\spnode_1}_5 \\ \vec{Y}^{\spnode_2}_5  \end{bmatrix} +
		\frac{2}{25} \begin{bmatrix} \vec{Y}^{\spnode_1}_6 \\ \vec{Y}^{\spnode_2}_5  \end{bmatrix} +
		\frac{2}{25} \begin{bmatrix} \vec{Y}^{\spnode_1}_6 \\ \vec{Y}^{\spnode_2}_6  \end{bmatrix} +
		\frac{4}{25} \begin{bmatrix} \vec{Y}^{\spnode_1}_7 \\ \vec{Y}^{\spnode_2}_6  \end{bmatrix}
	\end{align*}
	Overall, we obtain the convex decomposition
	\[
		\sum_{\ell=1}^{10} \rho^{\spnode}_\ell \vec{Y}^{\spnode}_\ell
		=
		\tilde{\vec{\mcflow}}^{\spnode, 1} + \tilde{\vec{\mcflow}}^{\spnode, 2} + \tilde{\vec{\mcflow}}^{\spnode, 3}
		=
		\begin{bmatrix}
			\sum_{\ell = 1}^{\numflows_{\spnode_1}} \rho^{\spnode_1}_\ell \vec{Y}^{\spnode_1}_\ell \\
			\sum_{\ell = 1}^{\numflows_{\spnode_2}} \rho^{\spnode_2}_\ell \vec{Y}^{\spnode_2}_\ell
		\end{bmatrix}
		=
		\begin{bmatrix}
			\vec{\mcflow}^{\spnode_1} \\
			\vec{\mcflow}^{\spnode_2}
		\end{bmatrix}
		=
		\vec{\mcflow}^{\spnode}
	\]

\end{example}

\def\convboxwidth{7}
\def\convboxheight{1em}
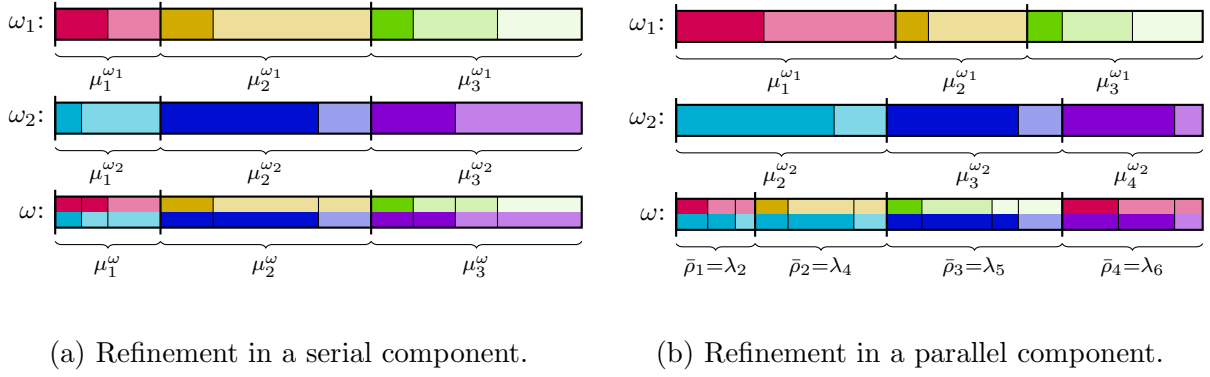
\begin{figure}[t]%
	\begin{subfigure}{.5\textwidth}
		\begin{center}
		\begin{tikzpicture}[xscale={\linewidth/8cm}]

			\node[anchor=east] at (0,.5*\convboxheight) {\footnotesize$\spnode_1$:};

			\foreach \subdivision/\mustart/\muend/\lbl in {
				{0/.5/color1,.5/1/color1!50}/{0}/{.2}/{\mu^{\spnode_1}_1},%
				{0/.25/color2,.25/1/color2!40}/{.2}/{.6}/{\mu^{\spnode_1}_2},%
				{0/.2/color3,.2/.6/color3!30,.6/1/color3!10}/{.6}/{1}/{\mu^{\spnode_1}_3}%
			} {
				\foreach \alphastart/\alphaend/\col in \subdivision {
					\fill[\col] ({(\mustart+\alphastart*(\muend-\mustart))*\convboxwidth}, 0) rectangle ++({(\muend-\mustart)*(\alphaend-\alphastart)*\convboxwidth},\convboxheight);
					\draw ({(\mustart+\alphastart*(\muend-\mustart))*\convboxwidth}, 0) -- ++(0, \convboxheight);
				}

				\draw[thick] (\mustart*\convboxwidth, -.5ex) -- ++(0, \convboxheight+1ex);
				\draw[decorate, decoration={brace, mirror}] ({\mustart*\convboxwidth}, -1ex) -- ++({(\muend - \mustart)*\convboxwidth}, 0) node[midway, below=.25ex] {\scriptsize $\lbl$};
			}

			\draw[thick] (0,0) rectangle (\convboxwidth, \convboxheight);


			\begin{scope}[shift={(0,-1.25)}]
			\node[anchor=east] at (0,.5*\convboxheight) {\footnotesize$\spnode_2$:};

			\foreach \subdivision/\mustart/\muend/\lbl in {
				{0/.25/color5,.25/1/color5!50}/{0}/{.2}/{\mu^{\spnode_2}_1},%
				{0/.75/color6,.75/1/color6!40}/{.2}/{.6}/{\mu^{\spnode_2}_2},%
				{0/.4/color7,.4/1/color7!50}/{.6}/{1}/{\mu^{\spnode_2}_3}%
			} {
				\foreach \alphastart/\alphaend/\col in \subdivision {
					\fill[\col] ({(\mustart+\alphastart*(\muend-\mustart))*\convboxwidth}, 0) rectangle ++({(\muend-\mustart)*(\alphaend-\alphastart)*\convboxwidth},\convboxheight);
					\draw ({(\mustart+\alphastart*(\muend-\mustart))*\convboxwidth}, 0) -- ++(0, \convboxheight);
				}

				\draw[thick] (\mustart*\convboxwidth, -.5ex) -- ++(0, \convboxheight+1ex);
				\draw[decorate, decoration={brace, mirror}] ({\mustart*\convboxwidth}, -1ex) -- ++({(\muend - \mustart)*\convboxwidth}, 0) node[midway, below=.25ex] {\scriptsize $\lbl$};
			}

			\draw[thick] (0,0) rectangle (\convboxwidth, \convboxheight);

			\end{scope}


			\begin{scope}[shift={(0,-2.5)}]
				\node[anchor=east] at (0,.5*\convboxheight) {\footnotesize$\spnode$:};

				\foreach \subdivisionA/\subdivisionB/\mustart/\muend/\lbl in {
					{0/.5/color1,.5/1/color1!50}/{0/.25/color5,.25/1/color5!50}/{0}/{.2}/{\mu^{\spnode}_1},%
					{0/.25/color2,.25/1/color2!40}/{0/.75/color6,.75/1/color6!40}/{.2}/{.6}/{\mu^{\spnode}_2},%
					{0/.2/color3,.2/.6/color3!30,.6/1/color3!10}/{0/.4/color7,.4/1/color7!50}/{.6}/{1}/{\mu^{\spnode}_3}%
				} {

					\foreach \alphastart/\alphaend/\col in \subdivisionA {
						\fill[\col] ({(\mustart+\alphastart*(\muend-\mustart))*\convboxwidth}, 0.5*\convboxheight) rectangle ++({(\muend-\mustart)*(\alphaend-\alphastart)*\convboxwidth},0.5*\convboxheight);
					}
					\foreach \alphastart/\alphaend/\col in \subdivisionB {
						\fill[\col] ({(\mustart+\alphastart*(\muend-\mustart))*\convboxwidth}, 0) rectangle ++({(\muend-\mustart)*(\alphaend-\alphastart)*\convboxwidth},0.5*\convboxheight);
						\draw ({(\mustart+\alphastart*(\muend-\mustart))*\convboxwidth}, 0) -- ++(0, \convboxheight);
					}
					\foreach \alphastart/\alphaend/\col in \subdivisionA {
						\draw ({(\mustart+\alphastart*(\muend-\mustart))*\convboxwidth}, 0) -- ++(0, \convboxheight);
					}

					\draw[thick] (\mustart*\convboxwidth, -.5ex) -- ++(0, \convboxheight+1ex);
					\draw[decorate, decoration={brace, mirror}] ({\mustart*\convboxwidth}, -1ex) -- ++({(\muend - \mustart)*\convboxwidth}, 0) node[midway, below=.25ex] {\scriptsize $\lbl$};
				}

				\draw[thick] (0,0) rectangle (\convboxwidth, \convboxheight);

				\end{scope}
		\end{tikzpicture}
		\end{center}
		\caption{Refinement in a serial component.}\label{fig:refinements:serial}
	\end{subfigure}%
	\begin{subfigure}{.5\textwidth}
		\begin{center}
		\begin{tikzpicture}[xscale={\linewidth/8cm}]
            \node[anchor=east] at (0,.5*\convboxheight) {\footnotesize$\spnode_1$:};

            \newcounter{muCounter} 
            \setcounter{muCounter}{1} 

            \foreach \subdivision/\mustart/\muend in {
            {0/.4/color1,.4/1/color1!50}/{0}/{5/12},%
            {0/.25/color2,.25/1/color2!40}/{5/12}/{2/3},%
            {0/.2/color3,.2/.6/color3!30,.6/1/color3!10}/{2/3}/{1}%
            } {
            \gdef\currentMu{\mu_{\arabic{muCounter}}^{\spnode_1}}

            \foreach \alphastart/\alphaend/\col in \subdivision {
                \fill[\col] ({(\mustart+\alphastart*(\muend-\mustart))*\convboxwidth}, 0)
                    rectangle ++({(\muend-\mustart)*(\alphaend-\alphastart)*\convboxwidth},\convboxheight);
                \draw ({(\mustart+\alphastart*(\muend-\mustart))*\convboxwidth}, 0) -- ++(0, \convboxheight);
            }

            \draw[thick] (\mustart*\convboxwidth, -.5ex) -- ++(0, \convboxheight+1ex);
            \draw[decorate, decoration={brace, mirror}] ({\mustart*\convboxwidth}, -1ex)
                -- ++({(\muend - \mustart)*\convboxwidth}, 0) node[midway, below=.25ex]
                {\scriptsize $\currentMu$};

            \stepcounter{muCounter}
            }

            \draw[thick] (0,0) rectangle (\convboxwidth, \convboxheight);


            \begin{scope}[shift={(0,-1.25)}]
            \node[anchor=east] at (0,.5*\convboxheight) {\footnotesize$\spnode_2$:};

            \newcounter{muCounter2} 
            \setcounter{muCounter2}{2} 

            \foreach \subdivision/\mustart/\muend in {
            {0/.75/color5,.75/1/color5!50}/{0}/{2/5},%
            {0/.75/color6,.75/1/color6!40}/{2/5}/{11/15},%
            {0/.8/color7,.8/1/color7!50}/{11/15}/{1}%
            } {
            \gdef\currentMu{\mu_{\arabic{muCounter2}}^{\spnode_2}}

            \foreach \alphastart/\alphaend/\col in \subdivision {
                \fill[\col] ({(\mustart+\alphastart*(\muend-\mustart))*\convboxwidth}, 0)
                    rectangle ++({(\muend-\mustart)*(\alphaend-\alphastart)*\convboxwidth},\convboxheight);
                \draw ({(\mustart+\alphastart*(\muend-\mustart))*\convboxwidth}, 0) -- ++(0, \convboxheight);
            }

            \draw[thick] (\mustart*\convboxwidth, -.5ex) -- ++(0, \convboxheight+1ex);
            \draw[decorate, decoration={brace, mirror}] ({\mustart*\convboxwidth}, -1ex)
                -- ++({(\muend - \mustart)*\convboxwidth}, 0) node[midway, below=.25ex]
                {\scriptsize $\currentMu$};

            \stepcounter{muCounter2}
            }

            \draw[thick] (0,0) rectangle (\convboxwidth, \convboxheight);

            \end{scope}


            \begin{scope}[shift={(0,-2.5)}]
            \node[anchor=east] at (0,.5*\convboxheight) {\footnotesize$\spnode$:};

            \newcounter{rhoCounter} 
            \setcounter{rhoCounter}{1} 

            \foreach \subdivisionA/\subdivisionB/\mustart/\muend/\lbl in {
            {0/.4/color1,.4/1/color1!50}/{0/.75/color5,.75/1/color5!50}/{0}/{3/20}/{\bar{\rho}_1 {=} \lambda_2},%
            {0/.25/color2,.25/1/color2!40}/{0/.75/color5,.75/1/color5!50}/{3/20}/{2/5}/{\bar{\rho}_2 {=} \lambda_4},%
            {0/.2/color3,.2/.6/color3!30,.6/1/color3!10}/{0/.75/color6,.75/1/color6!40}/{2/5}/{11/15}/{\bar{\rho}_3 {=} \lambda_5},%
            {0/.4/color1,.4/1/color1!50}/{0/.8/color7,.8/1/color7!50}/{11/15}/{1}/{\bar{\rho}_4 {=} \lambda_6}%
            } {
            \gdef\currentRho{\bar{\rho}_{\arabic{rhoCounter}}}

            \foreach \alphastart/\alphaend/\col in \subdivisionA {
                \fill[\col] ({(\mustart+\alphastart*(\muend-\mustart))*\convboxwidth}, 0.5*\convboxheight)
                    rectangle ++({(\muend-\mustart)*(\alphaend-\alphastart)*\convboxwidth},0.5*\convboxheight);
            }
            \foreach \alphastart/\alphaend/\col in \subdivisionB {
                \fill[\col] ({(\mustart+\alphastart*(\muend-\mustart))*\convboxwidth}, 0)
                    rectangle ++({(\muend-\mustart)*(\alphaend-\alphastart)*\convboxwidth},0.5*\convboxheight);
                \draw ({(\mustart+\alphastart*(\muend-\mustart))*\convboxwidth}, 0) -- ++(0, \convboxheight);
            }
            \foreach \alphastart/\alphaend/\col in \subdivisionA {
                \draw ({(\mustart+\alphastart*(\muend-\mustart))*\convboxwidth}, 0) -- ++(0, \convboxheight);
            }

            \draw[thick] (\mustart*\convboxwidth, -.5ex) -- ++(0, \convboxheight+1ex);
            \draw[decorate, decoration={brace, mirror}] ({\mustart*\convboxwidth}, -1ex)
                -- ++({(\muend - \mustart)*\convboxwidth}, 0) node[midway, below=.25ex]
                {\scriptsize $\lbl$};

            \stepcounter{rhoCounter}
            }

            \draw[thick] (0,0) rectangle (\convboxwidth, \convboxheight);

            \end{scope}

		\end{tikzpicture}
		\end{center}
		\caption{Refinement in a parallel component.}\label{fig:refinements:parallel}
	\end{subfigure}%
	\caption{Representation of the refinement of convex decompositions in $s$- and $p$-nodes.
	In both subfigures, every row represents the convex decomposition in the respective node. Every color (red, orange, green, blue \dots) represents a group~$\flowgroup_j$ of flows, while the different shades represent different flows within these groups. The length of the rectangles indicates the respective weight~$\rho_\ell$ in the convex combination. In the row corresponding to the parent node~$\spnode$, the combination of colors indicates which flows are combined.}\label{fig:refinements}
\end{figure}

\subsubsection{Convex refinements in \texorpdfstring{$p$}{p}-nodes}

Let $\spnode \in V_T$ be a $p$-node and again assume (by induction) that the statement of \cref{lem:recursive:convex} is satisfied for both child nodes~$\spnode_1$ and~$\spnode_2$. As for $s$-node, we also need to combine the families~$(\vec{Y}^{\spnode_1}_{\ell})_{\ell=1, \dotsc, \numflows_{\spnode_1}}$ and $(\rho^{\spnode_1}_{\ell})_{\ell=1, \dotsc, \numflows_{\spnode_1}}$ as well as $(\vec{Y}^{\spnode_2}_{\ell})_{\ell=1, \dotsc, \numflows_{\spnode_2}}$ and $(\rho^{\spnode_2}_{\ell})_{\ell=1, \dotsc, \numflows_{\spnode_2}}$ to a new convex decomposition of the given multiflow~$\vec{\mcflow}^{\spnode}$ restricted to the subgraph~$G_{\spnode}$.

In contrast to the $s$-nodes considered before, the situation is more complex for a $p$-node~$\spnode$. By definition of almost unsplittable flows, there is (at most) one commodity in~$\fractionalDemands_{\spnode_1} \cap \fractionalDemands_{\spnode_2}$ that is split between~$\spnode_1$ and~$\spnode_2$.
For simplicity, we assume that~$|\fractionalDemands_{\spnode_1} \cap \fractionalDemands_{\spnode_2}| = 1$, i.e., we assume that a split commodity exists. We refer to this commodity by~$\imiddle$. We also assume that~$\imiddle \notin\{\ileft, \iright\}$. In \Cref{app:special-cases}, we show how to take care of the remaining (simpler) cases where these assumptions do not hold. Finally, we may assume without loss of generality that~$\fractionalDemands_{\spnode_1}=\{\ileft,\imiddle\}$ and~$\fractionalDemands_{\spnode_2}=\{\imiddle,\iright\}$. That is,~$\ileft[\spnode_1]=\ileft$, $\iright[\spnode_1]=\imiddle=\ileft[\spnode_2]$, and~$\iright[\spnode_2]=\iright$, this illustrated in the next example:

\begin{example} \label{example3}
Consider the node~$\spnode_6$ of the $sp$-tree from the original \Cref{example}. Note that~$\spnode_6$ is a $p$-node with children~$\spnode_7$ and~$\spnode_8$. For this node, we have
\begin{align*}
& \vec{z}^{\spnode_6} (\vec{\mcflow}) = (0, \tfrac{3}{8}, 1, 1, 1, \tfrac{1}{4}, 0, 0),
& \fractionalDemands_{\spnode_6} (\vec{\mcflow}) &=  \{2, 6 \},\\
& \vec{z}^{\spnode_7} (\vec{\mcflow}) = (0, \tfrac{3}{8}, 1, \tfrac{1}{2}, 0, 0, 0, 0),
& \fractionalDemands_{\spnode_7} (\vec{\mcflow}) &= \{2, 4 \},\\
& \vec{z}^{\spnode_8} (\vec{\mcflow}) = (0, 0, 0, \tfrac{1}{2}, 1, \tfrac{1}{4}, 0, 0),
& \fractionalDemands_{\spnode_8} (\vec{\mcflow}) &= \{4, 6 \}.
\end{align*}
Therefore, we can denote $\ileft[\spnode_6] = 2=\ileft[\spnode_7]$, $\imiddle[\spnode_6] = 4=\iright[\spnode_7]=\ileft[\spnode_8]$, and $\iright[\spnode_6] = 6=\iright[\spnode_8]$.
\end{example}

With the additional choice of whether to route~$\imiddle$ through either~$\spnode_1$ or $\spnode_2$, we obtain for every set~$J^{\spnode}_j$ in the parent component two additional possibilities for the child nodes. This yields in total eight possible combinations of the routings in a parallel component. %
\begin{table}[t]
	\centering\small
	\renewcommand{\arraystretch}{1.15}
	\begin{tabular}{l>{\color{gray}}c>{\color{gray}}c|l>{\color{gray}}c>{\color{gray}}c|l>{\color{gray}}c>{\color{gray}}c|c}
		\multicolumn{3}{c}{parent~$\spnode$} & \multicolumn{3}{c}{child~$\spnode_1$} & \multicolumn{3}{c}{child~$\spnode_2$} &
		\\
		\multicolumn{1}{c}{routing} & $\mu$ & group &
		\multicolumn{1}{c}{routing} & $\mu$ & group &
		\multicolumn{1}{c}{routing} & $\mu$ & group &
		\\
		\hline
		\multirow{2}{*}{$J^{\spnode}_1 = \emptyset$} &
		\multirow{2}{*}{$\mu^{\spnode}_1$} &
		\multirow{2}{*}{$K^{\spnode}_1$}
		& $J^{\spnode_1}_3 = \{\imiddle\}$ & $\mu^{\spnode_1}_3$ & $K^{\spnode_1}_3$
		&$J^{\spnode_2}_1 = \emptyset$ & $\mu^{\spnode_2}_1$ & $K^{\spnode_2}_1$ &
		$\lambda_1$
		\\
		&&
		& $J^{\spnode_1}_1 = \emptyset$ & $\mu^{\spnode_1}_1$ & $K^{\spnode_1}_1$
		& $J^{\spnode_2}_2 = \{\imiddle\}$ & $\mu^{\spnode_2}_2$ & $K^{\spnode_2}_2$ &
		$\lambda_2$
		\\
		\hline
		\multirow{2}{*}{$J^{\spnode}_2 = \{\ileft\}$} &
		\multirow{2}{*}{$\mu^{\spnode}_2$} &
		\multirow{2}{*}{$K^{\spnode}_2$}
		& $J^{\spnode_1}_4 = \{\ileft, \imiddle\}$ & $\mu^{\spnode_1}_4$ & $K^{\spnode_1}_4$
		& $J^{\spnode_2}_1 = \emptyset$ & $\mu^{\spnode_2}_1$ & $K^{\spnode_2}_1$ &
		$\lambda_3$
		\\
		&&
		& $J^{\spnode_1}_2 = \{\ileft\}$ & $\mu^{\spnode_1}_2$ & $K^{\spnode_1}_2$
		& $J^{\spnode_2}_2 = \{\imiddle\}$ & $\mu^{\spnode_2}_2$ & $K^{\spnode_2}_2$ &
		$\lambda_4$
		\\
		\hline
		\multirow{2}{*}{$J^{\spnode}_3 = \{\iright\}$} &
		\multirow{2}{*}{$\mu^{\spnode}_3$} &
		\multirow{2}{*}{$K^{\spnode}_3$}
		& $J^{\spnode_1}_3 = \{\imiddle\}$ & $\mu^{\spnode_1}_3$ & $K^{\spnode_1}_3$
		& $J^{\spnode_2}_3 = \{\iright\}$ & $\mu^{\spnode_2}_3$ & $K^{\spnode_2}_3$ &
		$\lambda_5$
		\\
		&&
		& $J^{\spnode_1}_1 = \emptyset$ & $\mu^{\spnode_1}_1$ & $K^{\spnode_1}_1$
		& $J^{\spnode_2}_4 = \{\imiddle, \iright\}$ & $\mu^{\spnode_2}_4$ & $K^{\spnode_2}_4$ &
		$\lambda_6$
		\\
		\hline
		\multirow{2}{*}{$J^{\spnode}_4 = \{\ileft, \iright\}$} &
		\multirow{2}{*}{$\mu^{\spnode}_4$} &
		\multirow{2}{*}{$K^{\spnode}_4$}
		& $J^{\spnode_1}_4 = \{\ileft, \imiddle\}$ & $\mu^{\spnode_1}_4$ & $K^{\spnode_1}_4$
		& $J^{\spnode_2}_3 = \{\iright\}$ & $\mu^{\spnode_2}_3$ & $K^{\spnode_2}_3$ &
		$\lambda_7$
		\\
		&&
		& $J^{\spnode_1}_2 = \{\ileft\}$ & $\mu^{\spnode_1}_2$ & $K^{\spnode_1}_2$
		& $J^{\spnode_2}_4 = \{\imiddle, \iright\}$ & $\mu^{\spnode_2}_4$ & $K^{\spnode_2}_4$ &
		$\lambda_8$
	\end{tabular}%
\renewcommand{\arraystretch}{1}%
\caption{Routing options in a parallel component. Every routing option~$J^{\spnode}_j$ of the parent node can be achieved by two combinations of routing options~$J^{\spnode_1}_{j_1}$ and~$J^{\spnode_2}_{j_2}$ in the child nodes.
Each of the eight routings has a weight~$\lambda_i$ that must be consistent with the weights~$\mu^{\spnode_i}_{j_i}$ of the respective routings in the child nodes.}%
\label{tab:parallel:splitting}%
\end{table}
These eight cases are given in \cref{tab:parallel:splitting} showing which groups of flows of the child nodes must be combined and which weights need to be used. For example, the first row states that we need to combine flows from group~$K^{\spnode_1}_3$ of the first child and flows from the group~$K^{\spnode_2}_1$ of the second child, assign a total weight of $\lambda_1$ to these flows and then assign the resulting flows in group~$K^{\spnode}_1$ of the parent node.
\Cref{tab:parallel:splitting} additionally imposes constraints on the weights~$\lambda_i, i=1,\dotsc,8$ of the eight possible routing options in a parallel component. In particular, the weights of any two routing options that correspond to a fixed routing option of the components~$\spnode, \spnode_1,$ or $\spnode_2$ must sum up to the respective~$\mu_j$ value.
For example, the routing option $J^{\spnode_1}_1 = \emptyset$ in the first child~$\spnode$ occurs in the options $i=2$ and $i=6$. Therefore, $\lambda_2 + \lambda_6 = \mu^{\spnode_1}_1$. Overall, we obtain twelve constraints from \cref{tab:parallel:splitting}.
Formally, we want to combine flows from the respective groups and assign a total weight~$\lambda_i$ to all combinations of one row of \cref{tab:parallel:splitting}. The weights~$\lambda_1, \dotsc, \lambda_8 \in [0,1]$ must satisfy~$\sum_{j=1}^8 \lambda_j = 1$ as well as
\begin{align}
	\mu^{\spnode}_1 &= \lambda_1 + \lambda_2, \qquad\qquad
	\mu^{\spnode_1}_1 = \lambda_2 + \lambda_6, \qquad\qquad
	\mu^{\spnode_2}_1 = \lambda_1 + \lambda_3,
	\notag\\
	\mu^{\spnode}_2 &= \lambda_3 + \lambda_4, \qquad\qquad
	\mu^{\spnode_1}_2 = \lambda_4 + \lambda_8, \qquad\qquad
	\mu^{\spnode_2}_2 = \lambda_2 + \lambda_4,
	\notag\\
	\mu^{\spnode}_3 &= \lambda_5 + \lambda_6, \qquad\qquad
	\mu^{\spnode_1}_3 = \lambda_1 + \lambda_5, \qquad\qquad
	\mu^{\spnode_2}_3 = \lambda_5 + \lambda_7,
	\label{eq:linear-system}
	\\
	\mu^{\spnode}_4 &= \lambda_7 + \lambda_8, \qquad\qquad
	\mu^{\spnode_1}_4 = \lambda_3 + \lambda_7, \qquad\qquad
	\mu^{\spnode_2}_4 = \lambda_6 + \lambda_8.
	\notag
\end{align}

We can solve the system~\eqref{eq:linear-system} in a straightforward way. For ease of notation, we introduce the notion of the \emph{second maximum} of a family of numbers $a_1, \dotsc, a_n \in \R$. Formally, we define the second maximum as $\smax (a_1, \dotsc, a_n) \coloneqq a_{\sigma(n-1)}$, where $\sigma$ is a permutation such that $a_{\sigma(i)} \leq a_{\sigma(j)}$ if $i < j$. Notice that $\smax$ denotes the value of the second to last number in the ordering, but not necessarily the second largest of all values. Hence, it can be the case that $\smax(a_1, \dotsc, a_n) = \max\{a_1, \dotsc, a_n\}$ if $a_{\sigma(n-1)} = a_{\sigma(n)}$.
\begin{lemma}\label{lem:lambdas}
	Let $\spnode$ be a $p$-node in the $sp$-tree with child nodes $\spnode_1$ and $\spnode_2$. Denote by $z_{p} \coloneqq z_{\spnode, \ileft}$, $z_{q} \coloneqq z_{\spnode, \iright}$, and $z_{r} \coloneqq z_{\spnode_2, \imiddle}$ and let $M \coloneqq \smax(z_p, z_r, 1 - z_q)$ be the second maximum of these numbers. Then, the numbers
	\begin{align*}
		\lambda_1 &= [(1 - z_q) - M]^+
		&
		\lambda_2 &= [M - z_p]^+
		&
		\lambda_3 &= [M - z_r]^+
		\\
		\lambda_4 &= \min \{z_p, z_r, 1 - z_q\}
		&
		\lambda_5 &= 1 - \max \{z_p, z_r, 1 - z_q\}
		&
		\lambda_6 &= [z_r - M]^+
		\\
		\lambda_7 &= [z_p - M]^+
		&
		\lambda_8 &= [M - (1 - z_q)]^+
	\end{align*}
	satisfy $\lambda_i \in [0, 1]$ and sum up to $1$. Together with the numbers defined in~\eqref{eq:definition:mu} they are a solution to the system~\eqref{eq:linear-system}.
\end{lemma}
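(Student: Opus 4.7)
The plan is to introduce the abbreviations $a \coloneqq z_p$, $b \coloneqq z_r$, $c \coloneqq 1 - z_q$, all in $[0,1]$, so that $M = \smax(a,b,c)$ is the median of $\{a,b,c\}$ and in particular $M \in \{a,b,c\}$. With this notation, the formulas in~\eqref{eq:definition:mu} applied to~$\spnode$ become $\mu^{\spnode}_1 = [c-a]^+$, $\mu^{\spnode}_2 = \min(a,c)$, $\mu^{\spnode}_3 = 1 - \max(a,c)$, $\mu^{\spnode}_4 = [a-c]^+$. Using \cref{obs:z-in-series-parallel} together with the fact that $z_{\spnode,\imiddle}=1$ (since $\imiddle\notin\fractionalDemands_{\spnode}$ but is fractional in both children), one obtains $z_{\spnode_1,\imiddle} = 1-b$, so that the analogous formulas for $\mu^{\spnode_1}$ depend only on $(a,b)$ and those for $\mu^{\spnode_2}$ only on $(b,c)$.

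\textbf{Range and sum.} Each $\lambda_i$ is either a $\min$ or $1-\max$ of values in $[0,1]$, or a positive part of a difference of such values, so $\lambda_i\in[0,1]$. For the total, I would pair the terms as $(\lambda_2,\lambda_7)$, $(\lambda_3,\lambda_6)$, $(\lambda_1,\lambda_8)$, with respective sums $|M-a|$, $|M-b|$, $|M-c|$. Since $M$ equals one of $a,b,c$ and lies between the other two, one of these absolute values vanishes and the remaining two sum to $\max(a,b,c)-\min(a,b,c)$. Adding $\lambda_4+\lambda_5=\min(a,b,c)+1-\max(a,b,c)$ yields $\sum_{i=1}^{8}\lambda_i = 1$.

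\textbf{The twelve equations.} The six equations involving $\mu_1$ and $\mu_4$ (for $\spnode$, $\spnode_1$, and $\spnode_2$) all reduce to the identity
\[
[y-M]^+ + [M-x]^+ = [y-x]^+,
\]
applied to a pair $(x,y)$ from $\{a,b,c\}$ with $z$ the third variable. This identity holds because $M$ equals one of $x,y,z$: if $M\in\{x,y\}$, one term vanishes trivially; if $M=z$ lies strictly between $x$ and $y$, the left-hand side telescopes to $y-x$; and if $z$ lies outside $[\min(x,y),\max(x,y)]$, both sides are zero. The six equations involving $\mu_2$ and $\mu_3$ reduce analogously to the companion identities
\[
[M-x]^+ + \min(a,b,c) = \min(y,z), \qquad [x-M]^+ + 1 - \max(a,b,c) = 1 - \max(y,z),
\]
for $\{x,y,z\}=\{a,b,c\}$; each again follows by a three-case split according to whether $x$ is the minimum, median, or maximum of $\{a,b,c\}$, in which the positive-part term captures exactly the gap between $\min(a,b,c)$ and $\min(y,z)$ (respectively $\max(a,b,c)$ and $\max(y,z)$).

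\textbf{Main obstacle.} The proof is conceptually straightforward but combinatorially heavy: twelve linear identities to verify, each a short case analysis. The main bookkeeping is tracking the correct demand shares in the child nodes---in particular the translation $z_{\spnode_1,\imiddle}=1-b$ that makes the formulas for $\mu^{\spnode_1}$ and $\mu^{\spnode_2}$ symmetric in $(a,b,c)$. Once this substitution is in place, every equation of \eqref{eq:linear-system} becomes an instance of one of the two schemas above, and the verification is mechanical.
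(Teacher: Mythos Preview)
Your proposal is correct and follows essentially the same route as the paper: both reduce the twelve equations of~\eqref{eq:linear-system} to the same three identities involving the positive part and the median of $\{z_p,z_r,1-z_q\}$ (the paper records these identities in~\eqref{prf:lem:lambdas:1} and applies them equation by equation, while you abstract them into two schemas), and both verify $\sum_i\lambda_i=1$ by the same pairing $(\lambda_2,\lambda_7),(\lambda_3,\lambda_6),(\lambda_1,\lambda_8)$. Your case analysis for the identity $[y-M]^+ + [M-x]^+ = [y-x]^+$ has a small slip---the clause ``if $z$ lies outside $[\min(x,y),\max(x,y)]$, both sides are zero'' is not correct as stated (that case is already absorbed by $M\in\{x,y\}$, where both sides equal $[y-x]^+$, not necessarily zero)---but this is a harmless redundancy rather than a gap, and the paper simply asserts these identities without proof.
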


Notice that, by definition, for every parallel component~$\spnode$, at most four out of the eight coefficients~$\lambda_i$ are non-zero.

\begin{proof}
	First, we observe that since $z_p, z_q, z_r \in [0, 1]$ we have $0 \leq \lambda_i \leq 1$ for $i=1, \dotsc, 8$ by definition.
	Additionally, we state some basic properties of the positive part and the second maximum. Let $a, b, c \in \R$ and $M = \smax(a, b, c)$. Then,
	\begin{align}
		[a]^+ + [-a]^+ &= |a|, \notag \\
		[a - M]^+ + [M - b]^+ &= [a - b]^+, \notag \\
		[M - a]^+ + \min\{a, b, c\} &= \min\{b, c\}, \notag \\
		[a - M]^+ + 1 - \max\{a, b, c\} &= 1 - \max\{b, c\}
		. \label{prf:lem:lambdas:1}
	\end{align}
	Using the first identity, it is easy to verify that
	\[
		(\lambda_1 + \lambda_8) + (\lambda_2 + \lambda_7) + (\lambda_3 + \lambda_6) = \max\{z_p, z_r, 1-z_q\} -  \min\{z_p, z_r, 1-z_q\}
	\]
	and, thus, $\sum_{i=1}^8 \lambda_i = 1$.

	With the assumption that $\fractionalDemands_\spnode = \{\ileft, \iright\}$ and $\fractionalDemands_{\spnode_1} \cap \fractionalDemands_{\spnode_1} = \{\imiddle\}$ with $\ileft \neq \imiddle \neq \iright$ as well as the assumptions that $\ileft[\spnode_1]=\ileft$, $\iright[\spnode_1]=\imiddle=\ileft[\spnode_2]$, and~$\iright[\spnode_2]=\iright$, we additionally observe that
	\begin{equation}
		z_p = z_{\spnode_1, \ileft[\spnode_1]}, \quad
		z_r = z_{\spnode_2, \ileft} = z_{\spnode_2, \ileft[\spnode_2]} \stackrel{\text{Obs.~\ref{obs:z-in-series-parallel}\ref{obs:z-in-parallel}}}{=} 1 - z_{\spnode_1, \iright[\spnode_1]}, \quad \text{and }
		z_q = z_{\spnode_2, \iright[\spnode_2]}
		.
		\label{prf:lem:lambdas:2}
	\end{equation}
	We can now verify that the numbers~$\lambda_i$ together with the values~$\mu^{\spnode}_j$ defined in~\eqref{eq:definition:mu} are a solution to the linear system given in~\eqref{eq:linear-system}.
	We compute
	\begin{align*}
		\lambda_1 + \lambda_2 &\stackrel{\mathclap{\eqref{prf:lem:lambdas:1}}}{=} [(1 - z_q) - z_p]^+ = [(1 - z_{\spnode, \iright}) - z_{\spnode, \ileft}]^+ = \mu^{\spnode}_1,
		\\
		\lambda_3 + \lambda_4 &\stackrel{\mathclap{\eqref{prf:lem:lambdas:1}}}{=} \min\{z_p, 1 - z_q\} = \min\{ z_{\spnode, \ileft}, 1- z_{\spnode, \iright}\}  = \mu^{\spnode}_2,
		\\
		\lambda_5 + \lambda_6 &\stackrel{\mathclap{\eqref{prf:lem:lambdas:1}}}{=} 1 - \max\{z_p, 1 - z_q\} = 1 - \max\{ z_{\spnode, \ileft}, 1- z_{\spnode, \iright}\}  = \mu^{\spnode}_3,
		\\
		\lambda_7 + \lambda_8 &\stackrel{\mathclap{\eqref{prf:lem:lambdas:1}}}{=} [z_p - (1 - z_q)]^+ = [z_{\spnode, \ileft} - (1 - z_{\spnode, \iright})]^+ = \mu^{\spnode}_4,
	\intertext{and}
		\lambda_2 + \lambda_6 &\stackrel{\mathclap{\eqref{prf:lem:lambdas:1}}}{=} [z_r - z_p]^+ = [z_{\spnode_2, \iright} - z_{\spnode, \ileft}]^+
		\stackrel{\mathclap{\eqref{prf:lem:lambdas:2}}}{=} [(1 - z_{\spnode_1, \iright[\spnode_1]}) - z_{\spnode, \ileft[\spnode_1]}]^+ = \mu^{\spnode_1}_1,
		\\
		\lambda_4 + \lambda_8 &\stackrel{\mathclap{\eqref{prf:lem:lambdas:1}}}{=} \min\{z_p, z_r\} = \min\{ z_{\spnode, \ileft}, z_{\spnode_2, \imiddle}\}
		\stackrel{\mathclap{\eqref{prf:lem:lambdas:2}}}{=} \min\{ z_{\spnode_1, \ileft[\spnode_1]}, 1- z_{\spnode_1, \iright[\spnode_1]}\} = \mu^{\spnode_1}_2,
		\\
		\lambda_1 + \lambda_5 &\stackrel{\mathclap{\eqref{prf:lem:lambdas:1}}}{=} 1 - \max\{z_p, z_r\} = 1 - \max \{ z_{\spnode, \ileft}, z_{\spnode_2, \imiddle}\}
		\stackrel{\mathclap{\eqref{prf:lem:lambdas:2}}}{=} 1 - \max\{ z_{\spnode_1, \ileft[\spnode_1]}, 1- z_{\spnode_1, \iright[\spnode_1]}\}  = \mu^{\spnode_1}_3,
		\\
		\lambda_3 + \lambda_7 &\stackrel{\mathclap{\eqref{prf:lem:lambdas:1}}}{=} [z_r - z_p]^+ = [z_{\spnode, \ileft} - z_{\spnode_2, \iright}]^+
		\stackrel{\mathclap{\eqref{prf:lem:lambdas:2}}}{=} [z_{\spnode, \ileft[\spnode_1]} - (1 - z_{\spnode_1, \iright[\spnode_1]})]^+ = \mu^{\spnode_1}_4.
	\end{align*}
	The last four equations can be checked in exactly the same way.
\end{proof}

For every possible combination~$i=1,\dotsc,8$ of flow groups given in \cref{tab:parallel:splitting}, we can use \cref{alg:linear:combination:refinement} with $\bar{\rho} = \lambda_i$ in order to refine the linear combinations of flows in the respective groups to a new linear combination with flows for the $p$-node~$\spnode$. In this way, we only combine flows that are consistent in the sense that every commodity is either send through~$\spnode_1$ or~$\spnode_2$. Therefore, all flows constructed this way are unsplittable and feasible.
Further, \cref{alg:linear:combination:refinement} ensures that the new flows have a total weight of~$\lambda_i$, which in turn by~\eqref{eq:linear-system} implies that every group~$\flowgroup^{\spnode}_j$ has a total weight of~$\mu^{\spnode}_j$. Therefore, by \cref{lem:mu:consistency}, all flows together form a convex decomposition of the given multiflow.
We illustrate this procedure in the following example.

\begin{example} \label{p-example}
We consider an example for the combination of the convex decompositions in a $p$-node~$\spnode$ with child nodes $\spnode_1$ and $\spnode_2$.
For this example, we assume that we are given the convex decompositions of the child nodes. In particular, we assume that the convex combinations for child~$\spnode_1$ is as follows.
\begin{align*}
{\vec{\mcflow}}^{\spnode_1} &=
\sum_{\ell = 1}^{\numflows_{\spnode_1}} \rho^{\spnode_1}_\ell \vec{Y}^{\spnode_1}_\ell
=
\colorbrace{color1}{
\tfrac{1}{6} \vec{Y}^{\spnode_1}_1
+
\tfrac{1}{4} \vec{Y}^{\spnode_1}_2
}{\substack{\text{group }\flowgroup^{\spnode_1}_1 \\ \mu^{\spnode_1}_1 = \frac{1}{6} + \frac{1}{4} = \frac{5}{12} }}
+
\colorbrace{color2}{
\tfrac{1}{16} \vec{Y}^{\spnode_1}_3 +
\tfrac{3}{16} \vec{Y}^{\spnode_1}_4
}{\substack{\text{group }\flowgroup^{\spnode_1}_2 \\ \mu^{\spnode_1}_2 = \frac{1}{16} + \frac{3}{16} = \frac{1}{4} }}
+
\colorbrace{color3}{
\tfrac{1}{15} \vec{Y}^{\spnode_1}_5 +
\tfrac{2}{15} \vec{Y}^{\spnode_1}_6 +
\tfrac{2}{15} \vec{Y}^{\spnode_1}_7
}{\substack{\text{group }\flowgroup^{\spnode_1}_3 \\ \mu^{\spnode_1}_3 = \frac{1}{15} + \frac{2}{15} + \frac{2}{15} = \frac{1}{3} }}
\end{align*}
The coefficients~$\mu^{\spnode_1}_j$ are consistent with the demand shares that we assume to be $z_p = z_p^{\spnode_1} = \frac{1}{4}$ and $z_q^{\spnode_1} = \frac{1}{3} = 1 - z_r$.
For child~$\spnode_2$, we assume that the convex combinations is as follows.
\begin{align*}
	{\vec{\mcflow}}^{\spnode_2} =
	\sum_{\ell = 1}^{\numflows_{\spnode_2}} \rho^{\spnode_2}_\ell \vec{Y}^{\spnode_2}_\ell
	=
	\colorbrace{color5}{
	\tfrac{3}{10} \vec{Y}^{\spnode_2}_1
	+
	\tfrac{1}{10} \vec{Y}^{\spnode_2}_2
	}{\substack{\text{group }\flowgroup^{\spnode_2}_2 \\ \mu^{\spnode_2}_2 = \frac{3}{10} + \frac{1}{10} = \frac{2}{5} }}
	+
	\colorbrace{color6}{
	\tfrac{1}{4} \vec{Y}^{\spnode_2}_3 +
	\tfrac{1}{12} \vec{Y}^{\spnode_2}_4
	}{\substack{\text{group }\flowgroup^{\spnode_2}_3 \\ \mu^{\spnode_2}_3 = \frac{1}{4} + \frac{1}{12} = \frac{1}{3} }}
	+
	\colorbrace{color7}{
	\tfrac{1}{5} \vec{Y}^{\spnode_2}_5 +
	\tfrac{1}{15} \vec{Y}^{\spnode_2}_6
	}{\substack{\text{group }\flowgroup^{\spnode_2}_4 \\ \mu^{\spnode_2}_4 = \frac{1}{5} + \frac{1}{15} = \frac{4}{15} }}
	\end{align*}
	The coefficients~$\mu^{\spnode_2}_j$ are consistent with the demand shares that we assume to be $z_p^{\spnode_2} = \frac{2}{3}=z_r $ and $z_q = z_q^{\spnode_2} = \frac{3}{5}$.
	Both convex decompositions are depicted in the upper part of \cref{fig:refinements:parallel}.

Now consider the parallel component represented by $\spnode$,  we have  $z_p = \frac{1}{4}$, $z_r = \frac{2}{3}$, and $z_q = \frac{3}{5}$. The second maximum is $M = \smax \big( \frac{1}{4}, \frac{2}{3}, 1 - \frac{3}{5}\big) = \frac{2}{5}$. Plugging this into the formulas from \cref{lem:lambdas}, we obtain
	\begin{align*}
		\lambda_1 = 0,\quad \lambda_2 = \tfrac{3}{20},\quad \lambda_3 = 0,\quad \lambda_4 = \tfrac{1}{4},\quad \lambda_5 = \tfrac{1}{3},\quad
		\lambda_6 = \tfrac{4}{15},\quad  \lambda_7 = \lambda_8 = 0
		.
	\end{align*}

We observe that there are four coefficients~$\lambda_i$ that are non-zero. These coefficients correspond to four possible combinations of routings in the parallel component.
\Cref{tab:parallel:splitting} indicates which flows have to be combined.

For the first non-zero coefficient~$\lambda_i$ with $i = 2$, the group~$\flowgroup^{\spnode}_1$ of the parent node consists of flows resulting from combining the flows in the groups~$\flowgroup^{\spnode_1}_1$ and~$\flowgroup^{\spnode_2}_2$.
We combine the flows and coefficients using \cref{alg:linear:combination:refinement} with $\bar{\rho}_1 = \lambda_2 = \frac{3}{20}$. The output of the algorithm is in this case:
\begin{align*}
	\tilde{\vec{\mcflow}}^{\spnode, 1} &=
	\sum_{\ell=1}^3 \rho^{\spnode}_\ell \vec{Y}^{\spnode}_\ell
	=
	\frac{3}{50} \begin{bmatrix} \vec{Y}^{\spnode_1}_1 \\ \vec{Y}^{\spnode_2}_1  \end{bmatrix} +
	\frac{21}{400} \begin{bmatrix} \vec{Y}^{\spnode_1}_2 \\ \vec{Y}^{\spnode_2}_1  \end{bmatrix} +
	\frac{3}{80} \begin{bmatrix} \vec{Y}^{\spnode_1}_2 \\ \vec{Y}^{\spnode_2}_2  \end{bmatrix}
\intertext{
For $i = 4$, the group~$\flowgroup^{\spnode}_2$ of the parent node consists of flows resulting from combining the flows in the groups~$\flowgroup^{\spnode_1}_2$ and~$\flowgroup^{\spnode_2}_2$.
By applying  \cref{alg:linear:combination:refinement} with $\bar{\rho}_2 = \lambda_4 = \frac{1}{4}$, we obtain the output below:
}
	\tilde{\vec{\mcflow}}^{\spnode, 2} &=
	\sum_{\ell=4}^6 \rho^{\spnode}_\ell \vec{Y}^{\spnode}_\ell
	=
	\frac{1}{16} \begin{bmatrix} \vec{Y}^{\spnode_1}_3 \\ \vec{Y}^{\spnode_2}_1  \end{bmatrix} +
	\frac{2}{16} \begin{bmatrix} \vec{Y}^{\spnode_1}_4 \\ \vec{Y}^{\spnode_2}_1  \end{bmatrix} +
	\frac{1}{16} \begin{bmatrix} \vec{Y}^{\spnode_1}_4 \\ \vec{Y}^{\spnode_2}_2  \end{bmatrix}
\intertext{
Similarly,  for $i = 5$, the group~$\flowgroup^{\spnode}_3$ of the parent node consists of flows resulting from combining the flows in the groups~$\flowgroup^{\spnode_1}_3$ and~$\flowgroup^{\spnode_2}_3$.  Using \cref{alg:linear:combination:refinement}
with $\bar{\rho}_3 = \lambda_5 = \frac{1}{3}$,  the output is as follows:
}
	\tilde{\vec{\mcflow}}^{\spnode, 3} &=
	\sum_{\ell=7}^{10} \rho^{\spnode}_\ell \vec{Y}^{\spnode}_\ell
	=
	\frac{1}{15} \begin{bmatrix} \vec{Y}^{\spnode_1}_5 \\ \vec{Y}^{\spnode_2}_3  \end{bmatrix} +
	\frac{2}{15} \begin{bmatrix} \vec{Y}^{\spnode_1}_6 \\ \vec{Y}^{\spnode_2}_3 \end{bmatrix} +
	\frac{1}{20}  \begin{bmatrix} \vec{Y}^{\spnode_1}_7 \\ \vec{Y}^{\spnode_2}_3 \end{bmatrix} +
	\frac{1}{12}\begin{bmatrix} \vec{Y}^{\spnode_1}_7 \\ \vec{Y}^{\spnode_2}_4  \end{bmatrix}
\intertext{
Finally,   for $i = 6$, the group~$\flowgroup^{\spnode}_4$ of the parent node consists of flows resulting from combining the flows in the groups~$\flowgroup^{\spnode_1}_1$ and~$\flowgroup^{\spnode_2}_4$. We apply \cref{alg:linear:combination:refinement}
with $\bar{\rho}_4 = \lambda_6 = \frac{4}{15}$, and the resulting output is:
}
	\tilde{\vec{\mcflow}}^{\spnode, 4} &=
	\sum_{\ell=11}^{13} \rho^{\spnode}_\ell \vec{Y}^{\spnode}_\ell
	=
\frac{8}{75} \begin{bmatrix} \vec{Y}^{\spnode_1}_1 \\ \vec{Y}^{\spnode_2}_5  \end{bmatrix} +
\frac{7}{75}\begin{bmatrix} \vec{Y}^{\spnode_1}_2 \\ \vec{Y}^{\spnode_2}_5  \end{bmatrix} +
\frac{1}{15} \begin{bmatrix} \vec{Y}^{\spnode_1}_2 \\ \vec{Y}^{\spnode_2}_6  \end{bmatrix}
\end{align*}

Overall, we obtain
\[
	\sum_{\ell=1}^{13} \rho^{\spnode}_\ell \vec{Y}^{\spnode}_\ell
	=
	\tilde{\vec{\mcflow}}^{\spnode, 1} + \tilde{\vec{\mcflow}}^{\spnode, 2} + \tilde{\vec{\mcflow}}^{\spnode, 3} + \tilde{\vec{\mcflow}}^{\spnode, 4}
	=
	\begin{bmatrix}
		\sum_{\ell = 1}^{\numflows_{\spnode_1}} \rho^{\spnode_1}_\ell \vec{Y}^{\spnode_1}_\ell \\
		\sum_{\ell = 1}^{\numflows_{\spnode_2}} \rho^{\spnode_2}_\ell \vec{Y}^{\spnode_2}_\ell
	\end{bmatrix}
	=
	\begin{bmatrix}
		\vec{\mcflow}^{\spnode_1} \\
		\vec{\mcflow}^{\spnode_2}
	\end{bmatrix}
	=
	\vec{\mcflow}^{\spnode}
\]

\end{example}

\subsection{Proof of Theorem~\ref{thm:2dmax}}

Combining \cref{lem:routing:2dmax,lem:recursive:convex} and \cref{thm:almost-unsplittable} completes the proof of \cref{thm:2dmax}.

\begin{proof}[Proof of \cref{thm:2dmax}]
	Let $\vec{\mcflow}$ be a given multiflow with total arc flows~$\vec{\totalflow} = (\totalflow_e)_{e \in E}$. Then, by \cref{thm:almost-unsplittable}, there is another, almost unsplittable flow~$\tilde{\vec{\mcflow}}$ that has the same total flow vector, i.e. $\tilde{\vec{\totalflow}} = \vec{\totalflow}$. \Cref{lem:recursive:convex} implies for $\spnode = \spnode_0$ that there are unsplittable multiflows~$\vec{Y}^{\spnode}_{\ell}, \ell=1,\dotsc,\numflows$ such that
	$\tilde{\vec{\mcflow}} = \sum_{\ell=1}^{\numflows} \rho_{\ell} \vec{Y}_{\ell}$.
	Since the total flow is linear in the multiflow, we also get
	\[
		\vec{\totalflow} = \tilde{\vec{\totalflow}} = \sum_{\ell=1}^{\numflows} \rho_{\ell} \vec{y}_{\ell},
	\]
	where $\vec{y}_{\ell}$ is the total flow of~$\vec{Y}_{\ell}$.
	By \cref{lem:recursive:convex}\emph{(ii)}, every flow $\vec{Y}_{\ell}$ respects some routing option $j \in \{1,2,3,4\}$ in all nodes~$\spnode \in V_T$ of the $sp$-tree. Therefore, \cref{lem:routing:2dmax}\emph{(ii)} implies the desired bound.
\end{proof}

The proof of \Cref{thm:2dmax} can be turned into an efficient algorithm that computes a family of unsplittable flows
together with convex coefficients. The algorithm goes through the $sp$-tree from top to bottom (i.e., from the root to the leafs), successively refining a common convex combination of the flows routed through the components corresponding to the considered tree nodes. Moreover, by regularly applying Carathéodory's Theorem (see, e.g.,~\cite[Chapter~7.7]{Schr86}) to intermediate convex combinations, one can keep the number of unsplittable routings bounded by~$O(k\cdot m)$, where~$m$ is the number of arcs of digraph~$G$. A similar argument is used in the work of Martens et al.~\cite{MartensSalazarSkut06}.

\subsection{Proof of Theorem~\ref{thm:main}}

Finally, we argue that the bounds from \cref{lem:routing:2dmax} can be strengthened by showing that for every component~$\spnode$ and $j \in \{1, \dotsc, 4\}$,
$
	|\componentflow[y]_{\spnode} - \componentflow[x]_{\spnode}| < d_{\max}
	,\text{ for~$j=1,2,3,4$,}
$
holds true, whenever $\mu_j^{\spnode} > 0$. This implies that the difference between the flow through a component for a given routing option~$j=1,2,3,4$ and the fractional flow is at most $d_{\max}$ whenever the corresponding coefficient in the convex combination is non-zero. 

\begin{lemma}
	\label{lem:dmax-bound}
	Let $\vec{\mcflow}$ be an almost unsplittable flow and $\vec{Y}$ an unsplittable flow.
	\begin{enumerate}[(i)]
		\item If $\vec{Y}$ respects the routing option $j \in \{1,2,3,4\}$ in some $\spnode \in V_T$ and $\mu^\spnode_j > 0$ then
		\begin{equation*}
            |\componentflow[y]_{\spnode} - \componentflow[x]_{\spnode}| < d_{\max}
		.
		\end{equation*}
		\item If, for every (leaf) node~$\spnode \in V_T$, $\vec{Y}$ respects some routing option $j_\spnode \in \{1,2,3,4\}$ such that $\mu^{\spnode}_j > 0$, then
		\[
			\totalflow_e - d_{\max} <  y_e < \totalflow_e + d_{\max}
		\]
		for all $e \in E$.
	\end{enumerate}
\end{lemma}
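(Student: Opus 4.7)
The plan is a direct four-way case analysis on the routing option $j \in \{1,2,3,4\}$, expressing the difference $\componentflow[y]_\spnode - \componentflow[x]_\spnode$ as an explicit linear combination of $d_\ileft$ and $d_\iright$, and then reading off the strict bound from the specific hypothesis $\mu^\spnode_j > 0$. Let $z_p \coloneqq z_{\spnode,\ileft}(\vec{\mcflow})$ and $z_q \coloneqq z_{\spnode,\iright}(\vec{\mcflow})$. Since $\vec{\mcflow}$ is almost unsplittable at $\spnode$ (with $\fractionalDemands_\spnode = \{\ileft,\iright\}$), every commodity $i \in \unsplittableDemands_\spnode$ contributes $d_i$ to both $\componentflow[x]_\spnode$ and $\componentflow[y]_\spnode$ and hence cancels, while commodities outside $\unsplittableDemands_\spnode \cup \{\ileft,\iright\}$ contribute $0$ to both. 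This leaves
\[
\componentflow[y]_\spnode - \componentflow[x]_\spnode = \sum_{i \in J^\spnode_j} d_i - z_p d_\ileft - z_q d_\iright,
\]
which becomes $-(z_p d_\ileft + z_q d_\iright)$, $(1-z_p)d_\ileft - z_q d_\iright$, $-z_p d_\ileft + (1-z_q)d_\iright$, or $(1-z_p)d_\ileft + (1-z_q)d_\iright$ for $j=1,2,3,4$ respectively.

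To prove (i), I would treat the four cases as follows. For $j=1$, the assumption $\mu^\spnode_1 = [(1-z_q) - z_p]^+ > 0$ forces $z_p + z_q < 1$, hence $|z_p d_\ileft + z_q d_\iright| \leq (z_p + z_q)d_{\max} < d_{\max}$. Symmetrically, for $j=4$, $\mu^\spnode_4 > 0$ gives $(1-z_p) + (1-z_q) < 1$, from which $|(1-z_p)d_\ileft + (1-z_q)d_\iright| < d_{\max}$. For $j=2$, since $\ileft,\iright \in \fractionalDemands_\spnode$ we have $z_p \in (0,1)$ and $z_q \in (0,1)$; thus $0 \leq (1-z_p)d_\ileft < d_{\max}$ and $0 \leq z_q d_\iright < d_{\max}$, so their difference lies in the open interval $(-d_{\max},d_{\max})$. (Here I use that the assumption $\mu^\spnode_2 = \min\{z_p,1-z_q\} > 0$ is automatic from $\fractionalDemands_\spnode = \{\ileft,\iright\}$, so no extra hypothesis is needed.) The case $j=3$ is symmetric to $j=2$. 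Part (ii) is then immediate by applying (i) to the leaf node $\spnode$ representing $e$, at which $\componentflow[x]_\spnode = \totalflow_e$ and $\componentflow[y]_\spnode = y_e$.

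I expect no substantial obstacle: the content of the lemma is essentially that the coefficients $\mu^\spnode_j$ defined in~\eqref{eq:definition:mu} are precisely nonzero on those routing options for which the corresponding discrepancy is strictly smaller than $d_{\max}$, and this is exactly what the positive-part expressions encode. The only bookkeeping concern is handling the convention $|\fractionalDemands_\spnode|<2$ introduced via dummy commodities in \cref{app:special-cases}; in that case $z_\ileft$ or $z_\iright$ is either $0$ or $1$ for a dummy (with $d_{\text{dummy}}$ set appropriately), and the same case analysis goes through unchanged since the corresponding terms vanish or are absorbed into the unsplittable part.
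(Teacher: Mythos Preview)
Your proposal is correct and follows essentially the same approach as the paper: a four-way case analysis on $j$, using the explicit positive-part formulas for $\mu^\spnode_j$ to extract $z_p+z_q<1$ (for $j=1$) and $(1-z_p)+(1-z_q)<1$ (for $j=4$), and relying only on $z_p,z_q\in(0,1)$ for $j=2,3$. The paper phrases the $j=2,3$ bound as $|a-b|\le\max\{a,b\}<d_{\max}$ for nonnegative $a,b$, which is equivalent to your interval argument; your additional observation that $\mu^\spnode_2,\mu^\spnode_3>0$ is automatic in the non-dummy case is correct but not needed for the proof.
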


\begin{proof}
	We compute
	$
		|\componentflow[y]_{\spnode} - \componentflow[x]_{\spnode}| =  \big| \sum_{i \in J^{\spnode}_j} d_i - \sum_{i \in \fractionalDemands_{\spnode}} z_{\spnode, i} d_i \big|
	$.
	Assume that $\vec{Y}$ respects some routing option~$j \in \{1,2,3,4\}$ for some $\spnode \in V_T$.\\
	For~$j=1$, we observe that $\mu^{\spnode}_1 > 0$ holds only if $z_{\spnode, \ileft} < 1 - z_{\spnode, \iright}$, which yields
		\begin{align*}
			|\componentflow[y]_{\spnode} - \componentflow[x]_{\spnode}| &= \bigl| - z_{\spnode, \ileft} d_{\ileft} - z_{\spnode, \iright} d_{\iright} \bigr| =  z_{\spnode, \ileft} d_{\ileft} + z_{\spnode, \iright} d_{\iright}
			\leq  ( \underbrace{z_{\spnode, \ileft}}_{\mathclap{< 1 - z_{\spnode, \iright}}}+ z_{\spnode, \iright}) d_{\max} < d_{\max}
			.
		\intertext{For~$j = 2$, observe that}
			|\componentflow[y]_{\spnode} - \componentflow[x]_{\spnode}| &=
			\bigl\vert
			(1 - z_{\spnode, \ileft}) d_{\ileft} -  z_{\spnode, \iright} d_{\iright}
			\bigr\vert \leq \max \{(\underbrace{1 - z_{\spnode, \ileft}}_{<1}) d_{\ileft} , \underbrace{ z_{\spnode, \iright}}_{<1} d_{\iright}  \} < d_{\max},
		\intertext{and, analogously, for $j=3$, we obtain}
			|\componentflow[y]_{\spnode} - \componentflow[x]_{\spnode}| &=
			\bigl\vert
			z_{\spnode, \ileft} d_{\ileft} -  (1 - z_{\spnode, \iright}) d_{\iright}
			\bigr\vert \leq \max \{\underbrace{z_{\spnode, \ileft}}_{<1} d_{\ileft} , (\underbrace{ 1 - z_{\spnode, \iright}}_{<1} ) d_{\iright}  \} < d_{\max}
			.
		\intertext{
		Finally, for $j=4$, we observe that $\mu^{\spnode}_4 > 0$ only if $z_{\spnode, \ileft} > 1 - z_{\spnode, \iright}$, such that
		}
			|\componentflow[y]_{\spnode} - \componentflow[x]_{\spnode}| &= \bigl| (1 - z_{\spnode, \ileft}) d_{\ileft} + (1 - z_{\spnode, \iright}) d_{\iright} \bigr|  = ( 1 - z_{\spnode, \ileft}) d_{\ileft} + (1 - z_{\spnode, \iright}) d_{\iright} \\
			&\leq (1 - z_{\spnode, \ileft} + \underbrace{1 - z_{\spnode, \iright}}_{< z_{\spnode, \ileft}  }) d_{\max} < d_{\max}.
		\end{align*}
		This proves \emph{(i)}.
		For \emph{(ii)} we use that $\componentflow_\spnode = \totalflow_e$ for every leaf node~$\spnode$ that represents an arc~$e$.
	\end{proof}

Finally, this allows us to proof our main result.

\begin{proof}[Proof of \cref{thm:main}]
	We can follow the exact same steps as in the proof of \cref{thm:2dmax}. However, we can use \cref{lem:dmax-bound} instead of \cref{lem:routing:2dmax} to obtain the tighter bound. Notice, that any unsplittable flow respecting a routing option $j \in \{1,2,3,4\}$ in leaf node~$\spnode$ will only contribute to the convex decomposition if also $\mu^{\spnode}_j > 0$. If this is not the case, it may be removed without changing the convex decomposition.
\end{proof}

\section{Special cases for the convex combination}\label{app:special-cases}

In this section we discuss the special cases that may occur when defining the convex decomposition with unsplittable flows.
In \cref{sec:convex:combination} we assumed that $|\fractionalDemands_\spnode| = 2$ for every~$\spnode \in V_T$ and $|\fractionalDemands_{\spnode_1} \cap \fractionalDemands_{\spnode_2}| = 1$ for the two child nodes $\spnode_1$ and $\spnode_2$ of a $p$-node~$\spnode$.
Additionally, we assumed that for every $p$-node with child nodes $\spnode_1, \spnode_2$, all commodities $\ileft, \iright \in \fractionalDemands_{\spnode}$ and $\imiddle \in \fractionalDemands_{\spnode_1} \cap \fractionalDemands_{\spnode_2}$ are distinct.
We will now discuss how to proceed if one or more of these assumptions are not true.

\subsection{Dummy commodities}
If, for some $sp$-node~$\spnode \in V_T$, we have less than two fractionally routed commodities, we use artificial dummy commodities in order to be consistent with all prior definitions.
In this case, we set $\ileft\coloneqq0$ or $\iright\coloneqq0'$, where we interpret~$0$ and~$0'$ as  dummy commodities that are routed through no component of the series-parallel digraph; that is, they satisfy $\mcflow_{0,e}=\mcflow_{0',e}\coloneqq0$ for all~$e \in E$, and thus $z_{\spnode,0}=z_{\spnode,0'}=0$ for all~$\spnode\in V_T$.
Notice that, by this definition, we have $\ileft, \iright \in \fractionalDemands_{\spnode}\cup\{0,0'\}$, and, thus, $z_{\spnode, \ileft}, z_{\spnode, \iright} < 1$.
Using this convention, we may assume that for every~$\spnode \in V_T$ we have two distinct commodities~$\ileft, \iright$ and the four possible routing options $J^{\spnode}_1 = \emptyset, J^{\spnode}_2 = \{\ileft\}, J^{\spnode}_3 = \{\iright\},$ and $J^{\spnode}_4 = \{\ileft, \iright\}$ with weights $\mu^\spnode_j$ as defined in \cref{lem:mu-values}.
With $z_{\spnode,0}=z_{\spnode,0'}=0$ we immediately observe that for $j=1,2,3,4$
\[
	\mu^{\spnode}_j = 0
	\quad \text{whenever} \quad
	0 \in J^{\spnode}_j \text{ or } 0' \in J^{\spnode}_j
	.
\]
This means, whenever a routing option contains a dummy commodity, the corresponding coefficient in the convex combination is zero and, therefore, dummy commodities never occur (with non-zero weight) in the constructed convex combinations.

\subsection{Special cases in parallel components}
Let $\spnode \in V_T$ be a $p$-node with children~$\spnode_1$ and~$\spnode_2$.
By the considerations above, we may assume that~$\ileft$ and $\iright$ are two distinct commodities. However, our construction in \cref{sec:convex:combination} also assumes that there exists a third, distinct commodity $\imiddle \in \fractionalDemands_{\spnode_1} \cap \fractionalDemands_{\spnode_2}$.
This leads to two additional special cases: it may be the case that $\imiddle$ is not distinct from $\ileft$ and $\iright$, and it may also be the case that $\imiddle$ does not exist at all (i.e., $ \fractionalDemands_{\spnode_1} \cap \fractionalDemands_{\spnode_2} = \emptyset$).

\paragraph{Case \texorpdfstring{$\ileft, \iright, \imiddle$}{p, q, r} are not distinct.}

First, we discuss the case where $\imiddle \in \fractionalDemands_{\spnode_1} \cap \fractionalDemands_{\spnode_2}$ exist, but is not distinct from $\ileft, \iright$.
In particular, we consider the case that $\imiddle = \ileft$. (The other case $\imiddle = \iright$ is symmetric.)
In this case, the routing options in \cref{tab:parallel:splitting} are not valid anymore. In contrast, there are now six possible routing options given in \cref{tab:parallel:splitting:case:p=r}.
\begin{table}[t]
	\centering
	\renewcommand{\arraystretch}{1.35}
	\begin{tabular}{l>{\color{gray}}c>{\color{gray}}c|l>{\color{gray}}c>{\color{gray}}c|l>{\color{gray}}c>{\color{gray}}c|c}
		\multicolumn{3}{c}{parent~$\spnode$} & \multicolumn{3}{c}{child~$\spnode_1$} & \multicolumn{3}{c}{child~$\spnode_2$} &
		\\
		\multicolumn{1}{c}{routing} & $\mu$ & group &
		\multicolumn{1}{c}{routing} & $\mu$ & group &
		\multicolumn{1}{c}{routing} & $\mu$ & group &
		\\
		\hline
		$J^{\spnode}_1 = \emptyset$ &
		$\mu^{\spnode}_1$ &
		$K^{\spnode}_1$ &
		$J^{\spnode_1}_1 = \emptyset$ & $\mu^{\spnode_1}_1$ & $K^{\spnode_1}_1$ &
		$J^{\spnode_2}_1 = \emptyset$ & $\mu^{\spnode_2}_1$ & $K^{\spnode_2}_1$ &
		$\lambda_1$
		\\
		\hline
		\multirow{2}{*}{$J^{\spnode}_2 = \{\ileft\}$} &
		\multirow{2}{*}{$\mu^{\spnode}_2$} &
		\multirow{2}{*}{$K^{\spnode}_2$} &
		$J^{\spnode_1}_2 = \{\ileft\}$ & $\mu^{\spnode_1}_2$ & $K^{\spnode_1}_2$ &
		$J^{\spnode_2}_1 = \emptyset$ & $\mu^{\spnode_2}_1$ & $K^{\spnode_2}_1$ &
		$\lambda_3$
		\\
		&&&
		$J^{\spnode_1}_1 = \emptyset$ & $\mu^{\spnode_1}_1$ & $K^{\spnode_1}_1$ &
		$J^{\spnode_2}_2 = \{\ileft\}$ & $\mu^{\spnode_2}_2$ & $K^{\spnode_2}_2$ &
		$\lambda_3$
		\\
		\hline
		$J^{\spnode}_3 = \{\iright\}$ &
		$\mu^{\spnode}_3$ &
		$K^{\spnode}_3$ &
		$J^{\spnode_1}_1 = \emptyset$ & $\mu^{\spnode_1}_1$ & $K^{\spnode_1}_1$ &
		$J^{\spnode_2}_3 = \{\iright\}$ & $\mu^{\spnode_2}_3$ & $K^{\spnode_2}_3$ &
		$\lambda_4$
		\\
		\hline
		\multirow{2}{*}{$J^{\spnode}_4 = \{\ileft, \iright\}$} &
		\multirow{2}{*}{$\mu^{\spnode}_4$} &
		\multirow{2}{*}{$K^{\spnode}_4$} &
		$J^{\spnode_1}_2 = \{\ileft\}$ & $\mu^{\spnode_1}_2$ & $K^{\spnode_1}_2$ &
		$J^{\spnode_2}_3 = \{\iright\}$ & $\mu^{\spnode_2}_3$ & $K^{\spnode_2}_3$ &
		$\lambda_5$
		\\
		&&&
		$J^{\spnode_1}_1 = \emptyset$ & $\mu^{\spnode_1}_1$ & $K^{\spnode_1}_1$ &
		$J^{\spnode_2}_4 = \{\imiddle, \iright\}$ & $\mu^{\spnode_2}_4$ & $K^{\spnode_2}_4$ &
		$\lambda_6$
	\end{tabular}%
	\renewcommand{\arraystretch}{1}
	\caption{Splitting of the commodities in a $p$-node~$\spnode$ with child nodes~$\spnode_1$ and~$\spnode_2$ with the respective sets $J$ and coefficients~$\mu$ in the case $\ileft = \imiddle$.}
	\label{tab:parallel:splitting:case:p=r}
\end{table}%
From this table we obtain the following new linear system replacing the system~\eqref{eq:linear-system}.
\begin{align}
	\mu^{\spnode}_1 &= \lambda_1
	&
	\mu^{\spnode_1}_1 &= \lambda_1 + \lambda_3 + \lambda_4 + \lambda_6
	&
	\mu^{\spnode_2}_1 &= \lambda_1 + \lambda_2
	\notag
	\\
	\mu^{\spnode}_2 &= \lambda_2 + \lambda_3
	&
	\mu^{\spnode_1}_2 &= \lambda_2 + \lambda_5
	&
	\mu^{\spnode_2}_2 &= \lambda_3
	\notag
	\\
	\mu^{\spnode}_3 &= \lambda_4
	&
	\mu^{\spnode_1}_3 &= 0
	&
	\mu^{\spnode_2}_3 &= \lambda_4 + \lambda_5
	\notag
	\\
	\mu^{\spnode}_4 &= \lambda_5 + \lambda_6
	&
	\mu^{\spnode_1}_4 &= 0
	&
	\mu^{\spnode_2}_4 &= \lambda_6
	\label{eq:linear-system:p=r}
\end{align}
Notice that, since $\ileft = \imiddle$, there is only one fractionally routed commodity in the subcomponent~$\spnode_1$. Therefore, within this component we would introduce a dummy commodity~$\iright[\spnode_2] = 0'$, which implies $\mu^{\spnode_1}_3 = \mu^{\spnode_1}_4 = 0$. Also notice that, while in the subcomponent introducing a dummy commodity helps, we can not use the routing options from \cref{tab:parallel:splitting} with a dummy commodity, since the routing options in \cref{tab:parallel:splitting:case:p=r} are fundamentally different.
With the next lemma, we show that there also exist a solution to the system~\eqref{eq:linear-system:p=r}.

\begin{lemma}
	Let $\spnode \in V_T$ be a $p$-node and assume that $\imiddle \in \fractionalDemands_{\spnode_1} \cap \fractionalDemands_{\spnode_2} \neq \emptyset$ as well as $\ileft = \iright$.
	Denote by $z_p \coloneqq z_{\spnode, \ileft}, z_q \coloneqq z_{\spnode, \iright}$ and $z_r \coloneqq z_{\spnode_2, \ileft}$ and let $M \coloneqq \smax(z_p, z_r, 1 - z_q)$ be the second maximum of these numbers. Then the numbers
	\begin{align*}
		\lambda_1 &= [(1-z_q) - z_p]^+,
		&
		\lambda_2 &= M - z_r
		&
		\lambda_3 &= \min\{ z_r, 1 - z_q \},
		\\
		\lambda_4 &= 1 - \max\{ z_p, 1 - z_q \}
		&
		\lambda_5 &=  z_p - M
		&
		\lambda_6 &= [z_r - (1-z_q)]^+
	\end{align*}
	satisfy $\lambda_i \in [0, 1], i=1,\dotsc,6$ and sum up to~$1$. Together with the numbers defined in~\eqref{eq:definition:mu} they are a solution to the system~\eqref{eq:linear-system:p=r}.
\end{lemma}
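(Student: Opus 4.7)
The plan is to follow the template of the proof of \cref{lem:lambdas}, with six routing options instead of eight. First I will exploit $\ileft=\imiddle$ to reduce everything to $z_p,z_q,z_r$: by the parallel-composition identity \cref{obs:z-in-series-parallel}\ref{obs:z-in-parallel}, $z_p=z_{\spnode_1,\ileft}+z_r$, and since $\ileft\in\fractionalDemands_{\spnode_1}$ forces $z_{\spnode_1,\ileft}>0$, I obtain the key strict inequality $z_r<z_p$. Likewise, since $\iright\notin\fractionalDemands_{\spnode_1}$ but $\iright\in\fractionalDemands_{\spnode}$, it must hold that $z_{\spnode_1,\iright}=0$ and hence $z_q=z_{\spnode_2,\iright}$. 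Plugging these into \eqref{eq:definition:mu} (with the dummy convention $\iright[\spnode_1]=0'$) yields $\mu^{\spnode_1}_3=\mu^{\spnode_1}_4=0$, $\mu^{\spnode_1}_1=1-(z_p-z_r)$, $\mu^{\spnode_1}_2=z_p-z_r$, while each $\mu^{\spnode_2}_j$ is simply what \eqref{eq:definition:mu} gives after substituting $(z_r,z_q)$ for $(z_{\spnode,\ileft},z_{\spnode,\iright})$.

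Next I will check that $\lambda_i\in[0,1]$ for all $i$. For $\lambda_1,\lambda_3,\lambda_4,\lambda_6$ this is immediate. The interesting cases are $\lambda_2=M-z_r$ and $\lambda_5=z_p-M$: since $z_r<z_p$, the number $z_r$ is never the unique maximum of $\{z_p,z_r,1-z_q\}$, so $z_r\leq M$; and $z_p$ is always at least the second largest, so $z_p\geq M$. Both $\lambda_2\geq0$ and $\lambda_5\geq0$ follow, and the upper bounds are trivial. For $\sum_i\lambda_i=1$, I will group the terms as $\lambda_2+\lambda_5=z_p-z_r$ (direct cancellation of $M$), $\lambda_3+\lambda_6=z_r$, and $\lambda_1+\lambda_4=1-z_p$, the last two being immediate consequences of the positive-part identities in \eqref{eq:properties:positive:part}.

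Finally I will verify the twelve equations in \eqref{eq:linear-system:p=r}. Four of them ($\mu^{\spnode}_1=\lambda_1$, $\mu^{\spnode}_3=\lambda_4$, $\mu^{\spnode_2}_2=\lambda_3$, $\mu^{\spnode_2}_4=\lambda_6$) hold by direct inspection, and $\mu^{\spnode_1}_3=\mu^{\spnode_1}_4=0$ is already in place. The two remaining $\mu^{\spnode_1}$-equations come for free: $\mu^{\spnode_1}_2=\lambda_2+\lambda_5=z_p-z_r$, and using $\sum_i\lambda_i=1$, $\mu^{\spnode_1}_1=\lambda_1+\lambda_3+\lambda_4+\lambda_6=1-(z_p-z_r)$. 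The four remaining equations ($\mu^{\spnode}_2$, $\mu^{\spnode}_4$, $\mu^{\spnode_2}_1$, $\mu^{\spnode_2}_3$) are handled by once more applying \eqref{eq:properties:positive:part}, now to the pair $(z_p,1-z_q)$ or $(z_r,1-z_q)$, after noting that $M$ collapses to the maximum of the two remaining arguments. The main obstacle I anticipate is the bookkeeping in this last block: depending on the ordering of $z_p,z_r,1-z_q$, the second maximum $M$ can coincide with any one of them, so a naive case split would enumerate several subcases per equation. I expect to avoid this by systematically applying the identities $[a-b]^++b=\max\{a,b\}$ and $\min\{a,b\}+[a-b]^+=a$ to pairs involving $M$ at the right moment, keeping the argument as compact as that of \cref{lem:lambdas}.
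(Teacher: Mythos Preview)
Your proposal is correct and follows essentially the same approach as the paper's proof: both start from $z_r\le z_p$ via \cref{obs:z-in-series-parallel}\ref{obs:z-in-parallel} to secure $\lambda_2,\lambda_5\ge 0$, group $\sum_i\lambda_i$ as $(\lambda_1+\lambda_4)+(\lambda_2+\lambda_5)+(\lambda_3+\lambda_6)$, read off the four single-$\lambda$ equations directly, and recover the remaining ones using that the $\mu$'s in each component sum to~$1$. The one small difference is that the paper verifies $\lambda_2+\lambda_3=\mu^{\spnode}_2$ and $\lambda_4+\lambda_5=\mu^{\spnode_2}_3$ by an explicit three-case split on the ordering of $z_p,z_r,1-z_q$, whereas you plan to avoid cases via the positive-part identities; this works once you note that $z_r\le z_p$ forces $\lambda_3=\min\{z_p,z_r,1-z_q\}$, $\lambda_4=1-\max\{z_p,z_r,1-z_q\}$, $\lambda_1=[(1-z_q)-M]^+$, and $\lambda_6=[M-(1-z_q)]^+$, after which the identities in~\eqref{prf:lem:lambdas:1} apply verbatim.
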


\begin{proof}
	By \cref{obs:z-in-series-parallel}\ref{obs:z-in-parallel}, we have
	\begin{equation}
		z_r = z_{\spnode_2, \ileft} = z_{\spnode, \ileft} - z_{\spnode_1, \ileft} \leq z_{\spnode, \ileft} = z_p
	\end{equation}
	and, thus, $z_r \leq \max_2(z_p, z_r, 1-z_q) \leq z_p$. This ensures that $\lambda_2, \lambda_5 \geq 0$ and, therefore, $\lambda_i \geq 0, i=1,\dotsc,6$. There are also only three possible orderings of the numbers $z_p, z_r$, and $1-z_q$.
	If $1-z_q \geq z_p \geq z_r$, we compute
	\begin{align*}
		\lambda_2 + \lambda_3 &= z_p - z_r + z_r = z_p = \min\{z_p, 1-z_q\} = \mu^{\spnode}_2
		,\\
		\lambda_4 + \lambda_5 &= 1 - (1 - z_q) + 0 = 1 - \max\{z_r, 1-z_q\} = \mu^{\spnode_2}_3
		;
	\intertext{if $z_p > 1-z_q \geq z_r$, we compute}
		\lambda_2 + \lambda_3 &= (1-z_q) - z_r + z_r = (1-z_q) = \min\{z_p, 1-z_q\}  = \mu^{\spnode}_2
		,\\
		\lambda_4 + \lambda_5 &= 1 - z_p + z_p - (1 - z_q) = 1 - \max\{z_r, 1-z_q\} = \mu^{\spnode_2}_3
		;
	\intertext{and if $z_p \geq z_r > 1-z_q$, we compute}
		\lambda_2 + \lambda_3 &= 0 + (1 - z_q) = \min\{z_p, 1-z_q\} = \mu^{\spnode}_2
		,\\
		\lambda_4 + \lambda_5 &= 1 - z_p + z_p - z_r  = 1 - \max\{z_r, 1-z_q\} = \mu^{\spnode_2}_3
		.
	\end{align*}
	Overall, we have shown $\lambda_2 + \lambda_3 = \mu^{\spnode}_2$ and $\lambda_4 + \lambda_5 = \mu^{\spnode_2}_3$. In addition, we obtain
	\[
		\lambda_2 + \lambda_5 = z_p - z_r = z_{\spnode, \ileft} - z_{\spnode_2, \ileft} = z_{\spnode_1, \ileft} = \min \{z_{\spnode_1, \ileft}, 1 - \underbrace{z_{\spnode_1, 0'}}_{=0} \} = \mu^{\spnode_1}_2
		.
	\]

	Using the identities for the positive part from~\eqref{eq:properties:positive:part}
	we obtain
	\begin{align*}
		\sum_{i=1}^6 \lambda_i &= (\lambda_1 + \lambda_4) + (\lambda_2 + \lambda_5) + (\lambda_3 + \lambda_6)
		= (1 - z_p) + (z_p - r_r) + z_r = 1
		.
	\end{align*}

	The equations $\mu^{\spnode}_1 = \lambda_1, \mu^{\spnode_2}_2 = \lambda_3$, $\mu^{\spnode}_3 = \lambda_4$ and $\mu^{\spnode_2}_4 = \lambda_6$ as well as $\mu^{\spnode_1}_3 = 0$ and $\mu^{\spnode_1}_4 = 0$ are satisfied by the definition of the values $\mu^{\spnode}_j$ in~\eqref{eq:definition:mu}.
	The remaining three equalities of~\eqref{eq:linear-system:p=r} follow since the respective coefficients sum up to~$1$. In particular, we obtain
	\begin{align*}
		\lambda_5 + \lambda_6 &= 1 - \lambda_1 - (\lambda_2 + \lambda_3) - \lambda_4 = 1 - \mu^{\spnode}_1 - \mu^{\spnode}_2 - \mu^{\spnode}_3 = \mu^{\spnode}_4, \\
		\lambda_1 + \lambda_3 + \lambda_4 + \lambda_6 &= 1 - (\lambda_2 + \lambda_5) = 1 - \mu^{\spnode_1}_2 - \mu^{\spnode_1}_3 - \mu^{\spnode_1}_4 = \mu^{\spnode_1}_1, \\
		\lambda_1 + \lambda_2 &= 1 - \lambda_3 - (\lambda_4 + \lambda_5) - \lambda_6 = 1 - \mu^{\spnode_2}_2 - \mu^{\spnode_2}_3 - \mu^{\spnode_2}_4 = \mu^{\spnode_2}_1
	\end{align*}
	which concludes the proof.
\end{proof}

\begin{table}[t]
	\centering
	\renewcommand{\arraystretch}{1.35}
	\begin{tabular}{l>{\color{gray}}c>{\color{gray}}c|l>{\color{gray}}c>{\color{gray}}c|l>{\color{gray}}c>{\color{gray}}c|c}
		\multicolumn{3}{c}{parent~$\spnode$} & \multicolumn{3}{c}{child~$\spnode_1$} & \multicolumn{3}{c}{child~$\spnode_2$} &
		\\
		\multicolumn{1}{c}{routing} & $\mu$ & group &
		\multicolumn{1}{c}{routing} & $\mu$ & group &
		\multicolumn{1}{c}{routing} & $\mu$ & group &
		\\
		\hline
		$J^{\spnode}_1 = \emptyset$ &
		$\mu^{\spnode}_1$ &
		$K^{\spnode}_1$ &
		$J^{\spnode_1}_1 = \emptyset$ & $\mu^{\spnode_1}_1$ & $K^{\spnode_1}_1$ &
		$J^{\spnode_2}_1 = \emptyset$ & $\mu^{\spnode_2}_1$ & $K^{\spnode_2}_1$ &
		$\lambda_1$
		\\
		\hline
		$J^{\spnode}_2 = \{\ileft\}$ &
		$\mu^{\spnode}_2$ &
		$K^{\spnode}_2$ &
		$J^{\spnode_1}_2 = \{\ileft\}$ & $\mu^{\spnode_1}_2$ & $K^{\spnode_1}_2$ &
		$J^{\spnode_2}_1 = \emptyset$ & $\mu^{\spnode_2}_1$ & $K^{\spnode_2}_1$ &
		$\lambda_2$
		\\
		\hline
		$J^{\spnode}_3 = \{\iright\}$ &
		$\mu^{\spnode}_3$ &
		$K^{\spnode}_3$ &
		$J^{\spnode_1}_1 = \emptyset$ & $\mu^{\spnode_1}_1$ & $K^{\spnode_1}_1$ &
		$J^{\spnode_2}_3 = \{\iright\}$ & $\mu^{\spnode_2}_3$ & $K^{\spnode_2}_3$ &
		$\lambda_3$
		\\
		\hline
		$J^{\spnode}_4 = \{\ileft, \iright\}$ &
		$\mu^{\spnode}_4$ &
		$K^{\spnode}_4$ &
		$J^{\spnode_1}_2 = \{\ileft\}$ & $\mu^{\spnode_1}_2$ & $K^{\spnode_1}_2$ &
		$J^{\spnode_2}_3 = \{\iright\}$ & $\mu^{\spnode_2}_3$ & $K^{\spnode_2}_3$ &
		$\lambda_4$
	\end{tabular}%
	\caption{Splitting of the commodities in a $p$-node~$\spnode$ with child nodes~$\spnode_1$ and~$\spnode_2$ with the respective sets $J$ and coefficients~$\mu$ in the case of $\fractionalDemands_{\spnode_1} \cap \fractionalDemands_{\spnode_2} = \emptyset$.}
	\label{tab:parallel:splitting:case:no-r}
\end{table}
\paragraph{Case \texorpdfstring{$\fractionalDemands_{\spnode_1} \cap \fractionalDemands_{\spnode_2} = \emptyset$}{I\_w1 ∩ I\_w2 = ∅}.}
Finally, we consider the case where $\fractionalDemands_{\spnode_1} \cap \fractionalDemands_{\spnode_2} = \emptyset$, i.e. there is no commodity that is split between~$\spnode_1$ and $\spnode_2$. Then, we are just left with the four possible routing options $J^{\spnode}_j, j=1, \dotsc 4$. In particular, there is a one-to-one correspondence between the routing options in the parent node~$\spnode$ and the child nodes depicted in \cref{tab:parallel:splitting:case:no-r}.
We only need to check, that the respective $\mu^{\spnode}_j$-values are consistent.
Since there is no commodity, that is split between the child components, there is also only (at most) one fractionally routed commodity in every child. We therefore use dummy commodities in the child components as follows:
\[
	\ileft[\spnode_1] = \ileft,
	\iright[\spnode_1] = 0'
	\quad\text{and}\quad
	\ileft[\spnode_2] = 0,
	\iright[\spnode_2] = \iright
	.
\]

Using the identities for the positive part from~\eqref{eq:properties:positive:part}, we observe that
\begin{align*}
	\mu^{\spnode}_1 + \mu^{\spnode}_3 &=
	1 - z_{\spnode, \ileft} = [(1 - 0) - z_{\spnode, \ileft}]^+
	= [(1- z_{\spnode, 0'}) - z_{\spnode_1, \ileft[\spnode_1]}]^+ = \mu^{\spnode_1}_1, \\
	\mu^{\spnode}_2 + \mu^{\spnode}_4 &= z_{\spnode, \ileft} = \min\{z_{\spnode, \ileft}, 1\} = \min\{z_{\spnode, \ileft[\spnode_1]}, (1- z_{\spnode, 0'})\} = \mu^{\spnode_1}_2, \\
	\mu^{\spnode}_1 + \mu^{\spnode}_2 &= 1 - z_{\spnode, \iright} = [(1 - z_{\spnode, \iright}) - 0]^+ = [(1 - z_{\spnode_2, \iright[\spnode_2]}) - z_{\spnode_2, 0}]^+ = \mu^{\spnode_2}_1, \\
	\mu^{\spnode}_3 + \mu^{\spnode}_4 &= 1 - (1 - z_{\spnode, \iright})
	= 1 - \max\{ z_{\spnode_2, 0}, 1 - z_{\spnode, \iright} \} = \mu^{\spnode_2}_3
\end{align*}
proving that the respective $\mu^{\spnode}_j$ values are consistent.


\subparagraph{Acknowledgements} The authors thank the anonymous referees of the extended abstract~\cite{MSW-IPCO25} for their insightful comments, which helped improve the presentation of this paper. The second author is especially grateful to Chandra Chekuri and Bruce Shepherd for valuable discussions on cut conditions for multiflows during the 2024 Oberwolfach Workshop on Combinatorial Optimization, which motivated the result presented in \Cref{thm:strengthened-cut-condition}.

\bibliographystyle{plain} 
\bibliography{submission}

\appendix

\section{Refinements of convex combinations}\label{app:refinements}

Our construction of the convex decompositions used in \cref{thm:2dmax,thm:main} relies on a recursive approach. We first construct convex combinations of the flows on single arcs (i.e. for leaf vertices of the $sp$-tree) and combine them recursively in the $s$- and $p$-nodes of the $sp$-tree. In this subsection, we describe the formal process that we refer to as \emph{refinement of convex combinations} used to combine convex combinations in an intuitive and straightforward way.

In this section, we assume we are given two vectors $\vec{v} \in \R^n$ and $\vec{w} \in \R^m$ that are convex combinations of vectors $\vec{v}_\ell \in \R^n, \ell=1, \dotsc, L_1$ and $\vec{w}_\ell \in \R^m, \ell=1, \dotsc, L_2$, i.e., there are coefficients $\lambda_\ell \in [0,1], \ell=1, \dotsc, L_1$ and $\mu_\ell \in [0,1], \ell=1, \dotsc, L_2$ such that
\[
	\vec{v} = \sum_{\ell=1}^{L_1} \lambda_\ell \vec{v}_\ell
	\qquad\text{and}\qquad
	\vec{w} = \sum_{\ell=1}^{L_2} \mu_\ell \vec{w}_\ell
\]
as well as $\sum_{\ell=1}^{L_1} \lambda_\ell = 1$ and $\sum_{\ell=1}^{L_2} \mu_\ell = 1$.
We are interested in finding a convex decomposition of the vector $\vec{u} = [\vec{v}^{\top}, \vec{w}^{\top}]^{\top} \in \R^{n+m}$, i.e., we want to find vectors $\vec{u}_\ell \in \R^{n+m}$ and coefficients $\rho_\ell \in [0,1]$ such that
\[
	\begin{bmatrix} \vec{v} \\ \vec{w} \end{bmatrix}
	=
	\sum_{\ell = 1}^{L_3} \rho_{\ell} \vec{u}_{\ell}
\]
and $\sum_{\ell=1}^{L_3} \rho_{\ell} = 1$. We use the following procedure in order to construct the convex decomposition.

\begin{algorithm}[H]
	\SetKwInOut{Input}{input}
	\SetKwInOut{Output}{output}
	\Input{convex decompositions $\vec{v} = \sum_{\ell=1}^{L_1} \lambda_\ell \vec{v}_\ell$ and $\vec{w} = \sum_{\ell=1}^{L_2} \mu_\ell \vec{w}_\ell$}
	\Output{convex decomposition $\begin{bmatrix} \vec{v} \\ \vec{w} \end{bmatrix} = \sum_{\ell = 1}^{L_3} \rho_{\ell} \vec{u}_{\ell}$}
	Initialize $i \leftarrow 1$, $j \leftarrow 1$, $\ell \leftarrow 1$\;
	\While{$i \leq L_1$ and $j \leq L_2$}{
		Set $\vec{u}_\ell \leftarrow \begin{bmatrix} \vec{v}_i \\ \vec{w}_j \end{bmatrix}$\quad\text{and}\quad$\rho_\ell = \min \{\lambda_i, \mu_j\}$\;
		Set $\lambda_i \leftarrow \lambda_i - \rho_\ell$\quad{}and\quad$\mu_j \leftarrow \mu_j - \rho_{\ell}$\;
		If $\lambda_i = 0$ set $i \leftarrow i + 1$;\quad{}if $\mu_j = 0$ set $j \leftarrow j + 1$\;
		$\ell \leftarrow \ell + 1$\;
	}

	\caption{Refinement of two convex decompositions.}
	\label{alg:convex:refinement}
\end{algorithm}

\cref{fig:convex:refinement} shows the output of \cref{alg:convex:refinement} for a specific example. In the following lemma, we prove the correctness of the algorithm and show that its runtime as well as the output size is linear in the input.

\begin{lemma}
	\cref{alg:convex:refinement} is correct. Furthermore, it has a time complexity of $\mathcal{O}(L_1 + L_2)$ and the convex decomposition constructed by the algorithm consists of $L_3 \leq L_1 + L_2 - 1$ many vectors.
\end{lemma}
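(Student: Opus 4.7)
The plan is to prove three things in sequence: termination with the claimed iteration bound, that the output coefficients form a valid convex combination, and that the resulting linear combination reproduces $[\vec{v}^{\top},\vec{w}^{\top}]^{\top}$.

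The backbone of the argument is a single invariant maintained across the while loop: at the beginning of each iteration, the sum of the current (residual) values $\lambda_i, \dotsc, \lambda_{L_1}$ equals the sum of $\mu_j, \dotsc, \mu_{L_2}$. This holds initially since both equal $1$, and is preserved because each iteration subtracts the same scalar $\rho_\ell = \min\{\lambda_i,\mu_j\}$ from both $\lambda_i$ and $\mu_j$. I expect this to be the only genuinely delicate step, since everything else is bookkeeping; in particular, the invariant forces $i > L_1$ and $j > L_2$ to become true simultaneously, so the while-loop predicate terminates in a consistent state. The invariant also forces the final iteration to have $\lambda_i = \mu_j$, so both indices advance in that step; combined with the fact that every iteration advances at least one of $i,j$, the quantity $i+j$ increases from $2$ to $L_1+L_2+2$ with total jumps of size $L_1+L_2$ and at least one jump of size $2$, bounding the number of iterations by $L_1+L_2-1$. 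Each iteration performs $\mathcal{O}(1)$ work, giving the claimed runtime and output size.

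For correctness of the convex combination I would use a telescoping argument. For each fixed $i$, the sum $\sum_{\ell:i_\ell=i}\rho_\ell$ equals the original $\lambda_i$, because this sum is exactly what is successively subtracted from the residual $\lambda_i$ before the algorithm moves on to index $i+1$. Summing over $i$ yields $\sum_\ell \rho_\ell = \sum_i \lambda_i = 1$, and projecting $\sum_\ell \rho_\ell \vec{u}_\ell$ onto its first $n$ coordinates gives $\sum_i \lambda_i \vec{v}_i = \vec{v}$; the analogous identity $\sum_{\ell:j_\ell=j}\rho_\ell = \mu_j$ handles the bottom $m$ coordinates. Non-negativity of each $\rho_\ell$ is immediate from the minimum operation together with the inductively maintained $\lambda_i,\mu_j \geq 0$. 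No step is really hard; the main obstacle is simply stating the residual-sum invariant cleanly enough that the simultaneous termination of the two indices, and hence the $L_1+L_2-1$ bound, falls out without further case analysis.
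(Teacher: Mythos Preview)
Your proposal is correct and follows essentially the same approach as the paper: both use the invariant that the residual $\lambda$-mass equals the residual $\mu$-mass (equivalently, that equal amounts have been subtracted from each side), deduce simultaneous termination of the two indices, and then use the per-index telescoping identity $\sum_{\ell:i_\ell=i}\rho_\ell=\lambda_i$ to recover $\vec v$ and $\vec w$. Your treatment of the $L_1+L_2-1$ bound is in fact slightly more explicit than the paper's, since you spell out that the final iteration advances both indices.
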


\begin{proof}
	First, we observe that in every iteration of \cref{alg:convex:refinement} $i$ or $j$ (or both) is increased by exactly~$1$. Therefore, the algorithm terminates after at most $L_1 + L_2 - 1$ iterations. This is also the number of vectors in the output.

	Further, it es easy to see that the following invariant holds true.
	At the end of every iteration, the total amount of by how much the coefficients $\lambda_i$ have been reduced is the same as how much the coefficients $\mu_j$ have been reduced and, additionally, this is exactly the sum of all $\rho_{\ell}$ defined so far.
	Since both the $\lambda_i$s and the $\mu_j$s sum to exactly~$1$, this implies that in the very last step both $\lambda_i$ and $\mu_j$ are reduced to~$0$ and, in particular, at the end all $\lambda_i$ and $\mu_j$ are equal to $0$ (i.e., all mass of both convex combination was distributed).
	Also, by construction, for all $\ell$ for which $\vec{u}_\ell$ contains some fixed vector $\vec{v}_i$, the corresponding coefficients~$\rho_{\ell}$ sum exactly to~$\lambda_i$. The same is true for vectors $\vec{w}_j$ and coefficients~$\mu_j$. This guarantees that the output is indeed a convex decomposition.
\end{proof}

\def\convboxwidth{0.77*\linewidth/1cm}
\def\convboxheight{1.5em}
\begin{figure}[t]

	\newcommand{\combvector}[2]{
		\begin{bmatrix}
			\vec{v}_{#1} \\ \vec{w}_{#2}
		\end{bmatrix}
	}

	\newcommand{\drawconvexcomb}[1][\convboxheight]{
		\fill[\col!80, draw=black, thick] (\start, -#1/2) rectangle ++(\lam, #1);
	}
	\newcommand{\convexcomblabels}[1][\convboxheight]{
		\draw[thick] (\start, {#1 /2}) -- ++(0, - #1);
		\draw[decorate, decoration={brace, mirror}, \col] (\start, -#1) -- ++(\lam, 0) node[midway, below=.5ex] {\scriptsize$\lbl$};
	}
	\newcommand{\vectorlabels}[1][0]{
		\node[\col] at ({\start+\lam/2}, #1) {\color{white}\footnotesize$\veclbl$};
	}
	\newcommand{\vectorlabelsabove}[1][0]{
		\node[\col, above] at ({\start+\lam/2}, #1) {\footnotesize$\veclbl$};
	}

	\def\firstcc{%
		{0}/{1/4}/{\lambda_1 = \frac{1}{4}}/{\vec{v}_1}/color1,%
		{1/4}/{1/2}/{\lambda_2 = \frac{1}{2}}/{\vec{v}_2}/color2,%
		{3/4}/{1/4}/{\lambda_3 = \frac{1}{4}}/{\vec{v}_3}/color3%
	}

	\def\secondcc{%
		{0}/{1/6}/{\mu_1 = \frac{1}{6}}/{\vec{w}_1}/color4,%
		{1/6}/{1/3}/{\mu_2 = \frac{1}{3}}/{\vec{w}_2}/color5,%
		{1/2}/{1/3}/{\mu_3 = \frac{1}{3}}/{\vec{w}_3}/color6,%
		{5/6}/{1/6}/{\mu_4 = \frac{1}{6}}/{\vec{w}_4}/color7%
	}

	\begin{center}
		\begin{tikzpicture}
			\begin{scope}[xscale=\convboxwidth]
				\node[anchor=west] at (-.02, 1) {\small\textbf{Input:}};

				\foreach \start/\lam/\lbl/\veclbl/\col in \firstcc {
					\drawconvexcomb
					\convexcomblabels
					\vectorlabels
				}
				\draw[very thick] (0,-\convboxheight/2) -- (0, \convboxheight/2) node[above] {\footnotesize $0$};
				\draw[very thick] (1,-\convboxheight/2) -- (1, \convboxheight/2) node[above] {\footnotesize $1$};
				\node[right=.5em] at (1,0) {$\displaystyle \vec{v} = \sum_{i=1}^3 \lambda_i \vec{v}_i$};
			\end{scope}

			\begin{scope}[xscale=\convboxwidth,shift={(0,-2)}]
				\foreach \start/\lam/\lbl/\veclbl/\col in \secondcc {
					\drawconvexcomb
					\convexcomblabels
					\vectorlabels
				}
				\draw[very thick] (0,-\convboxheight/2) -- (0, \convboxheight/2) node[above] {\footnotesize $0$};
				\draw[very thick] (1,-\convboxheight/2) -- (1, \convboxheight/2) node[above] {\footnotesize $1$};
				\node[right=.5em] at (1,0) {$\displaystyle \vec{w} = \sum_{i=1}^4 \lambda_i \vec{w}_i$};
			\end{scope}

			\begin{scope}[xscale=\convboxwidth,shift={(0,-5.666)}]
				\node[anchor=west] at (-.02, 1.8) {\small\textbf{Output:}};

				\begin{scope}[shift={(0,\convboxheight/4)}]
				\foreach \start/\lam/\lbl/\veclbl/\col in \firstcc {
					\drawconvexcomb[\convboxheight/2]
				}
				\end{scope}
				\begin{scope}[shift={(0,-\convboxheight/4)}]
				\foreach \start/\lam/\lbl/\veclbl/\col in \secondcc {
					\drawconvexcomb[\convboxheight/2]
				}
				\end{scope}

				\foreach \start/\lam/\lbl/\veclbl/\col in {%
					{0}/{1/6}/{\rho_1 = \frac{1}{6}}/{\vec{u}_1 {=}\! \combvector{1}{1}}/black,%
					{1/6}/{1/12}/{\rho_2 = \frac{1}{12}}/{~~\vec{u}_2 {=}\! \combvector{1}{2}}/black,%
					{1/4}/{1/4}/{\rho_3 = \frac{1}{4}}/{\vec{u}_3 {=}\! \combvector{2}{2}}/black,%
					{1/2}/{1/4}/{\rho_4 = \frac{1}{4}}/{\vec{u}_4 {=}\! \combvector{2}{3}}/black,%
					{3/4}/{1/12}/{\rho_5 = \frac{1}{12}}/{\vec{u}_5 {=}\! \combvector{3}{3}}/black,%
					{5/6}/{1/6}/{\rho_6 = \frac{1}{6}}/{\vec{u}_6 {=}\! \combvector{3}{4}}/black%
				} {
					\convexcomblabels[3.5ex]
					\vectorlabelsabove[2ex]
				}

				\draw[very thick] (0,-2.5ex) -- (0, 2.5ex) node[above] {\footnotesize $0$};
				\draw[very thick] (1,-2.5ex) -- (1, 2.5ex) node[above] {\footnotesize $1$};

				\node[right=.5em] at (1,0) {$\displaystyle \begin{bmatrix} \vec{v} \\ \vec{w} \end{bmatrix}\! = \sum_{i=1}^6 \rho_i \vec{u}_i$};
			\end{scope}

		\end{tikzpicture}
	\end{center}

	\caption{The refinement of two convex decompositions. Above are the input convex decompositions. The colors represent the different vectors and the lengths of the line segments represent the convex coefficients. Below the output of \cref{alg:convex:refinement} is shown for this input. Every line segment consists of two colors representing a combination of a vector $\vec{v}_i$ and vector $\vec{w}_j$.}
	\label{fig:convex:refinement}
\end{figure}

\begin{algorithm}[t]
	\SetKwInOut{Input}{input}
	\SetKwInOut{Output}{output}
	\Input{Linear combinations $\displaystyle \vec{v} = \sum_{\ell=1}^{L_1} \lambda_\ell \vec{v}_\ell$ and $\displaystyle \vec{w} = \sum_{\ell=1}^{L_2} \mu_\ell \vec{w}_\ell$ \\ with $\lambda_\ell, \mu_\ell \geq 0$ and $\bar{\lambda} \coloneqq \sum_{\ell=1}^{L_1} \lambda_{\ell}$ and $\bar{\mu} \coloneqq \sum_{\ell=1}^{L_2} \mu_{\ell}$;\vspace{.5ex}\\
	a positive number $\bar{\rho} > 0$.} \vspace{.5ex}
	\Output{linear combination $\displaystyle \begin{bmatrix} \frac{\bar{\rho}}{\bar{\lambda}} \vec{v} \\[1ex] \frac{\bar{\rho}}{\bar{\mu}} \vec{w} \end{bmatrix} = \sum_{\ell = 1}^{L_3} \rho_{\ell} \vec{u}_{\ell}$ with $\displaystyle\sum_{\ell = 1}^{L_3} \rho_\ell = \bar{\rho}$}
	Set $\tilde{\lambda}_\ell = \lambda_\ell / \bar{\lambda}, \ell = 1, \dotsc, L_1$ and $\tilde{\mu}_\ell = \mu_\ell / \bar{\mu}, \ell = 1, \dotsc, L_2$\;
	Run \cref{alg:convex:refinement} with normalized coefficients $\tilde{\lambda}_\ell$ and $\tilde{\mu}_\ell$ with output vectors $\vec{u}_\ell$ and coefficients $\tilde{\rho}_{\ell}$\;
	Set $\rho_{\ell} = \bar{\rho} \, \tilde{\rho}_{\ell}$\;
	\caption{Refinement of two non-negative linear combinations.}
	\label{alg:linear:combination:refinement}
\end{algorithm}

We note that \cref{alg:convex:refinement} can be easily generalized to arbitrary positive linear combinations, i.e. settings where the coefficients $\lambda_i, \mu_j$ and~$\rho_{\ell}$ do not sum up to~$1$. In this case, the coefficients $\lambda_i$ and~$\mu_j$ can be rescaled before executing the algorithm. Rescaling the coefficients~$\rho_\ell$ at the end allows for coefficients that sum to an arbitrary positive value. This procedure is summarized in~\cref{alg:linear:combination:refinement}.

\end{document}